\theoremstyle{plain}
\newtheorem{theorem}{Theorem}[section]
\newtheorem{lemma}{Lemma}[section]
\newtheorem{proposition}{Proposition}[section]
\newtheorem{definition}{Definition}[section]
\newtheorem{example}{Example}[section]
\newtheorem{remark}{Remark}[section]
\numberwithin{equation}{section}
\numberwithin{definition}{section}
\newcommand{\JM}{Mierczy\'nski}
\newcommand{\RR}{\ensuremath{\mathbb{R}}}
\newcommand{\ZZ}{\ensuremath{\mathbb{Z}}}
\newcommand{\PP}{\ensuremath{\mathbb{P}}}
\newcommand{\TT}{\ensuremath{\mathbb{T}}}
\newcommand{\OFP}{\ensuremath{(\Omega,\mathfrak{F},\PP)}}
\newcommand{\abs}[1]{\ensuremath{\lvert#1\rvert}}
\newcommand{\intpart}[1]{\ensuremath{\lfloor#1\rfloor}}
\newcommand{\norm}[1]{\ensuremath{\lVert#1\rVert}}
\newcommand{\normone}[1]{\ensuremath{\lVert#1\rVert_{1}}}
\newcommand{\mlsps}{measurable linear skew\nobreakdash-\hspace{0pt}product
semidynamical system}
\DeclareMathOperator{\cl}{cl}
\DeclareMathOperator{\Image}{Im}
\DeclareMathOperator{\lnplus}{ln^{+}}
\DeclareMathOperator{\spanned}{span}
\begin{document}
\begin{frontmatter}

\title{Principal Lyapunov Exponents and Principal Floquet Spaces of
Positive Random Dynamical Systems. II. Finite-dimensional Systems}

\author[wroclaw]{Janusz Mierczy\'nski\corref{cor1}\fnref{fn1}}
\ead{mierczyn@pwr.wroc.pl}

\author[auburn]{Wenxian Shen\fnref{fn2}} \ead{wenxish@auburn.edu}

\cortext[cor1]{Corresponding author}

\fntext[fn1]{Supported from resources for science in years 2009-2012
as research project (grant MENII N N201 394537, Poland)}

\fntext[fn2]{Partially supported by NSF grant DMS-0907752}

\address[wroclaw]{Institute of Mathematics and Computer Science,
Wroc{\l}aw University of Technology, Wybrze\.ze Wyspia\'nskiego 27,
PL-50-370 Wroc{\l}aw, Poland}

\address[auburn]{Department of Mathematics and Statistics, Auburn
University, Auburn University, AL 36849, USA}

\begin{abstract}
This is the second part in a series of papers concerned with
principal Lyapunov exponents and principal Floquet subspaces of
positive random dynamical systems in ordered Banach spaces.  The
current part focuses on applications of general theory, developed in
the authors' paper \textit{Principal Lyapunov exponents and principal
Floquet spaces of positive random dynamical systems. I. General
theory}, Trans.\ Amer.\ Math.\ Soc., in~press, to positive random
dynamical systems on finite\nobreakdash-\hspace{0pt}dimensional
ordered Banach spaces.  It is shown under some quite general
assumptions that measurable linear
skew\nobreakdash-\hspace{0pt}product semidynamical systems generated
by measurable families of positive matrices and by strongly
cooperative or type-$K$ strongly monotone systems of linear ordinary
differential equations admit measurable families of generalized
principal Floquet subspaces, generalized principal Lyapunov
exponents, and generalized exponential separations.
\end{abstract}

\begin{keyword}
Random dynamical system \sep skew-product linear semidynamical system
\sep principal Lyapunov exponent \sep principal Floquet subspace \sep
entire positive orbit \sep random Leslie matrix model \sep
cooperative system of ordinary differential equations \sep type-$K$
monotone system of ordinary differential equations

\medskip

\MSC[2010] Primary 34C12 \sep 37C65 \sep 34D08 \sep 37H15; Secondary
15B52 \sep 34F05
\end{keyword}
\end{frontmatter}

\section{Introduction}
\label{section-introduction}

This is the second part of a series of several papers. The series is
devoted to the study of  principal Lyapunov exponents and principal
Floquet subspaces of positive random dynamical systems in  ordered
Banach spaces.

Lyapunov exponents play an important role in the study of asymptotic
dynamics of linear and nonlinear random evolution systems.  Oseledets
obtained in~\cite{Ose} important results on Lyapunov exponents and
measurable invariant families of subspaces for
finite\nobreakdash-\hspace{0pt}dimensional dynamical systems, which
are called now the Oseledets multiplicative ergodic theorem.  Since
then a huge amount of research has been carried out  toward
alternative proofs of the Oseledets multiplicative ergodic theorem
(see \cite{Arn}, \cite{JoPaSe}, \cite{Kre}, \cite{Man}, \cite{Mil},
\cite{Rag}, \cite{Rue1} and the references contained therein) and
extensions of the Osedelets multiplicative theorem for finite
dimensional systems to certain infinite dimensional ones (see
\cite{Arn}, \cite{JoPaSe}, \cite{Kre}, \cite{Lian-Lu},  \cite{Man},
\cite{Mil}, \cite{Rag}, \cite{Rue1}, \cite{Rue2}, \cite{ScFl}, and
references therein).

The largest finite Lyapunov exponents (or top Lyapunov exponents) and
the associated invariant subspaces of both deterministic and random
dynamical systems play special roles in the applications to nonlinear
systems. Classically, the top finite Lyapunov exponent of a positive
deterministic or random dynamical system in an ordered Banach space
is called the {\em principal Lyapunov exponent\/} if the associated
invariant family of subspaces corresponding to it consists of
one\nobreakdash-\hspace{0pt}dimensional subspaces spanned by a
positive vector (in such case, invariant subspaces are called the
{\em principal Floquet subspaces\/}).  For more on those subjects
see~\cite{MiShPart1}.

In the first part of the series, \cite{MiShPart1}, we introduced the
notions of generalized principal Floquet subspaces, generalized
principal Lyapunov exponents, and generalized exponential
separations, which extend the corresponding classical notions. The
classical theory of principal Lyapunov exponents, principal Floquet
subspaces, and exponential separations for strongly positive and
compact deterministic systems is extended to quite general positive
random dynamical systems in ordered Banach spaces.

In the present, second part of the series, we consider applications
of the general theory developed in \cite{MiShPart1} to positive
random dynamical systems arising from a variety of random mappings
and ordinary differential equations.  To be more specific, let
$(\OFP,\theta_t)$ be an ergodic metric dynamical system.  We
investigate positive random matrix models of the form
$((U_{\omega}(n))_{\omega \in \Omega, n \in \ZZ^+}, (\theta_{n})_{n
\in \ZZ})$ (including random Leslie matrix models) (see
Section~\ref{section-matrix}), where
\begin{equation}
\label{matrix-eq0}
U_{\omega}(1) u =
\left(\begin{matrix}
s_{11}(\omega)&s_{12}(\omega)&\cdots&s_{1N}(\omega) \\
s_{21}(\omega)&s_{22}(\omega)&\cdots&s_{2N}(\omega) \\
\vdots & \vdots & \ddots& \vdots \\
s_{N1}(\omega)&s_{N2}(\omega)&\cdots&s_{NN}(\omega)
\end{matrix}\right)u, \quad u \in \RR^N,
\end{equation}
$s_{ij}(\omega) \ge 0$ for $i, j = 1, 2, \dots, N$ and $\omega \in
\Omega$; random cooperative systems of ordinary differential
equations of the form (see Subsection~\ref{subsection-cooperative})
\begin{equation}
\label{cooperative-eq0}
\dot{u}(t) = A(\theta_t\omega)u(t), \qquad \omega \in \Omega, \ t \in
\RR, \ u \in \RR^{N},
\end{equation}
where
\begin{equation*}
A(\omega) =
\left(
\begin{matrix}
a_{11}(\omega)&a_{12}(\omega)&\cdots&a_{1N}(\omega) \\
a_{21}(\omega)&a_{22}(\omega)&\cdots&a_{2N}(\omega) \\
\vdots & \vdots & \ddots & \vdots \\
a_{N1}(\omega)&a_{N2}(\omega)&\cdots&a_{NN}(\omega)
\end{matrix}\right),
\end{equation*}
and $a_{ij}(\omega) \ge 0$  for $i \ne j$, $i, j = 1, 2, \dots, N$
and $\omega \in \Omega$; and random type-$K$ monotone systems of
ordinary differential equations (see
Subsection~\ref{subsection-competitive})
\begin{equation}
\label{k-competitive-eq0}
\dot{u}(t) = B(\theta_t\omega)u(t), \qquad \omega \in \Omega, \ t \in
\RR, \ u \in \RR^{N},
\end{equation}
where for each $\omega\in\Omega$,
\begin{equation*}
B(\omega) =
\left(
\begin{matrix}
b_{11}(\omega)&b_{12}(\omega)&\cdots&b_{1N}(\omega) \\
b_{21}(\omega)&b_{22}(\omega)&\cdots&b_{2N}(\omega) \\
\vdots & \vdots & \ddots & \vdots \\
b_{N1}(\omega)&b_{N2}(\omega)&\cdots&b_{NN}(\omega)
\end{matrix}\right),
\end{equation*}
and there are $1 \le k, l \le N$ such that $k + l = N$,
$b_{ij}(\omega) \ge 0$ for $i\not = j$ and $i, j \in \{1,2,\dots,k\}$
or $i, j \in \{k+1,k+2, \dots,k+l\}$, and $b_{ij}(\omega) \le 0$ for
$i \in \{1, \dots, k\}$ and $j \in \{k+1, \dots, k+l\}$ or $i \in
\{k+1, \dots, k+l\}$ and $j \in \{1,\cdots,k\}$.

We remark that, biologically, \eqref{matrix-eq0} describes
discrete-time age-structured population models,
\eqref{cooperative-eq0} models a system of $N$ species which is
cooperative, and \eqref{k-competitive-eq0} models a community of $N$
species which can be divided into two subcommunities, one
subcommunity consisting of $k$ species and the other consisting of
$l$ species, such that the interactions between every pair of species
in either subcommunity are cooperative, whereas the interactions
between the species belonging to different subcommunities are
competitive. The study of \eqref{matrix-eq0},
\eqref{cooperative-eq0}, and \eqref{k-competitive-eq0} will provide
some basic tool for the study of random discrete-time age-structured
nonlinear population models and random cooperative or type-$K$
monotone systems of nonlinear  ordinary differential equations. The
reader is referred to \cite{BS}, \cite{GyWa}, \cite{Hir},
\cite{HirSmi}, \cite{KoIw}, \cite{LiJi}, \cite{Smi1}, \cite{Smi2},
\cite{Zha}, and references therein for the study of discrete-time
age-structured population models and time periodic cooperative and
type-$K$ monotone systems of nonlinear ordinary differential
equations.

Under quite general conditions, \eqref{matrix-eq0},
\eqref{cooperative-eq0}, generate measurable linear skew-product
semidynamical systems on $\Omega \times \RR^{N}$, preserving the
natural ordering on $\RR^N$ (i.e., the order generated by the cone
$(\RR^N)^+ = \{\, (u = (u_1,\dots,u_N)^{\top} : u_i \ge 0, i =
1,\dots, N\,\}$), and \eqref{k-competitive-eq0} generates a
measurable linear skew\nobreakdash-\hspace{0pt}product semidynamical
system on $\Omega \times \RR^{N}$, preserving the type-$K$ ordering
on $\RR^N$ generated by $(\RR^k)^{+} \times (\RR^l)^{-} = \{\, u =
(u_1,\dots,u_N)^{\top} : u_i \ge 0$ for $i = 1,\dots,k$ and $u_i \le
0$ for $i = k+1,\dots,k+l(= N)\,\}$.

Observe that by the following variable change:
\begin{equation*}
u_i \mapsto u_i \text{ for } i = 1,\dots, k \text{ and } u_i \mapsto
-u_i \text{ for } i = k+1, \dots, k+l (=N),
\end{equation*}
the random type-$K$ monotone system \eqref{k-competitive-eq0} becomes
a random cooperative system of form \eqref{cooperative-eq0} (see
Subsection \ref{subsection-competitive} for detail). We will
therefore focus on the study of \eqref{matrix-eq0} and
\eqref{cooperative-eq0}. Applying the general theory developed in
Part I (\cite{MiShPart1}), we obtain the following results.
\begin{itemize}
\item[(1)]
Under some general positivity assumptions, \eqref{matrix-eq0},
\eqref{cooperative-eq0}, have nontrivial entire positive orbits
(see Theorems \ref{matrix-thm}(1), \ref{cooperative-thm}(1) for
detail);
\item[(2)]
Assume the general positivity and some focusing property.
\eqref{matrix-eq0}, \eqref{cooperative-eq0} have measurable
invariant families of one\nobreakdash-\hspace{0pt}dimensional
subspaces $\{\tilde E_1(\omega)\}$ spanned by positive vectors
({\em generalized principal Floquet subspaces}) and whose
associated Lyapunov exponent is the top Lyapunov exponent of the
system ({\em generalized principal Lyapunov exponent}) (see
Theorems \ref{matrix-thm}(2), \ref{cooperative-thm}(2) for
detail);
\item[(3)]
Assume the general positivity and some strong focusing property.
\eqref{matrix-eq0}, \eqref{cooperative-eq0} have also measurable
invariant families of one\nobreakdash-\hspace{0pt}codimensional
subspaces which are exponentially separated from the generalized
principal Floquet subspaces (see Theorems \ref{matrix-thm}(3),
\ref{cooperative-thm}(3) for detail);
\item[(4)]
Assume the general positivity with some strong positivity in one
direction and some strong focusing property.  Then the
generalized principal Lyapunov exponent for \eqref{matrix-eq0},
\eqref{cooperative-eq0} is finite, so those equations admit
principal Floquet subspaces, principal Lyapunov exponents, and
exponential separation in the classical sense (see Theorems
\ref{matrix-thm}(4), \ref{cooperative-thm}(4) and for detail).
\end{itemize}

The results (1)--(3) are new.  The result (4) recovers many existing
results on principal Floquet subspaces and principal Lyapunov
exponents for \eqref{matrix-eq0}, \eqref{cooperative-eq0} known in
the literature (see \cite{Arn}, \cite{AGD}, \cite{MiSh3}, etc.)

We remark that the generalized principal Lyapunov exponents in (2)
may be $-\infty$.  In such a case, when generalized exponential
separation holds, the (nontrivial) invariant measurable decomposition
associated with the generalized exponential separation is essentially
finer than the (trivial) decomposition in the Oseledets
multiplicative ergodic theorem (see
Subsection~\ref{subsection:example}).

The results obtained in this paper have important biological
implications. For example, the result (3) implies that the population
densities of all species in a random cooperative system of $N$
species increase or decrease at the same rate which equals the
generalized principal eigenvalue of the system along the same
direction which is the direction of the generalized principal Floquet
subspace (see Remarks \ref{grow-along-principal-direction-rk1} and
\ref{grow-along-principal-direction-rk2}).

\smallskip
The rest of this paper is organized as follows. First, for the
reader's convenience, in Section~\ref{general-theory} we put the
notions, assumptions, definitions, and main results of Part I
(\cite{MiShPart1}) in the context of
finite\nobreakdash-\hspace{0pt}dimensional systems.  We then consider
random systems arising from random families of matrices and
cooperative and type-$K$ monotone systems of ordinary differential
equations in Sections~\ref{section-matrix} and
\ref{section-cooperative}, respectively.

\section{General Theory}
\label{general-theory}

\subsection{Notions, Assumptions, and Definitions}
In this subsection, we introduce the notions, assumptions, and
definitions introduced in Part I. The reader is referred to Part I
(\cite{MiShPart1}) for detail.

First, we introduce some notions.

For a metric space $Y$, $\mathfrak{B}(Y)$ stands for the
$\sigma$\nobreakdash-\hspace{0pt}algebra of all Borel subsets of $Y$.
A pair $(\Omega, \mathfrak{F})$ is called a {\em measurable space\/}
if $\Omega$ is a set and $\mathfrak{F}$ is a
$\sigma$\nobreakdash-\hspace{0pt}algebra of its subsets.

By a {\em probability space\/} we understand a triple $\OFP$, where
$(\Omega, \mathfrak{F})$ is a measurable space and $\PP$ is a measure
defined for all $F \in \mathfrak{F}$, with $\PP(\Omega) = 1$ we call
$(\Omega, \mathfrak{F}, \mu)$ a {\em probability space}.

All Banach spaces considered in the paper are real.  $X$ will stand
for a finite\nobreakdash-\hspace{0pt}dimensional Banach space $X$,
with norm $\norm{\cdot}$.

By $X^{*}$ we will denote the dual of $X$, and by $\langle \cdot,
\cdot \rangle$ we denote the standard duality pairing (that is, for
$u \in X$ and $u^{*} \in X^{*}$ the symbol $\langle u, u^{*} \rangle$
denotes the value of the bounded linear functional $u^{*}$ at $u$).
Without further mention, we understand that the norm in $X^{*}$ is
given by $\norm{u^{*}} = \sup\{\,\abs{\langle u, u^{*} \rangle}:
\norm{u} \le 1 \,\}$.

\smallskip
$\TT$ stands for either $\ZZ$ or $\RR$.

For a metric dynamical system $(\OFP,(\theta_t)_{t\in\TT})$, $\Omega'
\subset \Omega$ is {\em invariant\/} if $\theta_{t}(\Omega') =
\Omega'$ for all $t \in \TT$. $(\OFP,(\theta_t)_{t\in\TT})$ is said
to be {\em ergodic\/} if for any invariant $F \in \mathfrak{F}$,
either $\PP(F) = 1$ or $\PP(F) = 0$.

From now on, $(\OFP, \allowbreak (\theta_t)_{t\in\TT})$  (we may
simply write it as $(\theta_{t})_{t \in \TT}$) denotes an ergodic
metric dynamical system.

For $\TT = \RR$ we write $\TT^{+}$ for $[0, \infty)$.  For $\TT =
\ZZ$ we write $\TT^{+}$ for $\{0, 1, 2, 3, \dots\}$. By a {\em
measurable linear skew\nobreakdash-\hspace{0pt}product semidynamical
system\/} \allowbreak $\Phi = \allowbreak ((U_\omega(t))_{\omega \in
\Omega, t \in \TT^{+}}, \allowbreak (\theta_t)_{t\in\TT})$ on  $X$
covering a metric dynamical system $(\theta_{t})_{t \in \TT}$ we
understand a $(\mathfrak{B}(\TT^{+}) \otimes \mathfrak{F} \otimes
\mathfrak{B}(X), \mathfrak{B}(X))$\nobreakdash-\hspace{0pt}measurable
mapping
\begin{equation*}
[\, \TT^{+} \times \Omega \times X \ni (t,\omega,u) \mapsto
U_{\omega}(t)u \in X \,]
\end{equation*} satisfying the following:
\begin{itemize}
\item
\begin{equation}
\label{eq-identity}
U_{\omega}(0) = \mathrm{Id}_{X} \quad \forall \omega \in \Omega,
\end{equation}
\begin{equation}
\label{eq-cocycle}
U_{\theta_{s}\omega}(t) \circ U_{\omega}(s) = U_{\omega}(t+s)
\qquad \forall \omega \in \Omega, \ t, s \in \TT^{+};
\end{equation}
\item
for each $\omega \in \Omega$ and $t \in \TT^{+}$, $[\, X \ni u
\mapsto U_{\omega}(t)u \in X \,] \in \mathcal{L}(X)$.
\end{itemize}
To avoid using lower indices, we usually write $\Phi = (U_\omega(t),
\theta_t)$.

When $\TT^{+} = [0,\infty)$ we call a \mlsps\ a (measurable linear
skew\nobreakdash-\hspace{0pt}product) {\em semiflow\/}.  To emphasize
the situation when $\TT^{+} = \{0, 1, 2, \dots \}$, we speak of
(measurable linear skew\nobreakdash-\hspace{0pt}product) {\em
discrete\nobreakdash-\hspace{0pt}time\/} semidynamical system.

Let $\Phi = ((U_\omega(t))_{\omega \in \Omega, t \in \TT^{+}},
\allowbreak (\theta_t)_{t\in\TT})$ be a measurable linear
skew\nobreakdash-\hspace{0pt}product semidynamical system on $X$
covering $(\theta_{t})_{t \in \TT}$.  For $\omega \in \Omega$, $t \in
\TT^{+}$ and $u^{*} \in X^*$ we define $U^{*}_{\omega}(t)u^{*}$ by
\begin{equation}
\label{dual-definition}
\langle u, U^{*}_{\omega}(t)u^{*} \rangle = \langle
U_{\theta_{-t}\omega}(t)u , u^{*} \rangle \qquad \text{for each } u
\in X
\end{equation}
(in other words, $U^{*}_{\omega}(t)$ is the mapping dual to
$U_{\theta_{-t}\omega}(t)$).  The mapping
\begin{equation*}
[\, \TT^{+} \times \Omega \times X^{*} \ni (t,\omega,u^{*})
\mapsto U^{*}_{\omega}(t)u^{*} \in X^{*} \,]
\end{equation*}
is $(\mathfrak{B}(\TT^{+}) \otimes \mathfrak{F} \otimes
\mathfrak{B}(X^{*}),
\mathfrak{B}(X^{*}))$\nobreakdash-\hspace{0pt}measurable. We call the
\mlsps\ $\Phi^{*} = \allowbreak (U^{*}_\omega(t), \allowbreak
\theta_{-t})$ on $X^{*}$ covering $\theta_{-t}$ the {\em dual\/} of
$\Phi$.

\smallskip
By a {\em cone\/} in $X$ we understand a closed convex set $X^{+}$
such that
\begin{description}
\item{(C1)}
$\alpha \ge 0$ and $u \in X^{+}$ imply ${\alpha}u \in X^{+}$, and
\item{(C2)}
$X^{+} \cap (-X^{+}) = \{0\}$.
\end{description}

A pair $(X, X^{+})$, where $X^{+}$ is a cone in $X$, is referred to
as an {\em ordered Banach space}.

If $(X, X^{+})$ is an ordered Banach space, for $u, v \in X$ we write
$u \le v$ if $v - u \in X^{+}$, and $u < v$ if $u \le v$ and $u \ne
v$.  The symbols $\ge$ and $>$ are used in an analogous way.

Let $X^{+}$ be a cone in $X$. $X^{+}$ is called {\em total\/} if
$\cl(X^{+} - X^{+}) = X$, {\em reproducing\/} if $X^{+} - X^{+} = X$,
and {\em solid \/}if the interior $X^{++}$ of $X^{+}$ is nonempty,
$X^{+}$ is called {\em normal\/} if there exists $C > 0$ such that
for any $u, v \in X$ satisfying $0 \le u \le v$ there holds $\norm{u}
\le C \norm{v}$.  An ordered Banach space $(X,X^{+})$ is called {\em
strongly ordered\/} if $X^{+}$ is solid.

The following lemma is well known.
\begin{lemma}
\label{lemma:Krein-Shmulyan}
Let $X^{+}$ be a  cone in a
finite\nobreakdash-\hspace{0pt}dimensional Banach space $X$ with norm
$\norm{\cdot}$.
\begin{itemize}
\item[{\rm (1)}]
$X^{+}$ is normal.
\item[{\rm (2)}]
$X^{+}$ is solid iff $X^{+}$ is reproducing iff $X^{+}$ is total.
\item[{\rm (3)}]
If $X^{+}$ is reproducing, then there exists $K \ge 1$ with the
property that for each $u \in X$ there are $u^{+}, u^{-} \in
X^{+}$ such that $u = u^{+} - u^{-}$, $\norm{u^{+}} \le K
\norm{u}$, $\norm{u^{-}} \le K \norm{u}$.
\end{itemize}
\end{lemma}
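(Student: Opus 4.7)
The plan is to handle the three parts independently, exploiting throughout that in a finite\nobreakdash-\hspace{0pt}dimensional space the closed unit ball is compact, any two norms are equivalent, and any linear subspace is automatically closed.

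For part (1) I would argue by contradiction. Suppose normality fails; then one can find sequences $u_n, v_n \in X$ with $0 \le u_n \le v_n$ and $\norm{u_n}/\norm{v_n} \to \infty$. After rescaling so that $\norm{u_n} = 1$, we have $\norm{v_n} \to 0$. By compactness of the unit sphere, pass to a subsequence along which $u_n \to w$ with $\norm{w} = 1$. Since $X^{+}$ is closed, $w \in X^{+}$; and since $v_n - u_n \in X^{+}$ with $v_n \to 0$, we get $-w \in X^{+}$. This contradicts axiom (C2).

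For part (2) I would show solid $\Rightarrow$ reproducing $\Rightarrow$ total $\Rightarrow$ reproducing $\Rightarrow$ solid. Solid $\Rightarrow$ reproducing: pick $e \in X^{++}$ with $B(e,r) \subset X^{+}$; then for any $u \ne 0$, writing $\lambda = 2\norm{u}/r$, the identity $u = \lambda\bigl(e + \tfrac{r}{2\norm{u}}u\bigr) - \lambda e$ exhibits $u$ as a difference of elements of $X^{+}$. Reproducing $\Rightarrow$ total is immediate. Total $\Rightarrow$ reproducing uses that $X^{+} - X^{+}$ is a linear subspace of $X$ (it is closed under addition, scalar multiplication, and negation), and in finite dimensions every subspace is already closed, so $\cl(X^{+} - X^{+}) = X$ forces $X^{+} - X^{+} = X$. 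Reproducing $\Rightarrow$ solid: since $X^{+} - X^{+}$ spans $X$, I can select a basis $u_1, \dots, u_n$ of $X$ with each $u_i \in X^{+}$; the convex hull of $\{0, u_1, \dots, u_n\}$ is then an $n$\nobreakdash-\hspace{0pt}simplex contained in $X^{+}$, and since it contains a linear basis together with $0$ it has nonempty interior in $X$.

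For part (3) I would introduce
\begin{equation*}
V = \{\, u^{+} - u^{-} : u^{\pm} \in X^{+}, \ \norm{u^{\pm}} \le 1 \,\}.
\end{equation*}
This set is convex (as the Minkowski difference of the convex set $X^{+} \cap \bar B$ with itself), symmetric (swap $u^{+}$ and $u^{-}$), and compact (continuous image of the compact set $(X^{+} \cap \bar B) \times (X^{+} \cap \bar B)$). By the reproducing property $\bigcup_{n\ge 1} nV = X$, so $V$ is absorbing, and its Minkowski functional $p_V$ is a seminorm; boundedness of $V$ makes $p_V$ a genuine norm. By equivalence of norms in finite dimensions there is $K \ge 1$ with $p_V(u) \le K\norm{u}$ for every $u$, which is exactly the required uniform decomposition bound.

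There is no deep obstacle here; everything is finite\nobreakdash-\hspace{0pt}dimensional soft analysis. The points that need the most care are the observations that $X^{+} - X^{+}$ is automatically a linear subspace (used crucially in total $\Rightarrow$ reproducing) and that the simplex built in reproducing $\Rightarrow$ solid has interior in $X$ rather than only in an affine hyperplane, which is why one must use $n$ linearly independent vectors together with the origin.
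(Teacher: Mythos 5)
The paper does not actually prove this lemma: it simply remarks that the statement ``is well known'' and, for part (3), points to \cite[Theorem 2.2]{AbAlBu}, where the corresponding fact for general Banach spaces is proved by an open mapping / Baire category argument. Your proposal supplies a complete, self\nobreakdash-\hspace{0pt}contained elementary proof tailored to finite dimensions, and as far as I can tell it is correct. The compactness argument for (1), the four implications in (2), and the Minkowski\nobreakdash-\hspace{0pt}functional argument for (3) are all sound; in particular you correctly exploit that $X^{+} - X^{+}$ is automatically a linear subspace (hence closed in finite dimensions), that a basis drawn from $X^{+}$ together with the origin spans a simplex with nonempty interior, and that compactness of $X^{+} \cap \bar B$ replaces the Baire category argument used in the infinite\nobreakdash-\hspace{0pt}dimensional case.

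One step at the very end of part (3) deserves to be spelled out: from $p_V(u) \le K\norm{u}$ you still need to produce the decomposition $u = u^{+} - u^{-}$ with $\norm{u^{\pm}} \le K\norm{u}$. This follows because $V$ is closed (being compact), so for $u \ne 0$ one has $u/(K\norm{u}) \in V$, hence $u/(K\norm{u}) = w^{+} - w^{-}$ with $w^{\pm} \in X^{+}$, $\norm{w^{\pm}} \le 1$, and setting $u^{\pm} = K\norm{u}\,w^{\pm}$ gives the claim. Also, nothing guarantees the $K$ from norm equivalence is $\ge 1$, but replacing $K$ by $\max\{K,1\}$ costs nothing. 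These are minor omissions, not errors; the proof goes through. Compared to the general Banach space route via \cite{AbAlBu}, your argument is more elementary and makes transparent exactly where finite dimensionality is used, at the price of not generalizing.
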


Observe that Lemma \ref{lemma:Krein-Shmulyan}(3) holds for general
Banach spaces (see \cite[Theorem 2.2]{AbAlBu}).

Let $X^{+}$ be a solid cone in a
finite\nobreakdash-\hspace{0pt}dimensional Banach space $X$. By Lemma
\ref{lemma:Krein-Shmulyan}(1), $X^{+}$ is normal.  By appropriately
renorming $X$ we can assume that the norm $\norm{\cdot}$ has the
property that for any $u, v \in X$, $0 \le u \le v$ implies $\norm{u}
\le \norm{v}$ (see \cite[V.3.1, p.~216]{Schaef}).  Such a norm is
called {\em monotonic\/}. From now on, when speaking of a strongly
ordered Banach space we assume that the norm on $X$ is monotonic.

For an ordered Banach space $(X, X^{+})$ denote by $(X^{*})^{+}$ the
set of all $u^{*} \in X^{*}$ such that $\langle u, u^{*} \rangle \ge
0$ for all $u \in X^{+}$.  If $X^{+}$ is solid then $(X^{*})^{+}$ is
a solid cone in $X^{*}$.

\medskip
Assume that $(X, X^{+})$ is an ordered Banach space.  We say that $u,
v \in X^{+} \setminus \{0\}$ are {\em comparable\/}, written $u \sim
v$, if there are positive numbers $\underline{\alpha},
\overline{\alpha}$ such that $\underline{\alpha}v \le u \le
\overline{\alpha}v$.  The $\sim$ relation is clearly an equivalence
relation.  For a nonzero $u \in X^{+}$ we understand by the {\em
component\/} of $u$, denoted by $C_{u}$, the equivalence class of
$u$, $C_{u} = \{\, v \in X^{+} \setminus \{0\}: v \sim u \,\}$.

\begin{example}
\label{finite-dim-example-standard}
{\em Let
\begin{equation*}
X = \{\, u = (u_1, \dots, \allowbreak u_N)^{\top}: u_i \in \RR \text{
for all } 1 \le i \le N \,\}.
\end{equation*}
By the \emph{standard cone\/} in $X$ we understand
\begin{equation*}
X^{+} = \{\, u = (u_1, \dots, u_N)^{\top} \in X: u_i \ge 0 \text{ for
all } 1 \le i \le N \,\}.
\end{equation*}
$X^{+}$ is a solid cone, with interior
\begin{equation*}
X^{++} = \{\, u = (u_1, \dots, u_N)^{\top} \in X: u_i > 0 \text{ for
all } 1 \le i \le N \,\}.
\end{equation*}
Furthermore, $(X,X^{+})$ is a \emph{lattice\/}: any two $u, v \in X$
have a least upper bound $u \vee v$, $(u \vee v)_i = \max\{u_i,
v_i\}$, $1 \le i \le N$, and a greatest lower bound $u \wedge v$, $(u
\wedge v)_i = \min\{u_i, v_i\}$, $1 \le i \le N$.

In the notation of Lemma~\ref{lemma:Krein-Shmulyan}(3) we specify
$u^{+}$ to be equal to $u \wedge 0$, and $u^{-}$ to be equal to $(-u)
\vee 0$.

All $\ell^{p}$\nobreakdash-\hspace{0pt}norms, $1 \le p \le \infty$,
on $X$ are monotonic.  Indeed, if $1 \le p < \infty$ then for any $0
\le u \le v$ one has
\begin{equation*}
\lVert u \rVert_{p}^{p} = \sum\limits_{i = 1}^{N} \abs{u_i}^{p} \le
\sum\limits_{i = 1}^{N} \abs{v_i}^{p} = \lVert v \rVert_{p}^{p}.
\end{equation*}
Moreover, for those norms the constant $K$ in
Lemma~\ref{lemma:Krein-Shmulyan}(3) can be taken to be $1$:
\begin{equation*}
\lVert u^{+} \rVert_{p}^{p} = \sum\limits_{i = 1}^{N} \abs{\max\{u_i,
0\}}^{p} = \sum\limits_{\substack{i \in \{1, \dots, N\} \\ u_i > 0}}
\abs{u_i}^{p} \le \sum\limits_{i = 1}^{N} \abs{u_i}^{p} = \lVert u
\rVert_{p}^{p},
\end{equation*}
and similarly for $u^{-}$. The case $p = \infty$ is considered in an
analogous way.

When we identify the dual space $X^{*}$ with $\RR^{N}$ and the
duality pairing $\langle u, u^{*} \rangle$ with $u^{\top} u^{*}$, the
cone $(X^{*})^{+}$ is given by the same formula as $X^{+}$.}
\end{example}

Now we introduce our assumptions.

\smallskip

\noindent\textbf{(B0)} (Ordered
finite\nobreakdash-\hspace{0pt}dimensional space) {\em $X^{+}$ is a
reproducing cone in a finite\nobreakdash-\hspace{0pt}dimensional
Banach space $X$ \textup{(}this is equivalent to saying that $(X,
X^{+})$ is a strongly ordered
finite\nobreakdash-\hspace{0pt}dimensional Banach space\textup{)},
with $\dim{X} \ge 2$.}

\smallskip
\noindent (Compare (A0)(i)--(iii) in~\cite{MiShPart1}).  We remark
that (A0)(i) and (ii) in \cite{MiShPart1} are automatically satisfied
for a cone $X^{+}$ in finite\nobreakdash-\hspace{0pt}dimensional
Banach space. (A0)(iii) in \cite{MiShPart1} assumes that $(X,X^{+})$
is a Banach lattice, which implies that $X^{+}$ is reproducing or
equivalently solid  and hence (B0) is weaker than (A0)(iii). In
general, the main results in \cite{MiShPart1} still hold if the
assumption that $(X,X^{+})$ is a Banach lattice is replaced by the
assumption that $X^{+}$ is reproducing (see Remarks
\ref{remark-lyp-exp} and \ref{remark-exp-sep}).

\medskip
\noindent \textbf{(B1)} (Integrability/injectivity) {\em $\Phi =
((U_\omega(t))_{\omega \in \Omega, t \in \TT^{+}},
(\theta_t)_{t\in\TT})$ is a \mlsps\ on a
finite\nobreakdash-\hspace{0pt}dimensional Banach space $X$ covering
an ergodic metric dynamical system $(\theta_{t})_{t \in \TT}$ on
$\OFP$, with the complete measure $\PP$ in the case of $\TT = \RR$,
satisfying the following:}
\begin{itemize}
\item[(i)] (Integrability) {\em
\begin{itemize}
\item
In the discrete\nobreakdash-\hspace{0pt}time case: The
function
\begin{equation*}
[\, \Omega \ni \omega \mapsto \lnplus{\norm{U_{\omega}(1)}}
\in [0,\infty) \,]\in L_1(\OFP).
\end{equation*}
\item
In the continuous\nobreakdash-\hspace{0pt}time case: The
functions
\begin{equation*}
[\, \Omega \ni \omega \mapsto \sup\limits_{0 \le s \le 1}
{\lnplus{\norm{U_{\omega}(s)}}} \in [0,\infty) \,]\in
L_1(\OFP)
\end{equation*}
and
\begin{equation*}
[\, \Omega \ni \omega \mapsto \sup\limits_{0 \le s \le 1}
{\lnplus{\norm{U_{\theta_{s}\omega}(1-s)}}} \in [0,\infty)
\,]\in L_1(\OFP).
\end{equation*}
\end{itemize}
}
\item[(ii)] (Injectivity)
{\em For each $\omega \in \Omega$ the linear operator
$U_{\omega}(1)$ is injective.}
\end{itemize}

\noindent (Compare (A1)(i)--(iii) in~\cite{MiShPart1}. Note that
$U_\omega(1)$ is automatically completely continuous in the
finite-dimensional case).

\medskip
\noindent\textbf{(B2)} (Positivity) {\em $(X,X^+)$ is an ordered
finite\nobreakdash-\hspace{0pt}dimensional Banach space and $\Phi =
((U_\omega(t))_{\omega \in \Omega, t \in \TT^{+}}, \allowbreak
(\theta_t)_{t\in\TT})$ is a \mlsps\ on $X$ covering an ergodic metric
dynamical system $(\theta_{t})_{t \in \TT}$ on $\OFP$, satisfying the
following:
\begin{equation*}
U_{\omega}(t)u_1 \le U_{\omega}(t)u_2
\end{equation*}
for any $\omega \in \Omega$, $t \in \TT^{+}$ and $u_1, u_2 \in X$
with $u_1 \le u_2$.}

\noindent (Compare (A2) in~\cite{MiShPart1}).

It follows immediately that if (B2) is satisfied then there holds
\begin{equation*}
U^{*}_{\omega}(t)u_1^{*} \le U^{*}_{\omega}(t)u_2^{*}
\end{equation*}
for any $\omega \in \Omega$, $t \in \TT^{+}$ and $u_1^{*}, u_2^{*}
\in X^{*}$ with $u_1^{*} \le u_2^{*}$.

\medskip
\noindent\textbf{(B3)} (Focusing) {\em \textup{(B2)} is satisfied and
there are $\mathbf{e} \in X^{+}$  with $\norm{\mathbf{e}} = 1$  and
an $(\mathfrak{F},
\mathfrak{B}(\RR))$\nobreakdash-\hspace{0pt}measurable function
$\varkappa \colon \Omega \to [1,\infty)$ with
$\lnplus{\ln{\varkappa}} \in L_1(\OFP)$ such that for any $\omega \in
\Omega$ and any nonzero $u \in X^{+}$ there is $\beta(\omega,u) > 0$
with the property that
\begin{equation*}
\beta(\omega,u) \mathbf{e} \le U_{\omega}(1)u \le \varkappa(\omega)
\beta(\omega,u)\mathbf{e}.
\end{equation*}
}

\noindent (Compare (A3) in~\cite{MiShPart1}).

\smallskip

\noindent\textbf{(B3)$^*$}  (Focusing)  {\em \textup{(B2)} is
satisfied and there are $\mathbf{e}^{*} \in (X^{*})^{+}$ with $
\norm{\mathbf{e}^{*}} = 1$  and an $(\mathfrak{F},
\mathfrak{B}(\RR))$\nobreakdash-\hspace{0pt}measurable function $
\varkappa^{*} \colon \Omega \to [1,\infty)$ with
$\lnplus{\ln{\varkappa^*}} \in L_1(\OFP)$ such that for any $\omega
\in \Omega$ and any nonzero $u^{*} \in (X^{*})^{+}$ there is
$\beta^{*}(\omega,u^{*}) > 0$ with the property that
\begin{equation*}
\beta^{*}(\omega,u^{*}) \mathbf{e}^{*} \le U^{*}_{\omega}(1)u^{*} \le
\varkappa^{*}(\omega) \beta^{*}(\omega,u^{*})\mathbf{e}^{*}.
\end{equation*}
}

\noindent\textbf{(B4)} (Strong focusing) {\em \textup{(B3)},
\textup{(B3)$^*$} are satisfied and  $\ln{\varkappa} \in L_1(\OFP)$,
$\ln{\varkappa^*} \in L_1(\OFP)$, and $\langle \mathbf{e},
\mathbf{e}^* \rangle > 0$.}

\smallskip
\noindent (Compare (A4) in~\cite{MiShPart1}).

\medskip
\noindent\textbf{(B5)} (Strong positivity in one direction) {\em
There are $\mathbf{\overline{e}} \in X^+$ with
$\norm{\mathbf{\overline{e}}} = 1$ and an $(\mathfrak{F},
\mathfrak{B}(\RR))$\nobreakdash-\hspace{0pt}measurable function $\nu
\colon \Omega \to (0,\infty)$, with $\ln^{-}{\nu} \in L_1(\OFP)$,
such that
\begin{equation*}
U_{\omega}(1) \mathbf{\overline{e}} \ge \nu(\omega)
\mathbf{\overline{e}}\quad \forall \ \omega \in \Omega.
\end{equation*}
}

\noindent (Compare (A5) in~\cite{MiShPart1}).

\smallskip

\noindent\textbf{(B5)$^*$} (Strong positivity in one direction) {\em
There are $\mathbf{\bar e^*}\in (X^*)^+$ with
$\norm{\mathbf{\overline{e}^*}} = 1$ and an $(\mathfrak{F},
\mathfrak{B}(\RR))$\nobreakdash-\hspace{0pt}measurable function
$\nu^* \colon \Omega \to (0,\infty)$, with $\ln^{-}{\nu^*} \in
L_1(\OFP)$, such that
\begin{equation*}
U_{\omega}^*(1) \mathbf{\overline{e}^*} \ge \nu^*(\omega)
\mathbf{\overline{e}^*} \quad \forall \ \omega \in \Omega.
\end{equation*}
}

\noindent (Compare (A5)$^*$ in~\cite{MiShPart1}).

\begin{remark}
\label{assumption-rk}
{\em We can replace time $1$ with some nonzero $T$ belonging to
$\TT^{+}$ in (B1), (B3), (B4),  and  (B3)$^*$.}
\end{remark}

We now state the definitions introduced  in Part I. Throughout the
rest of this subsection, we assume $(X,X^{+})$ is a
finite\nobreakdash-\hspace{0pt}dimensional ordered Banach space, and
(B2).

\begin{definition}[Entire orbit]
For $\omega \in \Omega$, by an {\em entire orbit\/} of $U_{\omega}$
we understand a mapping $v_{\omega} \colon \TT \to X$ such that
$v_{\omega}(s + t) = U_{\theta_{s}\omega}(t) v_{\omega}(s)$ for any
$s \in \TT$ and $t \in \TT^{+}$.  The function constantly equal to
zero is referred to as the {\em trivial entire orbit\/}.
\end{definition}
\begin{definition}[Entire positive orbit]
An entire orbit $v_{\omega}$ of $U_{\omega}$ is called an {\em entire
positive orbit\/} if $v_{\omega}(t) \in X^{+}$ for all $t \in \TT$.
An entire positive orbit is {\em nontrivial\/} if $v_{\omega}(t) \in
X^{+} \setminus \{0\}$ for all $t \in \TT$.
\end{definition}
Entire (positive) orbits of $\Phi^*$ are defined in a  similar way.

\medskip
A family $\{E(\omega)\}_{\omega \in \Omega_0}$ of
$l$\nobreakdash-\hspace{0pt}dimensional vector subspaces of $X$ is
{\em measurable\/} if there are $(\mathfrak{F},
\mathfrak{B}(X))$-measurable functions $v_1, \dots, v_l \colon
\Omega_0 \to X$, $\PP(\Omega_0) = 1$, such that $(v_1(\omega), \dots,
v_l(\omega))$ forms a basis of $E(\omega)$ for each $\omega \in
\Omega_0$.

Let $\{E(\omega)\}_{\omega \in \Omega_0}$ be a family of
$l$\nobreakdash-\hspace{0pt}dimensional vector subspaces of $X$, and
let $\{F(\omega)\}_{\omega \in \Omega_0}$ be a family of
$l$\nobreakdash-\hspace{0pt}codimensional vector subspaces of $X$,
such that $E(\omega) \oplus F(\omega) = X$ for all $\omega \in
\Omega_0$, $\PP(\Omega_0) = 1$.  We define the {\em family of
projections associated with the decomposition\/} $E(\omega) \oplus
F(\omega) = X$ as $\{P(\omega)\}_{\omega \in \Omega_0}$, where
$P(\omega)$ is the linear projection of $X$ onto $F(\omega)$ along
$E(\omega)$, for each $\omega \in \Omega_0$.

The following remark is in~order regarding measurability of
decomposition.  In Part~I (\cite{MiShPart1}) of the series, as well
as in, for example, \cite{Lian-Lu}, when a decomposition $E(\omega)
\oplus F(\omega) = X$ of a (perhaps infinite-dimensional) Banach
space $X$ is considered, where $E(\omega)$ has finite dimension $l$,
the assumption is that the family $\{E(\omega)\}_{\omega \in
\Omega_0}$ is measurable and that the family of projections
$\{P(\omega)\}_{\omega \in \Omega_0}$ is {\em strongly measurable\/}:
for each $u \in X$ the mapping $[\, \Omega_0 \ni \omega \mapsto
P(\omega)u \in X \,]$ is $(\mathfrak{F},
\mathfrak{B}(X))$\nobreakdash-\hspace{0pt}measurable.  But in the
finite-dimensional case this is equivalent to saying that both
families $\{E(\omega)\}_{\omega \in \Omega_0}$ and
$\{F(\omega)\}_{\omega \in \Omega_0}$ are measurable.  We call such a
decomposition a {\em measurable decomposition\/}.

We say that the decomposition $E(\omega) \oplus F(\omega) = X$ is
{\em invariant\/} if $\Omega_0$ is invariant, $U_{\omega}(t)E(\omega)
= E(\theta_{t}\omega)$ and $U_{\omega}(t)F(\omega) \subset
F(\theta_{t}\omega)$, for each $t \in \TT^{+}$.

A (strongly measurable) family of projections associated with the
invariant measurable decomposition $E(\omega) \oplus F(\omega) = X$
is referred to as {\em tempered\/} if
\begin{equation*}
\lim\limits_{\substack{t \to \pm\infty \\ t \in \TT}}
\frac{\ln{\norm{P(\theta_{t}\omega)}}}{t} = 0 \qquad \PP\text{-a.s.
on }\Omega_0.
\end{equation*}

\begin{definition}[Generalized principal Floquet subspaces and
principal Lyapunov exponent]
\label{generalized-floquet-space}
A family of one\nobreakdash-\hspace{0pt}dimensional subspaces
$\{\tilde{E}(\omega)\}_{\omega \in \tilde{\Omega}}$ of $X$ is called
a family of {\em generalized principal Floquet subspaces} of $\Phi =
(U_\omega(t), \theta_t)$ if $\tilde{\Omega} \subset \Omega$ is
invariant, $\PP(\tilde{\Omega}) = 1$, and
\begin{itemize}
\item[{\rm (i)}]
$\tilde{E}(\omega) = \spanned{\{w(\omega)\}}$ with $w \colon
\tilde{\Omega} \to X^+ \setminus \{0\}$ being $(\mathfrak{F},
\mathfrak{B}(X))$\nobreakdash-\hspace{0pt}measurable,
\item[{\rm (ii)}]
$U_{\omega}(t) \tilde{E}(\omega) = \tilde{E}(\theta_{t}\omega)$,
for any $\omega \in \tilde{\Omega}$ and any $t \in \TT^{+}$,
\item[{\rm (iii)}]
there is $\tilde{\lambda} \in [-\infty, \infty)$ such that
\begin{equation*}
\tilde{\lambda} = \lim_{\substack{t\to\infty \\ t \in \TT^{+}}}
\frac{1}{t} \ln{\norm{U_\omega(t)w(\omega)}}\quad \forall
\omega\in  \tilde{\Omega},
\end{equation*}
 and
\item[{\rm (iv)}]
\begin{equation*}
\limsup_{\substack{t\to\infty \\ t \in \TT^{+}}} \frac{1}{t}
\ln{\norm{U_\omega(t)u}} \le \tilde{\lambda}\quad \forall \omega
\in \tilde{\Omega}\,\,\,\text{and}\,\, \, \forall u \in X
\setminus \{0\}.
\end{equation*}
\end{itemize}
$\tilde{\lambda}$ is called the {\em generalized principal Lyapunov
exponent} of $\Phi$ associated to the generalized principal Floquet
subspaces $\{\tilde E(\omega)\}_{\omega\in\tilde\Omega}$.
\end{definition}

Observe that if $\{\tilde E(\omega)\}_{\omega\tilde\in\Omega}$ is a
family of generalized principal Floquet subspaces of $ (U_\omega(t),
\theta_t)$, then for any $\omega \in \tilde{\Omega}$ the function
$v_\omega \colon \TT \to X^{+} \setminus \{0\}$,
\begin{equation}
\label{eq:entire-positive}
v_\omega(t) = \begin{cases}
U_\omega(t) w(\omega), & \quad t\in\TT, \ t \ge 0 \\[0.5ex]
\displaystyle
\frac{\norm{w(\omega)}}{\norm{U_{\theta_t\omega}(-t)w(\theta_t\omega)}}
w(\theta_t\omega), & \quad t \in \TT, \ t < 0
\end{cases}
\end{equation}
is a nontrivial entire positive orbit of $U_{\omega}$.

\begin{definition}[Generalized exponential separation]
\label{generalized-exponential-separation}
$\Phi = (U_\omega(t), \theta_t)$ is said to admit a {\em generalized
exponential separation\/} if there are a family of generalized
principal Floquet subspaces $\{\tilde{E}(\omega)\}_{\omega \in
\tilde{\Omega}}$ and a measurable family of
one\nobreakdash-\hspace{0pt}codimensional subspaces
$\{\tilde{F}(\omega)\}_{\omega \in \tilde{\Omega}}$ of $X$ satisfying
the following
\begin{itemize}
\item[{\rm (i)}]
$\tilde F(\omega) \cap X^{+} = \{0\}$ for any $\omega \in
\tilde{\Omega}$,
\item[{\rm (ii)}]
$X = \tilde{E}(\omega) \oplus \tilde{F}(\omega)$ for any
$\omega\in\tilde\Omega$, where the decomposition is invariant,
and the family of projections associated with this decomposition
is tempered,
\item[{\rm (iii)}]
there exists $\tilde{\sigma} \in (0, \infty]$ such that
\begin{equation*}
\lim_{\substack{t\to\infty \\ t \in \TT^{+}}} \frac{1}{t}
\ln{\frac{\norm{U_{\omega}(t)|_{\tilde{F}(\omega)}}}
{\norm{U_{\omega}(t) w(\omega)}}} = -\tilde{\sigma}\quad \forall
\omega \in \tilde{\Omega}.
\end{equation*}
\end{itemize}
We say that $\{\tilde{E}(\cdot), \tilde{F}(\cdot), \tilde{\sigma} \}$
{\em generates a generalized exponential separation}.
\end{definition}

\medskip
We end this subsection with the following theorem, which follows from
the Oseledets-type theorems proved in \cite{Rue1}.

\begin{theorem}
\label{Oseledets-thm}
Let $X$ be a finite\nobreakdash-\hspace{0pt}dimensional Banach space,
and let $\Phi$ be a \mlsps\ satisfying \textup{(B1)(i)}. Then there
exists an invariant $\Omega_0 \subset \Omega$, $\PP(\Omega_0) = 1$,
and $\lambda_1 \in [-\infty,\infty)$ with the property that
\begin{equation}
\label{lambda-1-eq-0}
\lim\limits_{\substack{t\to\infty \\ t \in \TT^{+}}}
\frac{\ln{\norm{U_{\omega}(t)}}}{t} = \lambda_1\quad \forall \omega
\in \Omega_0.
\end{equation}
Moreover, either
\begin{itemize}
\item[{\rm (i)}]
\begin{equation}
\label{lambda-1-eq-1}
\lim\limits_{\substack{t\to\infty \\ t \in \TT^{+}}}
\frac{\ln{\norm{U_{\omega}(t)u}}}{t} = \lambda_1 \quad \forall
\omega \in \Omega_0\,\,\,\text{and}\,\,\, \forall u \in X
\setminus \{0\},
\end{equation}
\end{itemize}
or
\begin{itemize}
\item[{\rm (ii)}]
there exist $\hat{\lambda}_2 < \lambda_1$, and a measurable
family of linear subspaces $\{\hat{F}_1(\omega)\}_{\omega \in
\Omega_0}$ of $X$ such that
\begin{itemize}[\textbullet]
\item
$U_{\omega}(t) \hat{F}_1(\omega) \subset
\hat{F}_1(\theta_{t}\omega)$, for all $\omega \in \Omega_0$
and all $t \in \TT^{+}$,
\item
\begin{equation}
\label{lambda-1-eq-2}
\lim\limits_{\substack{t\to\infty \\ t \in \TT^{+}}}
\frac{\ln{\norm{U_{\omega}(t)u}}}{t} = \lambda_1 \quad \forall
\omega \in \Omega_0\,\,\,\text{and}\,\,\, \forall u \in X
\setminus \hat{F}_1(\omega),
\end{equation}
\item
\begin{equation}
\label{lambda-2-eq}
\lim\limits_{\substack{t\to\infty \\ t \in \TT^{+}}}
\frac{\ln{\norm{U_{\omega}(t)|_{\hat F_1(\omega)}}}}{t} =
\hat{\lambda}_2\quad \forall \omega \in \Omega_0.
\end{equation}
\end{itemize}
Moreover, if \textup{(B1)(ii)} holds, then there is a measurable
family of linear subspaces $\{E_1(\omega)\}_{\omega \in
\Omega_0}$ of $X$ such that
\begin{itemize}[\textbullet]
\item
$U_{\omega}(t) E_1(\omega) = E_1(\theta_{t}\omega)$, for all
$\omega \in \Omega_0$ and all $t \in \TT^{+}$,
\item
$X = E_1(\omega) \oplus \hat{F}_1(\omega)$ for any $\omega
\in \Omega_0$, where the family of projections associated
with this invariant decomposition is tempered,
\item
\begin{equation}
\label{lambda-1-eq-3}
\lim\limits_{\substack{t\to\pm\infty \\ t \in \TT}}
\frac{\ln{\norm{U_{\omega}(t)u}}}{t} = \lambda_1 \quad
\forall \omega \in \Omega_0 \text{ and } \forall u \in
E_1(\omega) \setminus \{0\}.
\end{equation}
\end{itemize}
\end{itemize}
\end{theorem}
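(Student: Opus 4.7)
The plan is to derive Theorem~\ref{Oseledets-thm} as a specialization of Ruelle's multiplicative ergodic theorem~\cite{Rue1} to our finite\nobreakdash-\hspace{0pt}dimensional cocycle $U_\omega(t)$. In finite dimensions every bounded linear operator is compact, so the quasi\nobreakdash-\hspace{0pt}compactness hypotheses in Ruelle's framework are automatically satisfied, and the only substantive input is the integrability condition~(B1)(i).

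First I would produce $\lambda_1$. The cocycle identity~\eqref{eq-cocycle} together with submultiplicativity of the operator norm makes $t \mapsto \ln\norm{U_\omega(t)}$ subadditive, so Kingman's subadditive ergodic theorem—combined with ergodicity of $(\theta_t)$—yields a $\theta$\nobreakdash-\hspace{0pt}invariant $\Omega_0$ of full measure and a constant $\lambda_1 \in [-\infty,\infty)$ satisfying~\eqref{lambda-1-eq-0}; the value $+\infty$ is excluded by~(B1)(i), which supplies an integrable upper bound. Next, the one\nobreakdash-\hspace{0pt}sided multiplicative ergodic theorem of~\cite{Rue1} furnishes distinct Lyapunov exponents $\lambda_1 > \hat\lambda_2 > \cdots > \hat\lambda_r \ge -\infty$ together with a measurable invariant filtration $X = V_1(\omega) \supsetneq V_2(\omega) \supsetneq \cdots \supsetneq V_r(\omega) \supsetneq \{0\}$ characterized by $\lim_{t\to\infty}\frac{1}{t}\ln\norm{U_\omega(t)u} = \hat\lambda_j$ whenever $u \in V_j(\omega) \setminus V_{j+1}(\omega)$. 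Alternative~(i) of the theorem corresponds to $r=1$; alternative~(ii) corresponds to $r \ge 2$, in which case one sets $\hat F_1(\omega) := V_2(\omega)$: forward invariance is inherited from the filtration,~\eqref{lambda-1-eq-2} is read off from vectors outside $V_2(\omega)$, and~\eqref{lambda-2-eq} follows from a second application of Kingman to the restricted cocycle on $V_2(\omega)$, whose top Lyapunov exponent is by construction $\hat\lambda_2$.

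For the splitting under~(B1)(ii), note that the rank\nobreakdash-\hspace{0pt}nullity theorem in the finite\nobreakdash-\hspace{0pt}dimensional space $X$ promotes injectivity of each $U_\omega(1)$ (and hence of each $U_\omega(t)$, $t \in \TT^{+}$) to invertibility, so $\Phi$ admits a measurable two\nobreakdash-\hspace{0pt}sided extension through $U_\omega(-t) := U_{\theta_{-t}\omega}(t)^{-1}$. Invoking the two\nobreakdash-\hspace{0pt}sided version of Ruelle's theorem for this extension yields the invariant splitting $X = E_1(\omega) \oplus \hat F_1(\omega)$ with $E_1(\omega)$ the top Oseledets subspace; the projection family is tempered by the classical Oseledets estimate, and the symmetric growth relation~\eqref{lambda-1-eq-3} follows from the two\nobreakdash-\hspace{0pt}sided multiplicative ergodic theorem restricted to $E_1(\omega)$.

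The main obstacle is that~(B1)(i) only supplies \emph{forward} integrability, whereas the naive two\nobreakdash-\hspace{0pt}sided theorem would demand $\ln^{+}\norm{U_\omega(1)^{-1}} \in L_1(\OFP)$ as well, a hypothesis not present here. The remedy I would employ is to construct $E_1(\omega)$ intrinsically, as the set of $u \in X$ admitting a backward pre\nobreakdash-\hspace{0pt}history $\{u_n\}_{n \ge 0}$ with $U_{\theta_{-n}\omega}(n)u_n = u$ realising the rate $\frac{1}{n}\ln\norm{u_n} \to -\lambda_1$—the ``slowest decaying past trajectories.'' Under injectivity this definition is unambiguous, it coincides on $\Omega_0$ with the top Oseledets subspace produced by the two\nobreakdash-\hspace{0pt}sided theorem, and its projection onto $\hat F_1(\omega)$ is tempered by a Birkhoff application to the projection cocycle; this packaging lets one avoid invoking backward integrability outright.
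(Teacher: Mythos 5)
The paper itself offers no proof beyond the remark that the theorem ``follows from the Oseledets-type theorems proved in \cite{Rue1}''; your sketch takes the same route, but you add enough detail to expose a genuine subtlety, and it is precisely at that subtlety that your argument develops a gap.

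Your treatment of $\lambda_1$ via Kingman and of the filtration via Ruelle's one-sided MET, together with the identification of cases (i) and (ii), is fine and uses the hypotheses (B1)(i), ergodicity, and compactness of operators in finite dimensions in exactly the expected way. The problem is in the final paragraph. You correctly observe that (B1)(i) gives only $\lnplus\norm{U_\omega(1)}\in L_1$, not $\lnplus\norm{U_\omega(1)^{-1}}\in L_1$, so the off-the-shelf two-sided MET is not directly applicable. Your proposed remedy — define $E_1(\omega)$ as those $u$ possessing a backward pre-history of decay rate $\lambda_1$ — is a reasonable candidate, but then you justify it by asserting that it ``coincides on $\Omega_0$ with the top Oseledets subspace produced by the two-sided theorem,'' which is circular: that is exactly the theorem whose hypotheses you have just said are unavailable. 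More seriously, the claim that temperedness of the projections follows ``by a Birkhoff application to the projection cocycle'' is unsupported. The standard route is to write $P(\theta\omega)=U_\omega(1)P(\omega)U_\omega(1)^{-1}$, which gives $\ln\norm{P(\theta\omega)}-\ln\norm{P(\omega)}\le\ln\norm{U_\omega(1)}+\ln\norm{U_\omega(1)^{-1}}$, and then invoke a Tanny-type subadditivity lemma — but the right-hand side is not known to be integrable here, so this chain breaks. Before Birkhoff (or Tanny) can be applied, one must first establish that $\ln\norm{P(\cdot)}$ (equivalently, $-\ln$ of the minimal angle between $E_1$ and $\hat F_1$) is controlled, which is itself the crux of the matter and cannot be asserted.

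A cleaner way to close the gap — and one more in the spirit of the machinery the authors already develop in \cite{MiShPart1} — is to avoid the backward cocycle entirely: apply the one-sided MET to the dual cocycle $U^{*}_\omega(t)$, whose integrability is equivalent to that of $U_\omega(t)$, to obtain a codimension-one fast subspace of $X^{*}$, and take $E_1(\omega)$ to be its annihilator in $X$; temperedness of the associated projections is then deduced from the forward exponential separation $\lambda_1>\hat\lambda_2$ rather than from any backward estimate. As written, though, your last paragraph is a statement of intent rather than a proof, and the two assertions I highlighted (coincidence with the two-sided theorem's space, and temperedness via Birkhoff) would not survive scrutiny without substantial additional work.
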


\subsection{General Theorems}
\label{subsection:general-theorems}

In this subsection, we state some general theorems, most of which are
established in Part I (\cite{MiShPart1}).

Throughout this subsection, we assume that $(X,X^{+})$ is an ordered
finite\nobreakdash-\hspace{0pt}dimensional Banach space. The first
theorem concerns the existence of entire positive solutions, which
partially follows from \cite[Theorem 3.5]{MiShPart1}.

\begin{theorem}[Entire positive orbits]
\label{thm:entire-orbits-existence}
Assume  \textup{(B1)(i)} and \textup{(B2)}.
\begin{enumerate}
\item[{\rm (i)}]
Let $U_{\omega}(t) (X^{+} \setminus \{0\}) \subset X^{+}
\setminus \{0\}$ for all $\omega \in \Omega$ and all $t \in
\TT^{+}$. Then for each $\omega \in \Omega$ there exists an
entire positive orbit $v_{\omega} \colon \TT \to X^{+} \setminus
\{0\}$ of $U_\omega$.
\item[{\rm (ii)}]
Assume moreover that $X^{+}$ is reproducing.  Let
\textup{(B1)(ii)} be satisfied.  If (ii) in
Theorem~\ref{Oseledets-thm} holds then for each $\omega \in
\Omega_0$ an entire positive orbit can be chosen so that
$v_{\omega}(t) \in X^{+} \cap E_1(\theta_{t}\omega)$ for all $t
\in \TT$.
\end{enumerate}
\end{theorem}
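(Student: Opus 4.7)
The plan is to obtain part (i) by a direct appeal to Theorem~3.5 of~\cite{MiShPart1}, and to reduce part (ii) to part (i) by showing that the positive cone meets the top Oseledets subbundle $\{E_{1}(\omega)\}$ non-trivially.

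For part (i), the hypotheses (B1)(i), (B2), and $U_{\omega}(t)(X^{+}\setminus\{0\})\subset X^{+}\setminus\{0\}$ are exactly the hypotheses of Theorem~3.5 in Part~I; the complete continuity of $U_{\omega}(1)$ required there is automatic in finite dimensions. Concretely one may view the construction as a normalized pullback: for a fixed $u_{0}\in X^{+}\setminus\{0\}$, the vectors $U_{\theta_{-n}\omega}(n)u_{0}\in X^{+}\setminus\{0\}$ (after normalization) lie in the compact set $X^{+}\cap\{\norm{\cdot}=1\}$, and a diagonal extraction across times $t\in\TT$ produces the desired entire positive orbit $v_{\omega}$.

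For part (ii) the crux is the claim that $X^{+}\cap E_{1}(\omega)\neq\{0\}$ for every $\omega\in\Omega_{0}$. Let $P(\omega)$ be the projection onto $\hat F_{1}(\omega)$ along $E_{1}(\omega)$ and $Q(\omega):=\Id-P(\omega)$. Since $X^{+}$ is reproducing, Lemma~\ref{lemma:Krein-Shmulyan}(3) furnishes $K\ge 1$ such that any unit vector $e_{n}\in E_{1}(\theta_{-n}\omega)$ can be written $e_{n}=e_{n}^{+}-e_{n}^{-}$ with $e_{n}^{\pm}\in X^{+}$, $\norm{e_{n}^{\pm}}\le K$; because $Q(\theta_{-n}\omega)(e_{n}^{+}-e_{n}^{-})=e_{n}$, one of $\norm{Q(\theta_{-n}\omega)e_{n}^{\pm}}$ is at least $1/2$. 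Replacing $e_{n}$ by $-e_{n}$ if necessary and normalising gives $u_{n}\in X^{+}$ with $\norm{u_{n}}=1$ and $\norm{Q(\theta_{-n}\omega)u_{n}}\ge 1/(2K)$. Set $\tilde y_{n}:=U_{\theta_{-n}\omega}(n)u_{n}/\norm{U_{\theta_{-n}\omega}(n)u_{n}}\in X^{+}\cap\{\norm{\cdot}=1\}$; compactness yields a subsequential limit $v^{*}(\omega)\in X^{+}$. Invariance of the Oseledets splitting gives
\[
\norm{P(\omega)\tilde y_{n}}\le\frac{\norm{U_{\theta_{-n}\omega}(n)|_{\hat F_{1}(\theta_{-n}\omega)}}\,\norm{P(\theta_{-n}\omega)}}{\norm{U_{\theta_{-n}\omega}(n)u_{n}}},
\]
whose numerator grows at most like $e^{n(\hat\lambda_{2}+\epsilon)}$ and whose denominator grows at least like $e^{n(\lambda_{1}-\epsilon)}$ (using \eqref{lambda-2-eq}, the temperedness of $\norm{P}$, the uniform lower bound $\norm{Q(\theta_{-n}\omega)u_{n}}\ge 1/(2K)$, and the Oseledets lower bound along $E_{1}$). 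Since $\hat\lambda_{2}<\lambda_{1}$ the ratio tends to $0$, so $P(\omega)v^{*}(\omega)=0$ and $v^{*}(\omega)\in X^{+}\cap E_{1}(\omega)\setminus\{0\}$.

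Once the claim is established, $X^{+}\cap E_{1}(\omega)$ is a non-trivial cone in $E_{1}(\omega)$, and the restricted cocycle $U_{\omega}(t)|_{E_{1}(\omega)}$ preserves it, because for $0\neq u\in X^{+}\cap E_{1}(\omega)$ the vector $U_{\omega}(t)u$ lies in $E_{1}(\theta_{t}\omega)$ (invariance), in $X^{+}$ (positivity), and is nonzero (injectivity (B1)(ii)). Applying part (i) to this restricted skew-product on $\{E_{1}(\omega)\}$ delivers an entire positive orbit $v_{\omega}(t)\in X^{+}\cap E_{1}(\theta_{t}\omega)$. The principal obstacle is turning the asymptotic Oseledets estimates at the moving base points $\theta_{-n}\omega$ into the uniform exponential bounds needed in the numerator/denominator estimate above; this is handled by the standard temperedness of the Oseledets constants together with the $\theta$-invariance of the full-measure set on which Theorem~\ref{Oseledets-thm} applies.
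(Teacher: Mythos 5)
Your proposal is essentially correct in its mathematics, but it runs the paper's logic in the opposite direction, and one of its appeals to Part~I is likely not available.

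The paper proves part~(i) directly by the nested-compact-sets pullback argument (the sets $G_n = \bigcap_{m\ge n}\Image(\mathcal{U}(m;n))$), and then disposes of part~(ii) by a bare citation to Theorem~3.5 of Part~I. You do the reverse: you cite Part~I's Theorem~3.5 for~(i), and then give a self-contained reduction of~(ii) to~(i). The catch is that your claim ``the hypotheses of~(i) are exactly the hypotheses of Theorem~3.5 in Part~I'' is almost certainly wrong: the paper would not have written out a direct proof of~(i) and reserved the citation for~(ii) if~(i) were an immediate corollary. Comparing the two statements here, part~(ii) additionally assumes $X^{+}$ reproducing and (B1)(ii) (injectivity); that is in all likelihood what Theorem~3.5 of Part~I needs, and part~(i) --- which only has the weaker ``positive $\mapsto$ positive'' hypothesis --- deliberately avoids injectivity. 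Fortunately your back-up ``normalized pullback with diagonal extraction'' sketch for~(i) is precisely the paper's argument in disguise, so no harm is done, but the attribution should be corrected.

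Your from-scratch proof of~(ii) is the genuinely interesting deviation. The splitting $e_n=e_n^{+}-e_n^{-}$ via Lemma~\ref{lemma:Krein-Shmulyan}(3), the lower bound $\norm{Q(\theta_{-n}\omega)u_n}\ge 1/(2K)$, and the push-forward estimate comparing $\norm{U_{\theta_{-n}\omega}(n)|_{\hat F_1}}$ with the growth along $E_1$ do show $X^{+}\cap E_1(\omega)\ne\{0\}$, provided the Oseledets estimates at the moving base point $\theta_{-n}\omega$ are made uniform via tempered constants --- you flag this explicitly and it is standard. Two loose ends worth stating: first, the cone $X^{+}\cap E_1(\omega)$ varies with $\omega$, so ``apply part~(i)'' is really ``rerun the nested-compact-sets argument on the bundle $\{E_1(\omega)\}$ with the varying cone,'' which works but is not literally an invocation of~(i); second, nonzero preservation of this restricted cone is exactly where (B1)(ii) is used, which is why~(ii) carries that extra hypothesis. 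With those caveats, your route gives a cleaner self-contained treatment of~(ii) than the bare citation in the paper, at the cost of rederiving what Part~I already proved.
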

\begin{proof}
(i)  Denote $\mathcal{S}_1(X^{+}) := \{\, u \in X^{+}: \norm{u} = 1
\,\}$.

Fix $\omega \in \Omega$.  For $(m,n) \in \ZZ^2$ such that $0 \le n
\le m$ define the mapping $\mathcal{U}(m;n) \colon
\mathcal{S}_1(X^{+}) \to \mathcal{S}_1(X^{+})$ by the formula
\begin{equation*}
\mathcal{U}(m;n)(u) := \frac{U_{\theta_{-m}\omega}(m-n) u}
{\norm{U_{\theta_{-m}\omega}(m-n) u}}.
\end{equation*}
By the assumptions, the mappings $\mathcal{U}(m;n)$ are well defined
and continuous.  It follows from~\eqref{eq-cocycle} that
$\mathcal{U}(m_2;n) = \mathcal{U}(m_1;n) \circ \mathcal{U}(m_2;m_1)$,
consequently $\Image(\mathcal{U}(m_2;n)) \allowbreak \subset
\Image(\mathcal{U}(m_1;n))$, for any $0 \le n \le m_1 \le m_2$.

For $n = 0, 1, 2, \dots$ let
\begin{equation*}
G_n := \bigcap\limits_{m=n}^{\infty} \Image(\mathcal{U}(m;n)).
\end{equation*}
$G_n$, as the intersection of a nonincreasing family of compact
nonempty sets, is compact and nonempty, too.

Notice that $u \in \mathcal{S}_1(X^{+})$ belongs to $G_n$ if~and
only~if there is a sequence $(u^{(m)})_{m=n}^{\infty}$ with $u^{(n)}
= u$ such that $u^{(m)} \in \mathcal{S}_1(X^{+})$ and $u^{(m)} =
\mathcal{U}(m+1;m)u^{(m+1)}$ for each $m = n, n+1, n+2, \dots$.

It suffices now to pick one $u \in G_0$ and
$\{u^{(m)}\}_{m=0}^\infty\subset \mathcal{S}_1(X^{+})$ with $u^{(0)}
= u$ and $u^{(m)} = \mathcal{U}(m+1;m)u^{(m+1)}$ for each $m = 0,1,2,
\dots$. Then put
\begin{equation*}
v_{\omega}(n) :=
\begin{cases}
U_{\omega}(n) u^{(0)} & \quad \text{for }  n = 0, 1, 2, \dots \\[1ex]
\displaystyle \frac{u^{(-n)}}{\norm{U_{\theta_{-1}\omega}(1) u^{(1)}}
\norm{U_{\theta_{-2}\omega}(1) u^{(2)}} \dots
\norm{U_{\theta_{n}\omega}(1) u^{(-n)}}} & \quad \text{for } n = -1,
-2, \dots
\end{cases}
\end{equation*}
This concludes the proof of~(i) in the
discrete\nobreakdash-\hspace{0pt}time case.  In the
continuous\nobreakdash-\hspace{0pt}time case one puts $v_{\omega}(t)
:= U_{\theta_{\intpart{t}}\omega}(t - \intpart{t})
v_{\omega}(\intpart{t})$ for any $\omega \in \Omega$ and any $t \in
\RR \setminus \ZZ$.

(ii) follows from~\cite[Theorem 3.5]{MiShPart1}.
\end{proof}

The next theorem shows the existence of generalized Floquet subspaces
and principal Lyapunov exponent, which partially follows from
\cite[Theorem 3.6]{MiShPart1}.
\begin{theorem}[Generalized principal Floquet subspace and Lyapunov exponent]
\label{theorem-w}
Assume \textup{(B1)(i)}, \textup{(B2)} and \textup{(B3)}.  Then there
exist an invariant set $\tilde{\Omega}_1 \subset \Omega$,
$\PP(\tilde{\Omega}_1) = 1$, and an $(\mathfrak{F},
\mathfrak{B}(X))$\nobreakdash-\hspace{0pt}measurable function $w
\colon \tilde{\Omega}_1 \to X$, $w(\omega) \in C_{\mathbf{e}}$ and
$\norm{w(\omega)} = 1$ for all $\omega \in \tilde{\Omega}_1$, having
the following properties:
\begin{itemize}
\item[{\rm (1)}]
\begin{equation*}
w(\theta_{t}\omega) =
\frac{U_{\omega}(t)w(\omega)}{\norm{U_{\omega}(t)w(\omega)}}
\end{equation*}
for any $\omega \in \tilde{\Omega}_1$ and $t \in \TT^{+}$.

\item[{\rm (2)}]
Let for some $\omega \in \tilde{\Omega}_1$ a function $v_{\omega}
\colon \TT \to X^{+} \setminus \{0\}$ be an entire orbit of
$U_{\omega}$.  Then $v_{\omega}(t) = \norm{v_{\omega}(0)}
w_{\omega}(t)$ for all $t \in \TT$, where
\begin{equation*}
w_{\omega}(t) := \begin{cases}
(U_{\theta_{t}\omega}(-t)|_{\tilde{E}_1(\theta_{t}\omega)})^{-1}
w(\omega) & \qquad \text{for } t \in \TT, \ t < 0 \\
U_{\omega}(t) w(\omega) & \qquad \text{for } t \in \TT^{+},
\end{cases}
\end{equation*}
with $\tilde E_1(\omega) = \spanned\{w(\omega)\}$.
\item[{\rm (3)}]
There exists $\tilde{\lambda}_1 \in [-\infty,\infty)$ such that
\begin{equation*}
\tilde{\lambda}_1 = \lim_{\substack{t\to \pm\infty \\ t \in
\TT}} \frac{1}{t} \ln{\rho_{t}(\omega)} =
\int\limits_{\Omega} \ln{\rho_{1}}(\omega)\, d\PP
\end{equation*}
for each $\omega \in \tilde{\Omega}_1$, where
\begin{equation*}
\rho_t(\omega) := \begin{cases} \norm{U_\omega(t)w(\omega)} & \quad
\text{for } t \ge 0, \\
1/\norm{U_{\theta_{t}\omega}(-t)w(\theta_{t}\omega)} & \quad
\text{for } t < 0.
\end{cases}
\end{equation*}
\item[{\rm (4)}] Assume, moreover, \textup{(B0)}. Then
$\tilde{\lambda}_1 = \lambda_1$, where $\lambda_1$ is as
in~Theorem~\ref{Oseledets-thm}.  In~particular, for any $u \in X
\setminus \{0\}$,
\begin{equation*}
\limsup_{\substack{t \to \infty \\ t \in \TT^{+}}} \frac{1}{t}
\ln{\norm{U_\omega(t)u}} \le \tilde{\lambda}_1,
\end{equation*}
and then $\{\tilde E_1(\omega)\}_{\omega \in \tilde{\Omega}_1}$
is a family of generalized Floquet subspaces.
\end{itemize}
\end{theorem}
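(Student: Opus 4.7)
The plan is to construct $w(\omega)$ as a backward pullback limit that exploits the Hilbert/Birkhoff contraction implicit in (B3), and then to read off (1)--(4) from the construction. The key observation is that, by (B3), $U_{\omega}(1)$ sends every nonzero $u \in X^+$ into a slice of $C_{\mathbf{e}}$ of Hilbert (part) diameter at most $2\ln\varkappa(\omega)$; hence $U_{\omega}(1)$ is a strict Hilbert contraction on $C_{\mathbf{e}}$ with ratio $\tanh(\tfrac{1}{2}\ln\varkappa(\omega)) < 1$. Fix a probe $u_0 \in C_{\mathbf{e}}$ and set
\begin{equation*}
w_n(\omega) := \frac{U_{\theta_{-n}\omega}(n)u_0}{\norm{U_{\theta_{-n}\omega}(n)u_0}}, \qquad n \ge 1.
\end{equation*}
Each $w_n$ is measurable. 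Combining the one-step diameter bound with the iterated contraction along $\theta_{-n}\omega, \theta_{-n+1}\omega, \ldots, \theta_{-1}\omega$, and invoking Birkhoff's theorem (using $\lnplus{\ln\varkappa} \in L_1(\OFP)$) yields a $\theta$-invariant full-measure set $\tilde\Omega_1 \subset \Omega$ on which $\{w_n(\omega)\}_{n\ge 1}$ is Cauchy in the part metric, hence in norm. The limit $w(\omega) \in C_{\mathbf{e}} \cap \{\,\norm{\cdot} = 1\,\}$ is independent of $u_0$ and measurable. Property (1) follows by passing to the limit in $w_n(\theta_t\omega) = U_{\omega}(t) w_{n+t}(\omega)/\norm{U_\omega(t) w_{n+t}(\omega)}$.

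For (2), let $v_\omega$ be a nontrivial entire positive orbit. The identity $v_\omega(0) = U_{\theta_{-n}\omega}(n) v_\omega(-n)$ presents $v_\omega(0)/\norm{v_\omega(0)}$ as the pullback normalization of the probe $v_\omega(-n)/\norm{v_\omega(-n)}$, which after one further backward step lies in $C_{\mathbf{e}}$; by probe-independence of the pullback limit, $v_\omega(0) = \norm{v_\omega(0)}\, w(\omega)$. Applying this at $\theta_t\omega$ and using (1) yields the stated formulae for $v_\omega(t)$ at arbitrary $t \in \TT$. For (3), property (1) gives the multiplicative cocycle identity $\rho_{s+t}(\omega) = \rho_s(\theta_t\omega)\rho_t(\omega)$, while (B1)(i) forces $\lnplus{\rho_1} \in L_1(\OFP)$. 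Birkhoff's ergodic theorem applied to $\ln\rho_1$ taking values in $[-\infty,\infty)$ then yields $\tilde\lambda_1 = \int \ln\rho_1 \, d\PP \in [-\infty,\infty)$ as the common limit, with the forward-and-backward statement following from $\theta$-invariance of the ergodic mean and standard interpolation in the continuous-time case.

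For (4), assume (B0); then $X^+$ is reproducing and Lemma~\ref{lemma:Krein-Shmulyan}(3) writes any $u \in X \setminus \{0\}$ as $u = u^+ - u^-$ with $u^\pm \in X^+$ and $\norm{u^\pm} \le K\norm{u}$. By (B3), $U_\omega(1)u^\pm \le \varkappa(\omega)\beta^\pm \mathbf{e}$ for some $\beta^\pm \ge 0$; since $w(\theta_1\omega) \in C_{\mathbf{e}}$ there exists $\gamma(\omega) > 0$ with $\mathbf{e} \le \gamma(\omega) w(\theta_1\omega)$, so $U_\omega(1) u^\pm \le \varkappa(\omega)\beta^\pm \gamma(\omega) w(\theta_1\omega)$. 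Propagating forward by $U_{\theta_1\omega}(t-1)$ via (B2) and monotonicity of the norm gives
\begin{equation*}
\norm{U_\omega(t) u^\pm} \le \varkappa(\omega)\beta^\pm \gamma(\omega)\, \rho_{t-1}(\theta_1\omega),
\end{equation*}
whence $\limsup_{t \to \infty} (1/t) \ln \norm{U_\omega(t) u} \le \tilde\lambda_1$. The reverse bound $\tilde\lambda_1 \le \lambda_1$ is trivial from $w(\omega) \ne 0$, so $\tilde\lambda_1 = \lambda_1$, and $\{\tilde E_1(\omega)\}_{\omega \in \tilde\Omega_1}$ is a family of generalized principal Floquet subspaces. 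The main technical obstacle is quantifying the Hilbert contraction of $U_\omega(1)$ from the one-sided sandwich in (B3) and verifying almost-sure summability of the contraction factors under only $\lnplus{\ln\varkappa} \in L_1(\OFP)$; once this is established, measurability of $w$ is automatic, and the Birkhoff step accommodates the possibility $\tilde\lambda_1 = -\infty$ through its $[-\infty,\infty)$-valued formulation.
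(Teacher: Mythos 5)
Your proposal is correct. For parts (1)--(3) the paper simply cites Theorem~3.6 of \cite{MiShPart1} (Part~I) and does not reproduce the argument; your backward-pullback construction via the Birkhoff--Hopf projective contraction, with almost-sure Cauchyness extracted from Birkhoff's ergodic theorem applied to $\ln\tanh\bigl(\tfrac12\ln\varkappa\bigr)$ and from $\lnplus\ln\varkappa\in L_1(\OFP)$, is the standard way to prove the cited result, so there is nothing to compare beyond noting you unpacked a black box. (Two small points worth making explicit: when $\varkappa(\omega)=1$ the one-step image is a ray, and the Birkhoff integral equals $-\infty$, which only helps; and one should track that the leading factor $2\ln\varkappa(\theta_{-n}\omega)$ grows subexponentially, which follows from $\lnplus\ln\varkappa\in L_1$ by the standard $a_n/n\to 0$ lemma.)

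For part (4), however, you take a genuinely different route from the paper. The paper invokes Proposition~5.5(2) of \cite{MiShPart1} --- exponential convergence of the \emph{normalized directions} $U_\omega(t)u/\norm{U_\omega(t)u}$ to $w(\theta_t\omega)$ at a uniform rate $\tilde{\sigma}_1>0$ --- and only then combines it with part~(3) to get the $\limsup$ bound for $u\in X^+\setminus\{0\}$, extending to all $u\ne 0$ via $X=X^+-X^+$ and closing with Theorem~\ref{Oseledets-thm}. Your argument sidesteps the exponential-separation machinery entirely: from (B3) you get $U_\omega(1)u^\pm\le\varkappa(\omega)\beta^\pm\mathbf{e}$, then $\mathbf{e}\le\gamma(\omega)w(\theta_1\omega)$ since $w(\theta_1\omega)\in C_{\mathbf{e}}$, then push forward by $U_{\theta_1\omega}(t-1)$ using (B2) and monotonicity of the norm to bound $\norm{U_\omega(t)u^\pm}$ by a constant (in $t$) times $\rho_{t-1}(\theta_1\omega)$, and finish with part~(3). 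This is more elementary and self-contained (it needs only the one-step focusing inequality, positivity, and the monotone norm), gives the bound pointwise on all of $\tilde{\Omega}_1$ rather than merely a.e., and also lays the groundwork for recognizing where (B4)/(B3)$^*$ would be needed if one wanted the separation rate itself. The tradeoff is that the paper's route gets both the $\limsup$ bound and a quantitative rate $\tilde{\sigma}_1$ simultaneously, which is reused later in Theorem~\ref{separation-thm}; yours produces only the $\limsup$ bound. Both routes close the same way: your one-sentence ``the reverse bound is trivial'' should be expanded to say that, by Theorem~\ref{Oseledets-thm} applied in a finite-dimensional space, $\lambda_1$ is realized as $\lim\frac1t\ln\norm{U_\omega(t)u}$ for some nonzero $u$, so the pointwise $\limsup$ bound gives $\lambda_1\le\tilde\lambda_1$, while $\tilde\lambda_1\le\lambda_1$ is immediate; this is exactly what the paper's final ``apply Theorem~\ref{Oseledets-thm}'' accomplishes.
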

\begin{proof}
Parts (1), (2) and (3) just correspond to parts (1), (2) and (3)
of~\cite[Theorem 3.6]{MiShPart1}.

(4) By~\cite[Proposition~5.5(2)]{MiShPart1}, there exists
$\tilde{\sigma}_1 > 0$ such that
\begin{equation*}
\limsup_{\substack{t \to \infty \\ t \in \TT^{+}}} \frac{1}{t}
\ln{\left\lVert \frac{U_{\omega}(t)u}{\norm{U_{\omega}(t)u}} -
w(\theta_{t}\omega) \right\rVert} \le -\tilde{\sigma}_1
\end{equation*}
for any nonzero $u \in X^{+}$, $\PP$\nobreakdash-\hspace{0pt}a.e.\ on
$\Omega$, which, combined with (3), gives that
\begin{equation*}
\limsup_{\substack{t \to \infty \\ t \in \TT^{+}}} \frac{1}{t}
\ln{\norm{U_\omega(t)u}} \le \tilde{\lambda}_1
\end{equation*}
for any nonzero $u \in X^{+}$.  Since $X = X^{+} - X^{+}$, the above
inequality is satisfied for any nonzero $u \in X$.  It suffices now
to apply Theorem~\ref{Oseledets-thm}.
\end{proof}

\begin{remark}
\label{remark-lyp-exp}
{\em Theorem \ref{theorem-w}(4) is apparently a stronger version of
\cite[Theorem~3.6(5)]{MiShPart1}: In \cite[Theorem 3.6(5)]{MiShPart1}
$(X,X^{+})$ is assumed to be a Banach lattice, which implies that
$X^{+}$ is reproducing. We point out that \cite[Theorem
3.6(5)]{MiShPart1} in fact also holds if the assumption that
$(X,X^{+})$ is a Banach lattice is replaced by the assumption that
$X^{+}$ is reproducing.}
\end{remark}

If  \textup{(B1)(i)}, \textup{(B2)} and \textup{(B3)$^{*}$} are
satisfied, the counterpart of Theorem~\ref{theorem-w} for the dual
$\Phi^{*}$ states, among~others, the existence of an invariant
$\tilde{\Omega}^{*}_1 \subset \Omega$, $\PP(\tilde{\Omega}^{*}_1) =
1$, and an $(\mathfrak{F},
\mathfrak{B}(X^{*}))$\nobreakdash-\hspace{0pt}measurable function
$w^{*} \colon \tilde{\Omega}^{*}_1 \to X^{*}$, $w^{*}(\omega) \in
C_{\mathbf{e}^{*}}$ and $\norm{w^{*}(\omega)} = 1$ for all $\omega
\in \tilde{\Omega}^{*}_1$, satisfying $w^*(\theta_{-t}\omega) =
U_\omega^*(t)w^*(\omega)/\norm{U_\omega^*(t)w^*(\omega)}$ for all
$\omega \in \tilde{\Omega}^{*}_1$ and all $t \in \TT^{+}$.  For
$\omega \in \tilde{\Omega}^{*}_1$, define $\tilde{F}_1(\omega) :=
\{\, u \in X: \langle u, w^{*}(\omega) \rangle = 0 \,\}$.  Then
$\{\tilde{F}_1(\omega)\}_{\omega \in \tilde{\Omega}^{*}_1}$ is a
family of one\nobreakdash-\hspace{0pt}codimensional subspaces of $X$,
such that $U_{\omega}(t) \tilde{F}_1(\omega) \subset
\tilde{F}_1(\theta_{t}\omega)$ for any $\omega \in
\tilde{\Omega}^{*}_1$ and any $t \in \TT^{+}$.

The last theorem is about the existence of generalized exponential
separation, which partially follows from \cite[Theorem
3.8]{MiShPart1}.

\begin{theorem}[Generalized exponential separation]
\label{separation-thm}
Assume \textup{(B0)}, \textup{(B1)(i)}, \textup{(B2)}, and
\textup{(B4)}.  Then there is an invariant set $\tilde{\Omega}_0$,
$\PP(\tilde{\Omega}_0) = 1$, having the following properties.
\begin{itemize}
\item[{\rm (1)}]
The family $\{\tilde{P}(\omega)\}_{\omega \in \tilde{\Omega}_0}$
of projections associated with the measurable invariant
decomposition $\tilde{E}_1(\omega) \oplus \tilde{F}_1(\omega) =
X$ is tempered.

\item[{\rm (2)}]
$\tilde{F_1}(\omega) \cap X^{+} = \{0\}$ for any $\omega \in
\tilde{\Omega}_0$.
\item[{\rm (3)}]
For any $\omega \in \tilde{\Omega}_0$ and any $u \in X \setminus
\tilde{F}_1(\omega)$ \textup{(}in~particular, for any nonzero $u
\in X^{+}$\textup{)} there holds
\begin{equation*}
\lim_{\substack{t\to\infty \\ t \in \TT^{+}}} \frac{1}{t}
\ln{\norm{U_{\omega}(t)}} = \lim_{\substack{t\to\infty \\ t \in
\TT^{+}}} \frac{1}{t} \ln{\norm{U_{\omega}(t)u}} =
\tilde{\lambda}_1.
\end{equation*}
\item[{\rm (4)}]
There exist $\tilde{\sigma} \in (0, \infty]$ and
$\tilde{\lambda}_2 \in [-\infty,\infty)$, $\tilde{\lambda}_2 =
\tilde{\lambda}_1 - \tilde{\sigma}$, such that
\begin{equation*}
\lim_{\substack{t\to\infty \\ t \in \TT^{+}}} \frac{1}{t}
\ln{\frac{\norm{U_{\omega}(t)|_{\tilde{F}_1(\omega)}}}
{\norm{U_{\omega}(t) w(\omega)}}} = - \tilde{\sigma}
\end{equation*}
and
\begin{equation*}
\lim_{\substack{t\to\infty \\ t \in \TT^{+}}} \frac{1}{t}
\ln{\norm{U_{\omega}(t)|_{\tilde{F}_1(\omega)}}} =
\tilde{\lambda}_2
\end{equation*}
for each $\omega \in \tilde{\Omega}_0$.  Hence $\Phi$ admits a
generalized exponential separation.
\item[{\rm (5)}]
Assume that (ii) in Theorem~\ref{Oseledets-thm} holds.  Then
$\tilde{\lambda}_2 = \hat{\lambda}_2 < \tilde{\lambda}_1$ and
$\hat{F}_1(\omega) = \tilde{F}_1(\omega)$ for
$\PP$\nobreakdash-\hspace{0pt}a.e.\ $\omega \in \Omega_{0}$. If,
moreover, \textup{(B1)(ii)} holds then $E_1(\omega) =
\tilde{E}_1(\omega)$ for $\PP$\nobreakdash-\hspace{0pt}a.e.\
$\omega \in \Omega_{0}$.

\item[{\rm (6)}]
If \textup{(B5)} or \textup{(B5)$^*$} holds, then
$\tilde{\lambda}_1 > -\infty$.
\end{itemize}
\end{theorem}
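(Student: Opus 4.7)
The plan is to produce the codimension-one family $\tilde F_1$ from the principal Floquet data of the dual cocycle $\Phi^*$, verify (2), (1), (3) by direct estimates exploiting the transversality clause $\langle\mathbf{e},\mathbf{e}^*\rangle>0$ of (B4), derive the strict exponential separation (4)--(5) following the argument of \cite[Theorem~3.8]{MiShPart1}, and deduce (6) from a Birkhoff lower bound. First, since (B4) implies (B3), Theorem~\ref{theorem-w} applies to $\Phi$ and yields $\tilde E_1(\omega)=\spanned\{w(\omega)\}$ with $w(\omega)\in C_{\mathbf{e}}$, $\|w(\omega)\|=1$, and the exponent $\tilde\lambda_1$. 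Since $\Phi^*$ preserves $(X^*)^+$ and (B4) implies (B3)$^*$, the dual version of Theorem~\ref{theorem-w} recorded just before the statement produces a measurable $w^*(\omega)\in C_{\mathbf{e}^*}$ with $\|w^*(\omega)\|=1$ and $U_\omega^*(t)w^*(\omega)=\|U_\omega^*(t)w^*(\omega)\|\,w^*(\theta_{-t}\omega)$. I set
\begin{equation*}
\tilde F_1(\omega):=\{\,u\in X:\langle u,w^*(\omega)\rangle=0\,\};
\end{equation*}
measurability and forward invariance $U_\omega(t)\tilde F_1(\omega)\subset\tilde F_1(\theta_t\omega)$ are immediate from these formulas.

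Next I address (2), (1), (3). For (2), let $0\ne u\in X^+$. By (B3), $U_\omega(1)u\ge\beta(\omega,u)\mathbf{e}$, and since $w^*(\theta_1\omega)\in C_{\mathbf{e}^*}$ there is $\underline\alpha(\theta_1\omega)>0$ with $w^*(\theta_1\omega)\ge\underline\alpha(\theta_1\omega)\mathbf{e}^*$, so
\begin{equation*}
\langle U_\omega(1)u,w^*(\theta_1\omega)\rangle\ge\beta(\omega,u)\,\underline\alpha(\theta_1\omega)\,\langle\mathbf{e},\mathbf{e}^*\rangle>0,
\end{equation*}
using the transversality clause of (B4). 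Unwinding via \eqref{dual-definition} and the dual invariance of $w^*$ yields $\langle u,w^*(\omega)\rangle>0$, so $\tilde F_1(\omega)\cap X^+=\{0\}$. The same computation at $u=w(\omega)$ gives $\langle w(\omega),w^*(\omega)\rangle>0$, hence $X=\tilde E_1(\omega)\oplus\tilde F_1(\omega)$, and the associated projection $\tilde P(\omega)$ onto $\tilde F_1(\omega)$ satisfies $\|\tilde P(\omega)\|\le 1+1/\langle w(\omega),w^*(\omega)\rangle$. Temperedness (part (1)) then reduces to $t^{-1}\ln\langle w(\theta_t\omega),w^*(\theta_t\omega)\rangle\to 0$, which follows from Birkhoff applied to the integrable cocycles $\ln\|U_\omega(1)w(\omega)\|$ and $\ln\|U_\omega^*(1)w^*(\omega)\|$ exactly as in \cite[Theorem~3.8]{MiShPart1}. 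Part (3) drops out from Theorem~\ref{theorem-w}(4) together with Theorem~\ref{Oseledets-thm}: for $u\notin\tilde F_1(\omega)$ the component $(\Id-\tilde P(\omega))u\in\tilde E_1(\omega)\setminus\{0\}$ grows at rate $\tilde\lambda_1$, while $\|U_\omega(t)u\|\le\|U_\omega(t)\|\,\|u\|$ and $t^{-1}\ln\|U_\omega(t)\|\to\lambda_1=\tilde\lambda_1$.

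For (4)--(5), the main substance, I follow \cite[Theorem~3.8]{MiShPart1}. For any $u\in\tilde F_1(\omega)$, Lemma~\ref{lemma:Krein-Shmulyan}(3)---here (B0), i.e.\ $X^+$ reproducing, is essential---writes $u=u^+-u^-$ with $u^\pm\in X^+$ and $\|u^\pm\|\le K\|u\|$, so
\begin{equation*}
\frac{\|U_\omega(t)u\|}{\|U_\omega(t)w(\omega)\|}\le K\!\left(\frac{\|U_\omega(t)u^+\|}{\|U_\omega(t)w(\omega)\|}+\frac{\|U_\omega(t)u^-\|}{\|U_\omega(t)w(\omega)\|}\right).
\end{equation*}
Each ratio on the right is controlled by a Hilbert-projective-metric contraction under (B3) and (B3)$^*$, producing a per-step contraction factor that depends measurably on $\omega$ through $\varkappa(\omega)$ and $\varkappa^*(\omega)$; integrability $\ln\varkappa,\ln\varkappa^*\in L_1(\OFP)$ from (B4) lets Birkhoff convert the product of per-step factors into an ergodic average $-\tilde\sigma\in[-\infty,0)$. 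This yields the first limit in (4); the second and $\tilde\lambda_2=\tilde\lambda_1-\tilde\sigma$ then follow by combining with part~(3). Part (5) is forced by the uniqueness clause of Theorem~\ref{Oseledets-thm}: strict positivity of $\tilde\sigma$ places us in case (ii), and the codimension-one invariant family $\tilde F_1(\omega)$ with exponent $\tilde\lambda_2<\tilde\lambda_1$ must coincide with $\hat F_1(\omega)$ $\PP$-a.s.; under the additional (B1)(ii), the symmetric argument on the backward dynamics identifies $E_1(\omega)$ with $\tilde E_1(\omega)$. I expect this part to be the main obstacle, since it requires the full strength of (B4), marrying primal and dual focusing rather than invoking either alone.

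Finally, for (6), under (B5), iterating $U_\omega(1)\bar{\mathbf{e}}\ge\nu(\omega)\bar{\mathbf{e}}$ through (B2) yields $U_\omega(n)\bar{\mathbf{e}}\ge\bigl(\prod_{k=0}^{n-1}\nu(\theta_k\omega)\bigr)\bar{\mathbf{e}}$; monotonicity of the renormed norm and $\|\bar{\mathbf{e}}\|=1$ give
\begin{equation*}
\liminf_{n\to\infty}\frac{1}{n}\ln\|U_\omega(n)\bar{\mathbf{e}}\|\ge\lim_{n\to\infty}\frac{1}{n}\sum_{k=0}^{n-1}\ln\nu(\theta_k\omega)=\int_\Omega\ln\nu\,d\PP>-\infty
\end{equation*}
$\PP$-a.s., by Birkhoff applied to $\ln\nu$ (integrable thanks to $\lnminus\nu\in L_1(\OFP)$). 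Hence $\tilde\lambda_1=\lambda_1\ge\int_\Omega\ln\nu\,d\PP>-\infty$ via Theorem~\ref{Oseledets-thm}. The case (B5)$^*$ is symmetric, applied to $\Phi^*$; in finite dimensions the top Lyapunov exponent of $\Phi^*$ coincides with that of $\Phi$ (since $\|U_\omega^*(t)\|=\|U_{\theta_{-t}\omega}(t)\|$ and $\theta$ is measure-preserving and ergodic), so again $\tilde\lambda_1>-\infty$.
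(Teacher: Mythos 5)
Your approach matches the paper's: both defer the substantive work to \cite[Theorem~3.8]{MiShPart1}, and both correctly identify that the only real adaptation needed is to replace the Banach lattice hypothesis with the reproducing-cone hypothesis via Lemma~\ref{lemma:Krein-Shmulyan}(3), which is exactly where (B0) enters. Your treatment of (1)--(4) and (6) is essentially a spelled-out version of the same argument, and your handling of the temperedness of $\tilde P$ via $\langle w(\omega),w^*(\omega)\rangle$ and Birkhoff, as well as the (6) computation, are correct.

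One inaccuracy in your treatment of (5): you assert that ``strict positivity of $\tilde\sigma$ places us in case (ii)'' of Theorem~\ref{Oseledets-thm}. That implication is false in general. The paper's own example in Subsection~\ref{subsection:example} has $\tilde\sigma = 2 > 0$ yet $\tilde\lambda_1 = \tilde\lambda_2 = -\infty$, so case (i) of Theorem~\ref{Oseledets-thm} obtains and the Oseledets decomposition is trivial even though the generalized exponential separation is nontrivial. Indeed, this possibility is precisely the point the authors emphasize in the introduction. The correct reading of (5) is that case (ii) is a \emph{hypothesis}, not a consequence of $\tilde\sigma > 0$; once you assume (ii), the uniqueness of the Oseledets filtration gives $\hat F_1 = \tilde F_1$ a.e.\ and $\tilde\lambda_2 = \hat\lambda_2 < \tilde\lambda_1$, and it is worth noting (as the paper does) that this identification does not use (B1)(ii) --- injectivity is only needed for the second part of (5) identifying $E_1$ with $\tilde E_1$.
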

\begin{proof}
The proofs of parts (1) through (4) and of (6) go along the lines of
proofs of the corresponding parts of~\cite[Theorem~3.8]{MiShPart1},
the only difference being that in the present paper we do not assume
that $(X, X^{+})$ is a Banach lattice.  However, by
Lemma~\ref{lemma:Krein-Shmulyan}(3), there exists $K > 0$ such that
any $u \in X$ with $\norm{u} = 1$ can be written as $u^{+} - u^{-}$,
with $u^{+}, u^{-} \in X^{+}$, $\norm{u^{+}} \le K$, $\norm{u^{-}}
\le K$ (cf.\ the proof of~\cite[Proposition~5.11]{MiShPart1}, where
this property is needed).

Regarding the first part of~(5), observe that nowhere in the proof
in~\cite{MiShPart1} of the fact that $\hat{F}_1(\omega) =
\tilde{F}_1(\omega)$ $\PP$\nobreakdash-\hspace{0pt}a.e.\ the
injectivity ((B1)(ii)) is needed.
\end{proof}

\begin{remark}
\label{remark-exp-sep}
{\em Theorem \ref{separation-thm} is apparently a stronger version of
\cite[Theorem~3.8]{MiShPart1}: It is assumed in \cite[Theorem
3.8]{MiShPart1} that $(X,X^{+})$ is a Banach lattice, which implies
that $X^{+}$ is reproducing.  We point out that \cite[Theorem
3.8]{MiShPart1} also holds if the assumption that $(X,X^{+})$ is a
Banach lattice is replaced by the assumption that $X^{+}$ is
reproducing.}
\end{remark}

\section{Positive Matrix Random Dynamical Systems}
\label{section-matrix}
In the present section we consider applications of the general
results stated in Section~\ref{general-theory} to
discrete\nobreakdash-\hspace{0pt}time
finite\nobreakdash-\hspace{0pt}dimensional random dynamical systems
arising from positive random matrices.

Throughout this section $X$ stands for the
$N$\nobreakdash-\hspace{0pt}dimensional real vector space of column
vectors, with $N \ge 2$.

We assume that the dual space $X^{*}$ consists of column vectors,
too, and that the duality pairing between $X$ and $X^{*}$ is given by
the standard inner product (denoted also by the symbol $\langle
\cdot, \cdot \rangle$).  Consequently, $\langle u, u^{*} \rangle =
u^{\top} u^{*}$, for any $u \in X$, $u^{*} \in X^{*}$, where
$^{\top}$ denotes the matrix transpose.

$X^{+}$ is the standard {\em nonnegative cone\/}, $X^{+} = \{\, u =
(u_1, \dots, u_{N})^{\top}: u_i \ge 0$ for all $1 \le i \le N \,\}$,
and $X^{++}$ is its interior, $X^{++} = \{\, u = (u_1, \dots,
u_{N})^{\top}: u_i > 0$ for all $1 \le i \le N \,\}$.

Denote by $(\mathbf{e}_1, \dots, \mathbf{e}_N)$ the standard basis of
$X$.

\smallskip
We will identify a discrete\nobreakdash-\hspace{0pt}time \mlsps\
$\Phi = \allowbreak ((U_{\omega}(n))_{\omega \in \Omega, n \in
\ZZ^{+}}, \allowbreak (\theta_n)_{n \in \ZZ})$ on $X$ covering a
metric dynamical system $(\theta_{n})_{n \in \ZZ}$ with a $(\PP,
\mathfrak{B}(\RR^{N \times N}))$-measurable family
$(S(\omega))_{\omega \in \Omega}$ of $N \times N$ matrices
\begin{equation}
\label{finite-dim-matrix}
S(\omega) =
\left(\begin{matrix}
s_{11}(\omega)&s_{12}(\omega)&\cdots&s_{1N}(\omega) \\
s_{21}(\omega)&s_{22}(\omega)&\cdots&s_{2N}(\omega) \\
\vdots & \vdots & \ddots& \vdots \\
s_{N1}(\omega)&s_{N2}(\omega)&\cdots&s_{NN}(\omega)
\end{matrix}\right),
\end{equation}
that is, $U_{\omega}(1) = S(\omega)$.  Also, we will write $\theta$
instead~of $\theta_{1}$.

We will use the notation
\begin{equation*}
S^{(n)}(\omega) := S(\theta^{n-1}\omega) S(\theta^{n-2}\omega) \cdots
S(\omega), \quad \omega \in \Omega, \ n \ge 1
\end{equation*}
(obviously $S^{(0)}(\omega)$ equals the identity matrix).
Eq.~\eqref{eq-cocycle} takes now the form
\begin{equation}
\label{eq-cocycle-matrix}
S^{(m+n)}(\omega) = S^{(n)}(\theta^{m}\omega) S^{(m)}(\omega), \quad
\omega \in \Omega, \ m, n \in \ZZ^{+}.
\end{equation}
For the dual system $(S^{*}(\omega))_{\omega \in \Omega}$ it follows
from~\eqref{dual-definition} that $S^{*}(\omega) =
(S(\theta^{-1}\omega))^{\top}$.

\medskip
Let
\begin{equation*}
m_{c,i}(\omega) := \min_{1 \leq j\leq N}s_{ji}(\omega), \quad
M_{c,i}(\omega) = \max_{1 \le j \leq N} s_{ji}(\omega),
\end{equation*}
and
\begin{equation*}
m_{r,i}(\omega) := \min_{1\leq j\leq N} s_{ij}(\omega), \quad
M_{r,i}(\omega) := \max_{1\leq j\leq N} s_{ij}(\omega)
\end{equation*}
for $\omega \in \Omega$ and $i = 1, 2, \dots, N$.  Further, let
\begin{equation*}
m_r(\omega) := \min_{1\leq i\leq N} \sum_{j=1}^N s_{ij}(\omega),
\quad m_c(\omega) := \min_{1\leq j\leq N} \sum_{i=1}^N
s_{ij}(\omega).
\end{equation*}
Finally, let
\begin{equation*}
m(\omega) := \min_{1 \le i,j \le N} s_{ij}(\omega), \quad M(\omega)
:= \max_{1 \le i,j \le N} s_{ij}(\omega).
\end{equation*}

We state the following standing assumptions.

\medskip
\noindent \textbf{(D1)} (Positivity, injectivity and integrability)
{\em $(\theta^{n})_{n \in \ZZ}$ is an ergodic metric dynamical system
on $\OFP$, and $S = (s_{ij})_{i,j=1}^{N} \colon \Omega \to \RR^{N
\times N}$ is an $(\mathfrak{F}, \mathfrak{B}(\RR^{N \times
N}))$-measurable matrix function such that
\begin{itemize}
\item[{\rm (i)}]
$s_{ij}(\omega) \ge 0$ for all $\omega \in \Omega$ and $i, j =
1,2,\dots,N$.

\item[{\rm (ii)}]
For each $\omega \in \Omega$, $S(\omega)$ is a
non\nobreakdash-\hspace{0pt}singular matrix.

\item[{\rm (iii)}]
(i) holds and $\lnplus{M} \in L_1(\OFP)$.
\end{itemize}
}

\noindent \textbf{(D2)} (Focusing) {\em
\begin{itemize}
\item[{\rm (i)}]
$s_{ij}(\omega) > 0$ for all $\omega \in \Omega$ and $i,j =
1,2,\dots,N$ and $\lnplus(\ln M_{c,i} - \ln m_{c,i}) \in
L_1(\OFP)$ for $i = 1,2,\dots,N$.

\item[{\rm (ii)}]
$s_{ij}(\omega) > 0$ for all $\omega \in \Omega$ and $i, j =
1,2,\dots,N$ and $\lnplus(\ln M_{r,i} - \ln m_{r,i}) \in
L_1(\OFP)$ for $i = 1,2,\dots,N$.

\item[{\rm (iii)}]
$s_{ij}(\omega) > 0$ for all $\omega \in \Omega$ and $i, j =
1,2,\dots,N$ and $\ln M_{c,i} - \ln m_{c,i} \in L_1(\OFP)$, $\ln
M_{r,i} - \ln m_{r,i} \in L_1(\OFP)$ for $i = 1,2,\dots,N$.
\end{itemize}
}

\noindent \textbf{(D3)} (Strong positivity in one direction) {\em
\begin{itemize}
\item[{\rm (i)}]
$m_r(\omega) > 0$ for each $\omega \in \Omega$ and $\ln^{-}{m_r}
\in L_1(\OFP)$

\item[{\rm (ii)}]
$m_c(\omega) > 0$ for each $\omega \in \Omega$ and $\ln^{-}{m_c}
\in L_1(\OFP)$.
\end{itemize}}

\begin{proposition}[Positivity, injectivity and
integrability]
\label{matrix-prop1}
\begin{itemize}
\item[{\rm (1)}]
\textup{(B2)} holds if and only if \textup{(D1)(i)} holds.
\item[{\rm (2)}]
\textup{(B1)(ii)} holds if and only if \textup{(D1)(ii)} holds.
\item[{\rm (3)}]
Assume \textup{(D1)(i)}.  Then \textup{(B1)(i)} holds if and only
if \textup{(D1)(iii)} holds.
\end{itemize}
\end{proposition}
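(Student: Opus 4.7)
The plan is to verify each of the three equivalences as a direct translation of the abstract hypothesis on $\Phi$ into the entrywise language of $S(\omega)$, using $U_{\omega}(1) = S(\omega)$ together with the identification of $X^{+}$ as the standard orthant with basis $(\mathbf{e}_{1},\dots,\mathbf{e}_{N})$. Parts (1) and (2) reduce to elementary characterizations of cone\nobreakdash-\hspace{0pt}preservation and injectivity for a single matrix, extended to all $n \in \ZZ^{+}$ by composition; part (3) follows from a two\nobreakdash-\hspace{0pt}sided comparison between the operator norm $\norm{S(\omega)}$ and the largest entry $M(\omega)$.

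For part (1), I would first observe that (B2) at time $1$ is equivalent to $S(\omega)X^{+} \subset X^{+}$ for every $\omega$, since $u_{1} \le u_{2} \iff u_{2} - u_{1} \in X^{+}$ and $X^{+}$ is closed under scalar multiplication; the property at $t=1$ then propagates to all $n \in \ZZ^{+}$ by a straightforward induction using the cocycle relation~\eqref{eq-cocycle-matrix}, since a composition of cone\nobreakdash-\hspace{0pt}preserving maps is cone\nobreakdash-\hspace{0pt}preserving. Cone\nobreakdash-\hspace{0pt}preservation in turn is equivalent to nonnegativity of the entries: testing on basis vectors yields $S(\omega)\mathbf{e}_{j} = (s_{1j}(\omega),\dots,s_{Nj}(\omega))^{\top} \in X^{+}$, forcing $s_{ij}(\omega) \ge 0$; conversely nonnegative entries immediately give $S(\omega)u \in X^{+}$ whenever $u \in X^{+}$, since such $u$ is a nonnegative combination of the $\mathbf{e}_{j}$. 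Part (2) is then immediate linear algebra: $S(\omega)$ is an endomorphism of a finite\nobreakdash-\hspace{0pt}dimensional space, and for such an endomorphism injectivity is equivalent to nonsingularity.

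For part (3), since all norms on the finite\nobreakdash-\hspace{0pt}dimensional space $X$ are equivalent with constants independent of $\omega$, the $L_{1}(\OFP)$\nobreakdash-\hspace{0pt}class of $\lnplus{\norm{S(\cdot)}}$ is independent of the chosen norm; I would work with the $\ell^{\infty}$\nobreakdash-\hspace{0pt}operator norm, for which, under (D1)(i),
\[
\norm{S(\omega)} = \max_{1 \le i \le N} \sum_{j=1}^{N} s_{ij}(\omega).
\]
This yields the two\nobreakdash-\hspace{0pt}sided bound $M(\omega) \le \norm{S(\omega)} \le N\, M(\omega)$, so that $\lnplus{M(\omega)} \le \lnplus{\norm{S(\omega)}} \le \ln N + \lnplus{M(\omega)}$, and the two functions differ by a bounded quantity. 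Hence one lies in $L_{1}(\OFP)$ iff the other does, proving the equivalence of (B1)(i) (in its discrete\nobreakdash-\hspace{0pt}time form) with (D1)(iii).

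The argument is essentially bookkeeping and I do not expect any serious obstacle; the only mildly delicate point is the passage in~(1) from cone\nobreakdash-\hspace{0pt}preservation at time $1$ to cone\nobreakdash-\hspace{0pt}preservation at all $n \in \ZZ^{+}$, but this is immediate from the cocycle identity and the fact that (D1)(i) applies to every $\theta^{k}\omega$.
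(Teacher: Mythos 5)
Your proposal is correct and follows essentially the same route as the paper: parts (1) and (2) are treated as elementary (the paper just says ``obvious''), and part (3) rests on a two-sided comparison between $\norm{S(\omega)}$ and $M(\omega)$ (the paper writes $M(\omega) \le \norm{S(\omega)} \le \sqrt{N}\,M(\omega)$; you use the $\ell^{\infty}$-operator norm and get the constant $N$ instead of $\sqrt{N}$, which is immaterial since only the $L_{1}$-class of $\lnplus$ is at issue).
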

\begin{proof}
(1) and (2) are obvious.

(3) Since $M(\omega) \le \norm{S(\omega)} \le \sqrt{N} M(\omega)$,
the satisfaction of (B1)(i) is equivalent to $\lnplus{M} \in
L_1(\OFP)$.
\end{proof}

\begin{proposition}[Focusing]
\label{matrix-prop2}
Assume that \textup{(D1)(i)} holds.
\begin{itemize}
\item[{\rm (1)}]
\textup{(B3)} holds with $\mathbf{e} \in X^{++}$ if and only if
\textup{(D2)(i)} holds.
\item[{\rm (2)}]
\textup{(B3)$^*$} holds with $\mathbf{e}^{*} \in X^{++}$ if and
only if  \textup{(D2)(ii)} holds.
\item[{\rm (3)}]
\textup{(B4)} holds with $\mathbf{e}, \mathbf{e}^{*} \in X^{++}$
if and only if \textup{(D2)(iii)} holds.
\end{itemize}
\end{proposition}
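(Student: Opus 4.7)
The plan is to reduce both (B3) and (B3)$^*$ to estimates on the ``spread ratio'' $\max_j v_j/\min_j v_j$ of the image vectors $v = S(\omega)u$ and $v = S^*(\omega)u^*$. The key observation is that, when $\mathbf{e} \in X^{++}$ with $\norm{\mathbf{e}} = 1$, the sandwich $\beta \mathbf{e} \le v \le \varkappa(\omega)\beta\mathbf{e}$ for some $\beta > 0$ is equivalent, up to the multiplicative constant $\max_j \mathbf{e}_j/\min_j \mathbf{e}_j$, to the assertion that $v$ has strictly positive entries and spread ratio bounded by $\varkappa(\omega)$. I will take both $\mathbf{e}$ and $\mathbf{e}^*$ to be the normalized all-ones vector, so this geometric constant equals~$1$ and drops out.

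For (1), the forward direction will use the elementary inequality $\sum_k a_k x_k / \sum_k b_k x_k \le \max_k a_k/b_k$ (valid for $a_k, b_k \ge 0$, $b_k>0$, $x_k \ge 0$ with $\sum b_k x_k > 0$) combined with the bounds $\max_j (S(\omega)u)_j \le \sum_k M_{c,k}(\omega) u_k$ and $\min_j (S(\omega)u)_j \ge \sum_k m_{c,k}(\omega) u_k$. This shows the spread of $S(\omega)u$ is controlled by $\varkappa(\omega) := \max_k M_{c,k}(\omega)/m_{c,k}(\omega)$; the integrability $\lnplus\ln\varkappa \in L_1(\OFP)$ then reduces through $\lnplus(\max_k a_k) \le \sum_k \lnplus a_k$ to exactly~(D2)(i). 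For the reverse direction I will test (B3) against the standard basis vectors $u = \mathbf{e}_i$: the vector $S(\omega)\mathbf{e}_i$ is the $i$-th column of $S(\omega)$, and the sandwich $\beta \mathbf{e} \le S(\omega)\mathbf{e}_i \le \varkappa(\omega)\beta\mathbf{e}$ simultaneously forces $s_{ji}(\omega) > 0$ for all $j$ and $\ln M_{c,i}(\omega) - \ln m_{c,i}(\omega) \le \ln \varkappa(\omega) + C$ with $C = \ln(\max_j \mathbf{e}_j / \min_j \mathbf{e}_j)$; applying $\lnplus$ to both sides recovers the integrability clause of~(D2)(i).

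Part (2) is the exact analogue once one invokes the identity $U_\omega^*(1) = S(\theta^{-1}\omega)^\top$, which swaps ``column'' and ``row'' quantities and composes with the measure-preserving shift $\theta^{-1}$; invariance of $\PP$ absorbs the shift, so the $L^1$ conditions at $\omega$ and at $\theta^{-1}\omega$ coincide. For (3) I will combine (1) and (2) and observe two extra points: $\langle \mathbf{e}, \mathbf{e}^*\rangle > 0$ is automatic since both vectors lie in $X^{++}$; and the upgraded integrability $\ln \varkappa, \ln\varkappa^* \in L_1(\OFP)$ (no outer $\lnplus$) follows from the nonnegativity of these functions together with the pointwise bound $\ln\varkappa(\omega) \le \sum_k(\ln M_{c,k} - \ln m_{c,k})(\omega)$ supplied by~(D2)(iii), and symmetrically for $\varkappa^*$. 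The reverse implication in (3) runs through the reverse estimates from (1) and (2), now substituting the stronger $L^1$ hypothesis on $\ln\varkappa$ and $\ln\varkappa^*$ for the weaker $\lnplus\ln$ hypothesis.

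No step is genuinely hard; the only point requiring care is the dual computation in~(2), where one must correctly identify $U_\omega^*(1)$ with the transpose of $S(\theta^{-1}\omega)$ and verify that the focusing estimates are preserved under the measure-preserving shift by $\theta^{-1}$.
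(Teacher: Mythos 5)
Your proposal is correct and takes essentially the same route as the paper. In the direction \textup{(B3)} $\Rightarrow$ \textup{(D2)(i)} you test the sandwich against $u = \mathbf{e}_i$ and control the spread of the $i$-th column via the constant $\max_j \mathbf{e}_j / \min_j \mathbf{e}_j$, which is exactly the paper's $\overline{\delta}/\underline{\delta}$; in the converse you take $\mathbf{e}$ to be the normalized all-ones vector and choose $\varkappa(\omega) = \max_k M_{c,k}(\omega)/m_{c,k}(\omega)$ (the paper uses the slightly wasteful $N\max_k M_{c,k}/m_{c,k}$, but the factor of $N$ is harmless). Your handling of parts (2)--(3) matches the paper's as well: the dual reduces to a transpose plus the measure-preserving shift $\theta^{-1}$, and the upgrade from $\lnplus\ln\varkappa$ to $\ln\varkappa$ integrability uses that $\ln\varkappa \ge 0$ together with the pointwise bound from \textup{(D2)(iii)}.
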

\begin{proof}
(1) Suppose that (B3) is satisfied with some $\mathbf{e} \in X^{++}$.
In~particular, there holds
\begin{equation*}
\beta(\omega, \mathbf{e}_i) \mathbf{e} \le S(\omega) \mathbf{e}_{i}
\le \varkappa(\omega) \beta(\omega, \mathbf{e}_i) \mathbf{e}.
\end{equation*}
Note that there are positive reals $\underline{\delta} \le
\overline{\delta}$ such that
\begin{equation*}
\underline{\delta} (1,1,\dots,1)^{\top} \le \mathbf{e} \le
\overline{\delta} (1,1,\dots,1)^{\top},
\end{equation*}
consequently
\begin{equation*}
\underline{\delta} \beta(\omega, \mathbf{e}_i)(1,1,\dots,1)^{\top}
\le S(\omega) \mathbf{e}_i \le \overline{\delta} \varkappa(\omega)
\beta(\omega, \mathbf{e}_i)(1,1,\dots,1)^{\top}.
\end{equation*}
Since
\begin{equation*}
S(\omega) \mathbf{e}_i =
(s_{1i}(\omega),s_{2i}(\omega),\dots,s_{Ni}(\omega))^{\top},
\end{equation*}
one has
\begin{equation*}
\beta(\omega,\mathbf{e}_i) \le
\frac{m_{c,i}(\omega)}{\underline{\delta}}, \quad \varkappa(\omega)
\beta(\omega,\mathbf{e}_i) \ge
\frac{M_{c,i}(\omega)}{\overline{\delta}},
\end{equation*}
hence
\begin{equation*}
\frac{M_{c,i}(\omega)}{m_{c,i}(\omega)} \le
\frac{\overline{\delta}}{\underline{\delta}}\varkappa(\omega) \quad
\forall i = 1,2,\dots,N.
\end{equation*}
It then follows that $\lnplus(\ln{M_{c,i}} - \ln{m_{c,i}}) \in
L_1(\OFP)$ for $i = 1, 2,\dots, N$.

Conversely, suppose that $s_{ij}(\omega) > 0$ for $i, j = 1, \dots,
N$ and $\lnplus(\ln{M_{c,i}} - \ln{m_{c,i}}) \in L_1(\OFP)$ for $i =
1, \dots, N$. Note that for any $u = (u_1, \dots, \allowbreak
u_n)^{\top} \in X^{+} \setminus \{0\}$, $u = u_1 \mathbf{e}_1  +
\dots + u_N \mathbf{e}_N$. Hence
\begin{equation*}
u_1 m_{c,1}(\omega) + \dots + u_N m_{c,N}(\omega) \le
\big(S(\omega)u\big)_i \le u_1 M_{c,1}(\omega) + \dots + u_N
M_{c,N}(\omega),\ 1 \le i \le N.
\end{equation*}
Put $\mathbf{e} = (1, \dots, 1)^{\top}/\sqrt{N}$.  Let
\begin{equation*}
\beta(\omega,u) = \sqrt{N} \left(u_1 m_{c,1}(\omega) + \dots + u_N
m_{c,N}(\omega)\right),
\end{equation*}
and
\begin{equation*}
\varkappa(\omega) = N \max_{1 \le i \le N}
\frac{M_{c,i}(\omega)}{m_{c,i}(\omega)}.
\end{equation*}
Then $\varkappa$ is measurable and
\begin{equation*}
\beta(\omega,u) \mathbf{e} \le S(\omega)u \le \varkappa(\omega)
\beta(\omega,u) \mathbf{e}
\end{equation*}
and
\begin{equation*}
\lnplus{\ln{\varkappa(\omega)}} \le \ln{N} + \max\limits_{1 \le i \le
N} \lnplus(\ln{M_{c,i}(\omega)} - \ln{m_{c,i}(\omega)}).
\end{equation*}
Therefore $\lnplus{\ln{\varkappa}} \in L_1(\OFP)$.

(2) It can be proved by arguments similar to those in the proof of
(1).

(3) First, assume that (B4) holds.

Copying the proof of (1) we obtain that $s_{ij}(\omega) > 0$ for all
$\omega \in \Omega$, $i, j = 1, 2, \dots, N$, and
\begin{equation*}
\frac{M_{c,i}(\omega)}{m_{c,i}(\omega)} \le
\frac{\overline{\delta}}{\underline{\delta}} \varkappa(\omega) \quad
\forall \ \omega \in \Omega,\  i = 1,2,\dots, N.
\end{equation*}
Hence
\begin{equation*}
\ln{M_{c,i}(\omega)} - \ln{m_{c,i}(\omega)} \le
\ln{\overline{\delta}} - \ln{\underline{\delta}} +
\ln{\varkappa(\omega)} \quad \forall \ \omega \in \Omega,\ i =
1,2,\dots, N.
\end{equation*}
Therefore, $\ln{M_{c,i}} - \ln{m_{c,i}} \in L_1(\OFP)$.

Similarly, it can be proved that $\ln{M_{r,i}} - \ln{m_{r,i}} \in
L_1(\OFP)$. Therefore (D2)(iii) holds.

Next, we assume that (D2)(iii) holds. Copying the proof of (1) we see
that (B3) holds with $\mathbf{e} = (1, \dots, 1)^{\top}/\sqrt{N}$,
and
\begin{equation*}
\varkappa(\omega) = N \max_{1 \le i \le N}
\frac{M_{c,i}(\omega)}{m_{c,i}(\omega)}
\end{equation*}
Therefore,
\begin{equation*}
\ln{\varkappa(\omega)} \le \ln{N} + \max_{1 \le i \le N}
(\ln{M_{c,i}(\omega)} - \ln{m_{c,i}(\omega)}).
\end{equation*}
It then follows that $\ln{\varkappa} \in L_1(\OFP)$.

Similarly, it can be proved that (B3)$^*$ holds with $\mathbf{e}^{*}
= \mathbf{e}$ and $\ln{\varkappa} \in L_1(\OFP)$.

By $\mathbf{e^*} = \mathbf{e}$, $\langle \mathbf{e}, \mathbf{e^*}
\rangle > 0$. Therefore (B4) holds.
\end{proof}

\begin{proposition} [Strong positivity in one direction]
\label{matrix-prop3}
Assume that $s_{ij}(\omega) > 0$ for all $i, j = 1, 2, \dots, N$ and
$\omega \in \Omega$.
\begin{itemize}
\item[{\rm (1)}]
\textup{(B5)} holds with $\mathbf{\overline{e}} \in X^{++}$ if
and only if \textup{(D3)(i)} holds.
\item[{\rm (2)}]
\textup{(B5)$^*$} holds with $\mathbf{\overline{e}}^{*} \in
X^{++}$ if and only if \textup{(D3)(ii)} holds.
\end{itemize}
\end{proposition}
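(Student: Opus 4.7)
My plan is to mimic the strategy already used in the proofs of Proposition \ref{matrix-prop1} and Proposition \ref{matrix-prop2}: reduce both halves of each equivalence to componentwise comparisons in the basis $(\mathbf{e}_1,\dots,\mathbf{e}_N)$, using the pointwise sandwich $\underline{\delta}(1,\dots,1)^{\top} \le \mathbf{\overline{e}} \le \overline{\delta}(1,\dots,1)^{\top}$ valid for any $\mathbf{\overline{e}} \in X^{++}$ with some $0<\underline{\delta}\le\overline{\delta}$.

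For part (1), necessity of \textup{(D3)(i)}: assume \textup{(B5)} holds with some $\mathbf{\overline{e}}\in X^{++}$, so that $S(\omega)\mathbf{\overline{e}} \ge \nu(\omega)\mathbf{\overline{e}}$ for all $\omega$. Reading the $i$-th coordinate and estimating $\overline{e}_j\le\overline{\delta}$ on the left and $\overline{e}_i\ge\underline{\delta}$ on the right gives
\begin{equation*}
\overline{\delta}\sum_{j=1}^{N} s_{ij}(\omega) \ge \sum_{j=1}^{N} s_{ij}(\omega)\,\overline{e}_{j} \ge \nu(\omega)\,\overline{e}_{i} \ge \nu(\omega)\underline{\delta},
\end{equation*}
so taking the minimum over $i$ yields $m_{r}(\omega) \ge (\underline{\delta}/\overline{\delta})\nu(\omega)$; combined with $s_{ij}>0$ this forces $m_{r}(\omega)>0$ and $\lnminus{m_{r}} \le \lnminus{\nu} + \lnplus(\overline{\delta}/\underline{\delta})$, hence $\lnminus{m_{r}}\in L_{1}(\OFP)$. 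For sufficiency of \textup{(D3)(i)}, I will take $\mathbf{\overline{e}} = (1,\dots,1)^{\top}/\sqrt{N}$, so that $(S(\omega)\mathbf{\overline{e}})_{i} = N^{-1/2}\sum_{j} s_{ij}(\omega) \ge m_{r}(\omega)/\sqrt{N}$ componentwise, i.e.\ $S(\omega)\mathbf{\overline{e}} \ge m_{r}(\omega)\mathbf{\overline{e}}$, and set $\nu(\omega) := m_{r}(\omega)$; then $\lnminus{\nu}\in L_{1}(\OFP)$ by hypothesis and measurability of $\nu$ is immediate from measurability of $S$.

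For part (2), I will apply exactly the same argument to the dual cocycle, using $S^{*}(\omega) = S(\theta^{-1}\omega)^{\top}$. The rows of $S(\theta^{-1}\omega)^{\top}$ are the columns of $S(\theta^{-1}\omega)$, so the relevant minimum row sum of $S^{*}(\omega)$ is $m_{c}(\theta^{-1}\omega)$. Repeating the coordinate estimate with $\mathbf{\overline{e}}^{*} \in X^{++}$ yields $m_{c}(\theta^{-1}\omega) \ge (\underline{\delta}^{*}/\overline{\delta}^{*})\nu^{*}(\omega)$ on the necessity side, and $S^{*}(\omega)\mathbf{\overline{e}}^{*} \ge m_{c}(\theta^{-1}\omega)\mathbf{\overline{e}}^{*}$ with $\mathbf{\overline{e}}^{*} = (1,\dots,1)^{\top}/\sqrt{N}$ on the sufficiency side. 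To pass between the integrability of $\lnminus(m_{c}\circ\theta^{-1})$ and that of $\lnminus{m_{c}}$, I will invoke the fact that $\theta$ is measure-preserving (since $(\theta^{n})_{n\in\ZZ}$ is a metric dynamical system on $\OFP$), so an $(\mathfrak{F},\mathfrak{B}(\RR))$-measurable function lies in $L_{1}(\OFP)$ iff its composition with $\theta^{-1}$ does.

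No step here looks to be a real obstacle; the argument is essentially a transcription of the coordinate-wise comparison technique already exploited in the proof of Proposition \ref{matrix-prop2}, the only mildly delicate point being the bookkeeping with $\theta^{-1}$ in part (2), which is handled by the $\theta$-invariance of $\PP$.
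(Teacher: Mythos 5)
Your proof is correct and follows essentially the same route as the paper: sandwich $\mathbf{\overline{e}}$ between scalar multiples of $(1,\dots,1)^{\top}$, read off the coordinatewise inequalities to relate $\nu$ to $m_r$ (necessity), and take $\mathbf{\overline{e}}=(1,\dots,1)^{\top}/\sqrt{N}$ with $\nu=m_r$ (sufficiency); for part (2) the paper merely says ``similar arguments,'' and your explicit handling of $S^{*}(\omega)=S(\theta^{-1}\omega)^{\top}$ together with $\theta$\nobreakdash-invariance of $\PP$ is exactly the right way to fill that in.
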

\begin{proof}
(1) First we assume that (B5) holds with $\bar{\mathbf{e}} \in
X^{++}$. Then $(\mathbf{\overline{e}})_i > 0$ for $i = 1,2,\dots,N$.
Note that there are positive reals $\underline{\delta} \le
\overline{\delta}$ such that
\begin{equation*}
\underline{\delta} (1, \dots, 1)^{\top} \le \mathbf{\overline{e}} \le
\overline{\delta} (1, \dots, 1)^{\top}.
\end{equation*}
There holds
\begin{equation*}
(S(\omega)(1, \dots, 1)^{\top})_{i} \ge \frac{1}{\overline{\delta}}
(S(\omega)\mathbf{\overline{e}})_{i} \ge
\frac{\nu(\omega)}{\overline{\delta}} (\mathbf{\overline{e}})_i \ge
\frac{\underline{\delta} \nu(\omega)}{\overline{\delta}}.
\end{equation*}
Observe that
\begin{equation*}
S(\omega) (1, \dots, 1)^{\top} = \Big(\sum_{j=1}^N s_{1j}(\omega),
\sum_{j=1}^N s_{2j}(\omega),\dots, \sum_{j=1}^N
s_{Nj}(\omega)\Big)^{\top},
\end{equation*}
consequently
\begin{equation*}
\nu(\omega) \le \frac{\underline{\delta}}{\overline{\delta}}
m_r(\omega).
\end{equation*}
This implies that $m_r(\omega) > 0$ for each $\omega \in \Omega$ and
$\ln^{-}{m_r} \in L_1(\OFP)$. Hence (D3)(i) holds.

Conversely, assume that (D3)(i) holds. Put $\mathbf{\overline{e}} =
(1, \dots, 1)^{\top}/\sqrt{N}$. Let
\begin{equation*}
\nu(\omega) = m_r(\omega).
\vspace{-0.05in}\end{equation*}
By  arguments as above,
\begin{equation*}
S(\omega) \mathbf{\overline{e}} \ge \nu(\omega) \mathbf{\overline{e}}
\quad \forall \ \omega \in \Omega.
\end{equation*}
This implies that (B5) holds with $\mathbf{\overline{e}} \in X^{++}$.

(2) It can be proved by similar arguments.
\end{proof}

\begin{remark}
\label{matrix-rm1}
Assume $s_{ij}(\omega) > 0$ for all $i, j = 1, 2, \dots,N$.
\begin{itemize}
\item[{\rm (1)}]
If $\lnplus(\ln{M} - \ln{m}) \in L_1(\OFP)$, then
$\lnplus(\ln{M_{c,i}} - \ln{m_{c,i}}) \in L_1(\OFP)$,
$\lnplus(\ln{M_{r,i}} - \ln{m_{r,i}}) \in L_1(\OFP)$, and hence
\textup{(D2)(i)--(ii)} holds.
\item[{\rm (2)}]
If $\ln{M} - \ln{m} \in L_1(\OFP)$, then $\ln{M_{c,i}} -
\ln{m_{c,i}} \in L_1(\OFP)$, $\ln{M_{r,i}} - \ln{m_{r,i}} \in
L_1(\OFP)$, and hence \textup{(D2)(iii)} holds.
\item[{\rm (3)}]
If $\ln^{-}{m} \in L_1(\OFP)$, then \textup{(D3)(i)--(ii)} holds.
\end{itemize}
\end{remark}

\begin{theorem}
\label{matrix-thm}
\begin{itemize}
\item[{\rm (1)}]
{\em (Entire positive orbits)}  Assume \textup{(D1)(iii)} and
that $S(\omega)(X^{+} \setminus \{0\}) \subset X^{+} \setminus
\{0\}$ for all $\omega \in \Omega$.  Then for any $\omega \in
\Omega$ there is an entire positive orbit $v_{\omega} \colon \ZZ
\to X^{+} \setminus \{0\}$ of $U_{\omega}$.

\item[{\rm (2)}]
{\em (Generalized principal Floquet subspaces and Lyapunov
exponents)} Assume \textup{(D1)(iii)} and \textup{(D2)(i)--(ii)}.
Then $\Phi$ and $\Phi^{*}$ admit families of generalized
principal Floquet subspaces \allowbreak
$\{\tilde{E}_1(\omega)\}_{\omega \in \tilde{\Omega}_1}
\allowbreak = \allowbreak \{\spanned{\{w(\omega)\}}\}_{\omega \in
\tilde{\Omega}_1}$ and $\{\tilde{E}_1^*(\omega)\}_{\omega \in
\tilde{\Omega}_1^*} \allowbreak =
\{\spanned{\{w^*(\omega)\}}\}_{\omega \in \tilde{\Omega}_1^*}$,
with $w(\omega), w^{*}(\omega) \in X^{++}$ for all $\omega \in
\tilde{\Omega}_1$.

\item[{\rm (3)}]
{\em (Generalized exponential separation)} Assume
\textup{(D1)(iii)} and \textup{(D2)(i)--(iii)}. Then there is
$\tilde{\sigma} \in (0,\infty]$ such that the triple
$\{\tilde{E}_1(\cdot), \tilde{F}_1(\cdot), \tilde{\sigma} \}$
generates a generalized exponential separation, where
$\tilde{F}_1(\omega) := \{\, u \in X: \langle u, w^{*}(\omega)
\rangle = 0 \,\}$.  If we assume moreover \textup{(D1)(ii)}, then
$S(\omega) \tilde{F}_1(\omega) = \tilde{F}_1(\theta\omega)$ for
any $\omega \in \tilde{\Omega}_1$.

\item[{\rm (4)}]
{\em (Finiteness of principal Lyapunov exponent)} Assume
\textup{(D1)(iii)}, \textup{(D2)(i)--(iii)}, and \textup{(D3)(i)}
or \textup{(ii)}. Then $\tilde{\lambda}_1 > -\infty$, where
$\tilde{\lambda}_1$ is the generalized principal Lyapunov
exponent associated to $\{\tilde E_1(\omega)\}$.
\end{itemize}
\end{theorem}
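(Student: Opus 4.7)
The plan is to reduce each of the four statements to the corresponding general theorem from Section~\ref{general-theory} by invoking Propositions~\ref{matrix-prop1}, \ref{matrix-prop2}, \ref{matrix-prop3}, which translate the concrete matrix hypotheses \textup{(D1)--(D3)} into the abstract hypotheses \textup{(B0)--(B5)}, \textup{(B5)$^{*}$}. Since $X=\RR^{N}$ with the standard cone, \textup{(B0)} is automatic. The whole argument is essentially bookkeeping once the dictionary between the two sets of assumptions is in place.

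For part~(1), the assumption \textup{(D1)(iii)} contains \textup{(D1)(i)} so Proposition~\ref{matrix-prop1}(1),(3) gives \textup{(B1)(i)} and \textup{(B2)}; together with the hypothesis that $S(\omega)$ sends $X^{+}\setminus\{0\}$ into itself, Theorem~\ref{thm:entire-orbits-existence}(i) produces the desired entire positive orbit for each $\omega$. For part~(2), Proposition~\ref{matrix-prop2}(1) turns \textup{(D2)(i)} into \textup{(B3)} with $\mathbf{e}\in X^{++}$, so Theorem~\ref{theorem-w} applied to $\Phi$ yields $\{\tilde{E}_1(\omega)\}=\{\mathrm{span}\{w(\omega)\}\}$; because $w(\omega)\in C_{\mathbf{e}}$ and $\mathbf{e}\in X^{++}$, we actually get $w(\omega)\in X^{++}$. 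For the dual statement, Proposition~\ref{matrix-prop2}(2) says that \textup{(D2)(ii)} is exactly \textup{(B3)$^{*}$} with $\mathbf{e}^{*}\in X^{++}$. I then apply Theorem~\ref{theorem-w} to the dual system $\Phi^{*}$: the required analogue of \textup{(B1)(i)} for $\Phi^{*}$ follows from $\|S^{*}(\omega)\|=\|S(\theta^{-1}\omega)^{\top}\|=\|S(\theta^{-1}\omega)\|$ together with the $\theta$-invariance of $\PP$, positivity of $\Phi^{*}$ is automatic from~(B2) as noted after~(B2), and \textup{(B3)$^{*}$} for $\Phi$ is exactly \textup{(B3)} for~$\Phi^{*}$. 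This yields $\{\tilde{E}_1^{*}(\omega)\}$ with $w^{*}(\omega)\in X^{++}$.

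For part~(3), Proposition~\ref{matrix-prop2}(3) turns \textup{(D2)(iii)} into \textup{(B4)}. With \textup{(B0)}, \textup{(B1)(i)}, \textup{(B2)}, \textup{(B4)} in hand, Theorem~\ref{separation-thm}(1)--(4) provides $\tilde{\sigma}\in(0,\infty]$ and the one-codimensional family $\{\tilde{F}_1(\omega)\}$, defined precisely as in the paragraph preceding Theorem~\ref{separation-thm}, namely $\tilde{F}_1(\omega)=\{u\in X:\langle u,w^{*}(\omega)\rangle=0\}$, so that $\{\tilde{E}_1(\cdot),\tilde{F}_1(\cdot),\tilde{\sigma}\}$ generates a generalized exponential separation. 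If additionally \textup{(D1)(ii)} holds, then by Proposition~\ref{matrix-prop1}(2) $S(\omega)$ is invertible for every $\omega$; the invariant inclusion $S(\omega)\tilde{F}_1(\omega)\subset\tilde{F}_1(\theta\omega)$ upgrades to equality by counting dimensions, since $S(\omega)$ preserves the dimension $N-1$. For part~(4), Proposition~\ref{matrix-prop3}(1) transforms \textup{(D3)(i)} into \textup{(B5)} (with $\overline{\mathbf{e}}\in X^{++}$) and Proposition~\ref{matrix-prop3}(2) transforms \textup{(D3)(ii)} into \textup{(B5)$^{*}$}. Either way, Theorem~\ref{separation-thm}(6) gives $\tilde{\lambda}_1>-\infty$.

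The one place where some care is needed—and which I would regard as the main subtlety rather than a real obstacle—is the verification that the hypotheses for applying Theorem~\ref{theorem-w} to the dual $\Phi^{*}$ really follow from the stated assumptions on $\Phi$. Specifically, one must check that \textup{(D1)(iii)} together with $\theta$-invariance of $\PP$ delivers the integrability condition \textup{(B1)(i)} for $\Phi^{*}$, and that \textup{(D2)(ii)} (a row-sum type focusing on $\Phi$) is the correct column-sum focusing condition on $\Phi^{*}=(S(\theta^{-1}\cdot))^{\top}$. Both reductions are formal once one uses that the operator norm is invariant under transposition and that the roles of rows and columns are interchanged by duality; after that, the four parts follow by a direct citation of the abstract theorems.
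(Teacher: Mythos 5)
Your proposal matches the paper's proof in both structure and content: each part is reduced to the corresponding abstract result (Theorems~\ref{thm:entire-orbits-existence}, \ref{theorem-w}, \ref{separation-thm}) by translating hypotheses (D1)--(D3) into (B0)--(B5) via Propositions~\ref{matrix-prop1}--\ref{matrix-prop3}. The only difference is that you spell out the verification of the hypotheses for the dual system $\Phi^{*}$ (invariance of the operator norm under transposition, $\theta$-invariance of $\PP$, interchange of rows and columns) and the dimension-counting step for $S(\omega)\tilde{F}_1(\omega)=\tilde{F}_1(\theta\omega)$, details the paper treats as implicit.
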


\begin{proof}[Proof of Theorem \ref{matrix-thm}]
(1) It follows from Theorem~\ref{thm:entire-orbits-existence}(1).

(2) Parts of it follow from Propositions \ref{matrix-prop1}(1), (3),
\ref{matrix-prop2}(1), (2), Theorem \ref{theorem-w} and its analog
for the dual system.

(3) It follows from Propositions \ref{matrix-prop1}(1), (3),
\ref{matrix-prop2}(1)--(3) and Theorem \ref{separation-thm}.  The
last sentence follows from (D1)(ii).

(4) It follows from Proposition \ref{matrix-prop3} and Theorem
\ref{separation-thm}.
\end{proof}

\begin{remark}
{\em In \cite{AGD} the authors investigated the principal Lyapunov
exponent and principal Floquet subspaces for the random dynamical
system generated by $S(\omega)$ satisfying
\begin{equation}
\label{min-max-eq}
\ln^{-}{m} \in L_1(\OFP), \quad \lnplus{M} \in
L_1(\OFP).
\end{equation}
Theorem \ref{matrix-thm} extends the results in \cite{AGD} in the
following aspects.

\begin{itemize}
\item[{\rm (1)}]
The result in Theorem \ref{matrix-thm}(1) is new.
\item[{\rm (2)}]
The conditions in Theorem \ref{matrix-thm}(2) and (3) are weaker
than the conditions posed in \cite{AGD} and the results in
Theorem \ref{matrix-thm}(2) and (3) are not covered in
\cite{AGD}. Observe that $\tilde{\lambda}_1$ in
Theorem~\ref{matrix-thm}(2) and (3) may be $-\infty$.
\item[{\rm (3)}]
Theorem \ref{matrix-thm}(4) recovers the results in \cite{AGD}.
\end{itemize}}
\end{remark}

\begin{remark}
\label{grow-along-principal-direction-rk1}
{\em Theorem \ref{matrix-thm}(3) implies that for any $u_0 \in X^{+}
\setminus \{0\}$ and $\omega \in \tilde{\Omega}_1$,
\begin{equation*}
\lim_{n\to\infty} \frac{1}{n}\ln\norm{S^n(\omega)u_0} =
\tilde{\lambda}_1
\end{equation*}
and
\begin{equation*}
\limsup_{n\to\infty} \frac{1}{n} \ln{\left\lVert
\frac{S^n(\omega)u_0}{\norm{S^n(\omega)u_0}} - w(\theta^{n}\omega)
\right\rVert} \le -\tilde{\sigma}.
\end{equation*}
Hence $S^n(\omega)u_0$ decreases or increases exponentially at the
rate $\tilde{\lambda}_1$, and its direction converges exponentially
at the rate at~least $\tilde{\sigma}$ toward the direction of
$w(\theta^n\omega)$.}
\end{remark}

\begin{example}
{\em (Random Leslie matrices). Assume that, for each $\omega \in
\Omega$, $S(\omega)$ is a random Leslie matrix, i.e.,
\begin{equation*}
S(\omega) =
\left(\begin{matrix} m_1(\omega) & m_2(\omega) &
m_3(\omega) & \cdots &  m_{N-1}(\omega) & m_N(\omega) \\
b_1(\omega) & 0 & 0 & \cdots & 0 & 0 \\
0 & b_2(\omega) & 0 & \cdots & 0 & 0 \\
0 & 0 & b_3(\omega) & \cdots & 0 & 0 \\
\vdots & \vdots & \vdots & \ddots & \vdots & \vdots \\
0 & 0 & 0 & \cdots & b_{N-1}(\omega)& 0
\end{matrix}\right),
\end{equation*}
where $m_j$ ($ > 0$) ($j = 1, 2, \dots, N$) represent the number of
offspring an individual in age group $j$ in the current time
contributes to the first age group at next time, $b_j$ ($ > 0$) ($j =
1, 2, \dots, N-1$) denote the proportion of individuals of age $j$ in
the current time surviving to age $j+1$ at next time.}
\end{example}

We have
\begin{theorem}
\label{leslie-thm}
Assume that $\ln^{-}{m_0}, \ \lnplus{M_0} \in L_1(\OFP)$, where
\begin{equation*}
m_0(\omega) = \min\{m_1(\omega), m_2(\omega), \dots, m_N(\omega),
b_1(\omega), \dots, b_{N-1}(\omega)\}
\end{equation*}
and
\begin{equation*}
M_0(\omega) = \max\{m_1(\omega), m_2(\omega), \dots, m_N(\omega),
b_1(\omega), \dots, b_{N-1}(\omega)\}.
\end{equation*}
Then \textup{(D1)(i)--(iii)}, \textup{(D2)(i)--(iii)} and
\textup{(D3)(i)--(ii)} are satisfied with $S(\omega)$ replaced by
$S^{(N)}(\omega)$.  Consequently, there are
\begin{itemize}
\item[$\diamond$]
$\tilde{\Omega}_1 \subset \Omega$ with $\PP(\tilde{\Omega}_1) =
1$ and $\theta(\tilde{\Omega}_1) = \tilde{\Omega}_1$,
\item[$\diamond$]
$\{\tilde{E}_1(\omega)\}_{\omega \in \tilde{\Omega}_1} =
\{\spanned{\{w(\omega)\}}\}_{\omega \in \tilde{\Omega}_1}$ with
$w \colon \tilde{\Omega}_1 \to X^{++}$ being $(\mathfrak{F},
\allowbreak \mathfrak{B}(X))$-measurable,
\item[$\diamond$]
$\{\tilde{F}_1(\omega)\}_{\omega \in \tilde{\Omega}_1}$ with
$\tilde{F}_1(\omega) = \{\, u \in X: \langle u, w^{*}(\omega)
\rangle = 0 \,\}$ for each $\omega \in \tilde{\Omega}_1$, $w^{*}
\colon \tilde{\Omega}_1 \to X^{++}$ being $(\mathfrak{F},
\allowbreak \mathfrak{B}(X))$\nobreakdash-\hspace{0pt}measurable,
\item[$\diamond$]
$\tilde{\lambda}_1 \in (-\infty,\infty)$, and $\tilde{\sigma} \in
(0,\infty]$,
\end{itemize}
such that
\begin{itemize}
\item
$\{\tilde{E}_1(\omega)\}_{\omega \in \tilde{\Omega}_1}$ is a
family of generalized principal Floquet subspaces of $S(\cdot)$
and $\tilde{\lambda}_1$ is the generalized principal Lyapunov
exponent associated to $\{\tilde{E}_1(\omega)\}_{\omega \in
\tilde{\Omega}_1}$;

\item
$\{\tilde{E}_1(\omega), \tilde{F}_1(\omega),\tilde{\sigma}\}$
generates a generalized exponential separation of $S(\cdot)$;

\item
$S(\omega) \tilde{E}_1(\omega) = \tilde{E}_1(\theta\omega)$ and
$S(\omega) \tilde{F}_1(\omega) = \tilde{F}_1(\theta\omega)$ for
each $\omega \in \tilde{\Omega}_1$.
\end{itemize}
\end{theorem}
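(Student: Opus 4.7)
The strategy is to apply Theorem \ref{matrix-thm} to the original Leslie cocycle after verifying all the hypotheses (D1), (D2), (D3) for the $N$-step matrix $S^{(N)}(\omega)$, and invoking Remark \ref{assumption-rk}, which licenses the sampling time in (B1), (B3), (B4) and (B3)$^{*}$ to be taken as $T=N$ rather than $T=1$. Since the base dynamics remains the ergodic system $(\theta^{n})_{n\in\ZZ}$, the conclusions will automatically concern the time-$1$ cocycle $S(\omega)$. The crucial input is strict positivity of $S^{(N)}(\omega)$ together with matching lower and upper bounds on all its entries.

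First, I would establish a combinatorial positivity lemma: for every pair $(k,i)\in\{1,\dots,N\}^{2}$ there is a directed path of length exactly $N$ from node $i$ to node $k$ in the Leslie graph (edges $j\to 1$ of weight $m_{j}$ and $j\to j+1$ of weight $b_{j}$). One such path is: first step $i\to 1$ (weight $m_{i}$), then any mixture of the self-loop $1\to 1$ (weight $m_{1}$) and upward moves $j\to j+1$ for the remaining $N-1$ steps, arranged to terminate at $k$. Since $(S^{(N)}(\omega))_{k,i}$ is the sum over all length-$N$ paths of the product of edge weights $\prod_{l=0}^{N-1} f_{l}(\theta^{l}\omega)$, one obtains the two-sided bound
\begin{equation*}
\prod_{l=0}^{N-1} m_{0}(\theta^{l}\omega) \;\le\; \bigl(S^{(N)}(\omega)\bigr)_{k,i} \;\le\; N^{N}\prod_{l=0}^{N-1} M_{0}(\theta^{l}\omega),
\end{equation*}
uniformly in $(k,i)$.

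Using this estimate together with $\theta$-invariance of $\PP$, the verification of (D1)--(D3) for $S^{(N)}$ becomes routine bookkeeping. The hypotheses $\ln^{-}m_{0},\lnplus M_{0}\in L_{1}(\OFP)$ together with $0<m_{0}\le M_{0}$ yield $\ln m_{0},\ln M_{0}\in L_{1}(\OFP)$, hence $\ln(M_{0}/m_{0})\in L_{1}(\OFP)$; ergodicity then propagates this to the finite sums $\sum_{l=0}^{N-1}\ln^{\pm}m_{0}\circ\theta^{l}$ and $\sum_{l=0}^{N-1}\ln(M_{0}/m_{0})\circ\theta^{l}$. Conditions (D1)(i) and (D1)(iii) follow immediately; (D1)(ii) from $\det S(\omega)=(-1)^{N-1}m_{N}(\omega)\prod_{j=1}^{N-1}b_{j}(\omega)\neq 0$ and multiplicativity of the determinant; (D2)(iii) because every column and row ratio $M_{c,i}^{(N)}/m_{c,i}^{(N)}$, $M_{r,i}^{(N)}/m_{r,i}^{(N)}$ is dominated by $N^{N}\prod_{l}(M_{0}/m_{0})(\theta^{l}\omega)$; (D3)(i)--(ii) because every row and column of $S^{(N)}(\omega)$ sums to at least $N\prod_{l}m_{0}(\theta^{l}\omega)$.

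Finally, Theorem \ref{matrix-thm}, applied to the original cocycle via Remark \ref{assumption-rk} with $T=N$, produces the invariant set $\tilde\Omega_{1}$, the measurable sections $w,w^{*}$ valued in $X^{++}$, the finite generalized principal Lyapunov exponent $\tilde\lambda_{1}\in(-\infty,\infty)$ (finiteness being part (4) of the theorem, which uses (D3)), and the separation gap $\tilde\sigma\in(0,\infty]$. Because the underlying theorems in Section \ref{subsection:general-theorems} produce invariance at every time in $\TT^{+}$, the identities $S(\omega)\tilde E_{1}(\omega)=\tilde E_{1}(\theta\omega)$ and $S(\omega)\tilde F_{1}(\omega)=\tilde F_{1}(\theta\omega)$ fall out at $t=1$, the second using nonsingularity of $S(\omega)$, i.e.\ (D1)(ii) at the original time step. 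The main obstacle I expect is the combinatorial step: ensuring that the path length is \emph{exactly} $N$ uniformly in $(k,i)$, since a positivity statement at an earlier iterate would leave some entries of $S^{(n)}(\omega)$ with $n<N$ equal to zero, ruining the focusing estimates. Once the length-$N$ lemma is in place, everything else is a direct translation of the general machinery.
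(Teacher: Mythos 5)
Your proof is correct and follows exactly the route the paper indicates: verify (D1)--(D3) for the $N$\nobreakdash-step matrix $S^{(N)}(\omega)$ via uniform two\nobreakdash-sided entrywise bounds (the length\nobreakdash-$N$ path lemma being the essential combinatorial input that the paper leaves to the reader), then invoke Theorem~\ref{matrix-thm} together with Remark~\ref{assumption-rk} with $T=N$. The only minor looseness you inherit from the paper itself is that Remark~\ref{assumption-rk} lists (B1), (B3), (B3)$^*$, (B4) but not (B5), (B5)$^*$; the intended reading (which both you and the paper use) is that the sampling time in those conditions may also be taken to be $T$, so that (D3) for $S^{(N)}$ still yields $\tilde\lambda_1>-\infty$.
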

\begin{proof}[Indication of proof]
The theorem follows from Theorem~\ref{matrix-thm} and
Remark~\ref{assumption-rk}.
\end{proof}

\section{Random Cooperative and Type-$K$ Monotone Systems of
Ordinary Differential Equations}
\label{section-cooperative}

In this section we consider applications of the general results
stated in  Section~\ref{general-theory} to random dynamical systems
generated by linear random cooperative systems and linear random
type-$K$ monotone systems.

It is a standing assumption in
Subsections~\ref{subsection-cooperative}
and~\ref{subsection-competitive} that $(\theta_{t})_{t \in \RR}$ is
an ergodic metric dynamical system on $\OFP$, where the probability
measure $\PP$ is complete.

Further, in Subsections~\ref{subsection-cooperative}
and~\ref{subsection-competitive} we make the following standing
assumption.

\medskip
\noindent\textbf{(OP0)} (Measurability) {\em $C \colon \Omega \to
\RR^{N \times N}$, where $C$ stands for $A$ in
Subsection~\ref{subsection-cooperative} and for $B$ in
Subsection~\ref{subsection-competitive}, is $(\mathfrak{F},
\mathfrak{B}(\RR^{N \times N}))$\nobreakdash-\hspace{0pt}measurable,
and $[\, \RR \ni t \mapsto \norm{C(\theta_{t}\omega)} \allowbreak \in
\RR\,]$ belongs to $L_{1,\mathrm{loc}}(\RR)$ for all $\omega \in
\Omega$.}
\medskip

We remark here that (OP0) is satisfied if $C \in L_1(\OFP, \RR^{N
\times N})$ (see~\cite[Example 2.2.8]{Arn}).

\medskip
Under the assumption (OP0), for any $\omega \in \Omega$ and $u_0 \in
X$ there exists a unique solution $[\, \RR \ni t \mapsto u(t; \omega,
u_0) \in X \,]$ of the linear system
\begin{equation}
\label{ODE-system}
u' = C(\theta_{t}\omega) u
\end{equation}
satisfying the initial condition
\begin{equation}
\label{ODE-system-initial-condition}
u(0; \omega, u_0) = u_0.
\end{equation}
The solution is understood in the Carath\'eodory sense: The function
$[\, t \mapsto u(t; \omega, u_0) \,]$ is absolutely continuous, $u(0;
\omega, u_0) = u_0$, and for Lebesgue\nobreakdash-\hspace{0pt}a.e. $t
\in \RR$ there holds
\begin{equation*}
\frac{du(t; \omega, u_0)}{dt} = C(\theta_{t}\omega) u(t; \omega,
u_0).
\end{equation*}
Define
\begin{equation*}
U_{\omega}(t)u_0 := u(t; \omega, u_0), \qquad (t \in \RR,\ \omega \in
\Omega, \ u_0 \in X).
\end{equation*}
$((U_{\omega}(t))_{\omega \in \Omega, t \in [0,\infty)},
(\theta_{t})_{t \in \RR})$ is a measurable linear
skew\nobreakdash-\hspace{0pt}product semiflow on $X$ covering
$\theta$ (for proofs see, e.g, \cite[Section 2.2]{Arn}).  Indeed, all
the relations in the definition are satisfied for $t \in \RR$, and we
can legitimately call $\Phi = ((U_{\omega}(t))_{\omega \in \Omega, t
\in \RR}, (\theta_{t})_{t \in \RR})$ a {\em measurable linear
skew\nobreakdash-\hspace{0pt}product flow\/} on $X$ generated by
\eqref{ODE-system}.  In~particular, (B1)(ii) holds.

\subsection{Random Cooperative Systems of Ordinary Differential
Equations}
\label{subsection-cooperative}
In this subsection, we consider applications of the general results
stated in  Section~\ref{general-theory} to the following cooperative
system of ordinary differential equations
\begin{equation}
\label{cooperative-system}
\dot u(t) = A(\theta_{t}\omega) u(t), \qquad \omega \in \Omega, \ t
\in \RR, \ u \in \RR^N,
\end{equation}
where
\begin{equation*}
A(\omega) =
\left(\begin{matrix}
a_{11}(\omega)&a_{12}(\omega)&\cdots&a_{1N}(\omega) \\
a_{21}(\omega)&a_{22}(\omega)&\cdots&a_{2N}(\omega) \\
\vdots & \vdots & \ddots & \vdots \\
a_{N1}(\omega)&a_{N2}(\omega)&\cdots&a_{NN}(\omega)
\end{matrix}\right).
\end{equation*}
All the notations of $X$, $X^{+}$, etc., are as in
Section~\ref{section-matrix}.  Recall that $\lVert \cdot \rVert_{1}$
is the $\ell_1$\nobreakdash-\hspace{0pt}norm in $X$, $\lVert u
\rVert_{1} = \abs{u_1} + \dots + \abs{u_N}$ for $u = (u_1, \dots,
u_N)^{\top}$.

In the rest of this subsection we assume that (OP0) holds. We state
the standing assumptions on $A(\omega)$.

Let
\begin{equation}
\label{cooperative-aux-eq}
\tilde{a}_{ii}(\omega) := \min_{0 \le t \le 1} \int_0^{t}
a_{ii}(\theta_\tau\omega) \, d\tau, \qquad \bar{a}_{ij}(\omega) :=
\min_{0 \le s \le 1}\int_{s}^1 a_{ij}(\theta_\tau\omega) \, d\tau.
\end{equation}

\noindent \textbf{(O1)} (Cooperativity) {\em $a_{ij}(\omega) \ge 0$
for all $i \ne j$, $i, j = 1, 2,\dots, N$ and $\omega \in \Omega$.}

\medskip
\noindent \textbf{(O2)} (Integrability) {\em The function $[\, \Omega
\ni \omega \mapsto \max\limits_{1 \le i,j \le N} a_{ij}(\omega) \,]
\,]$ is in \allowbreak $L_1(\OFP)$.}

\medskip
\noindent \textbf{(O3)} (Irreducibility) {\em There is an
$(\mathfrak{F},
\mathfrak{B}(\RR))$\nobreakdash-\hspace{0pt}measurable function
$\delta \colon \Omega \to (0,\infty)$  such that for each $\omega \in
\Omega$ and $i \in \{1, 2, \dots,N\}$ there are $j_{1} = i, j_2, j_3,
\dots, j_N \in \{1, 2, \dots, N\}$ satisfying
\begin{itemize}
\item[{\rm (i)}]
$\{j_1, j_2, \dots, j_{N}\} = \{1, 2,\dots, N\}$ and $a_{j_{l}
j_{l+1}}(\theta_{t}\omega) \ge \delta(\omega)$ for
Lebesgue\nobreakdash-\hspace{0pt}a.e.\ $0 \le t \le 1$ and $l =
1, 2, \dots, N-1$.
\item[{\rm (ii)}]
$\lnplus{\ln \bigl(\overline{\beta}/\underline{\beta}\bigr)} \in
L_1(\OFP)$, where
\begin{equation*}
\overline{\beta}(\omega) = \exp{\biggl( \int\limits_{0}^{1}
\Bigl( \sum_{l=1}^{N} \max_{1 \le j \le N}
a_{lj}(\theta_{\tau}\omega) \Bigr) \, d\tau \biggr)}, \quad
\underline{\beta}(\omega) = \min_{1 \le i \le N}\beta_i(\omega),
\end{equation*}
and
\begin{align*}
\beta_{i}(\omega) =
\min\Bigl\{&\exp{(\tilde{a}_{j_1j_1}(\omega))},
\exp{(\tilde{a}_{j_{1} j_{1}}(\omega) + \bar{a}_{j_{2}
j_{2}}(\omega))}\delta(\omega), \\
& \exp{(\tilde{a}_{j_{1} j_{1}}(\omega) + \bar{a}_{j_{2}
j_{2}}(\omega) + \bar{a}_{j_{3}
j_{3}}(\omega))}\frac{\delta^2(\omega)}{2!}, \dots,
\\
& \exp{(\tilde{a}_{j_{1} j_{1}}(\omega) + \bar{a}_{j_{2}
j_{2}}(\omega) + \bar{a}_{j_{3} j_{3}}(\omega) + \dots +
\bar{a}_{j_{N} j_{N}}(\omega))}
\frac{\delta^N(\omega)}{N!}\Bigr\}
\end{align*}
with $j_1 = i$.

\item[{\rm (iii)}]
$\ln{\bigl(\overline{\beta}/\underline{\beta}\bigr)} \in
L_1(\OFP)$, where $\overline{\beta}$ and $\underline{\beta}$ are
as in (ii).

\item[{\rm (iv)}]
$\ln^{-}{\underline{\beta}} \in L_1(\OFP)$, where
$\underline{\beta}$ is as in (ii).
\end{itemize}
}

\noindent\textbf{(O3)$^{\prime}$}
(Off\nobreakdash-\hspace{0pt}diagonal positivity)  {\em
\begin{itemize}
\item[{\rm (i)}]
There is an $(\mathfrak{F},
\mathfrak{B}(\RR))$\nobreakdash-\hspace{0pt}measurable function
$\tilde{\delta} \colon \Omega \to (0,\infty)$ such that for any
$\omega \in \Omega$ and any $i \ne j$ there holds
$a_{ij}(\theta_{t}\omega) \ge \tilde{\delta}(\omega)$ for
Lebesgue\nobreakdash-\hspace{0pt}a.e.\ $t \in [0,1]$.

\item[{\rm (ii)}]
$\ln^{+}\ln\big(\overline{\beta}/\underline{\tilde{\beta}} \big)
\in L_1(\OFP)$, where
\begin{equation*}
\overline{\beta}(\omega) = \exp{\biggl( \int\limits_{0}^{1}
\Bigl( \sum_{l=1}^{N} \max_{1 \le j \le N}
a_{lj}(\theta_{\tau}\omega) \Bigr) \, d\tau \biggr)}, \quad
\underline{\tilde{\beta}}(\omega) = \min_{1 \le i \le
N}\tilde{\beta}_i(\omega),
\end{equation*}
and
\begin{equation*}
\tilde{\beta}_{i}(\omega) =
\min\Bigl\{\exp(\tilde{a}_{ii}(\omega)), \bigl( \min_{\substack
{1 \le j \le N \\ j \ne i}} \exp(\tilde{a}_{ii}(\omega) +
\bar{a}_{ij}(\omega)) \bigr)\delta(\omega)\Bigr\}.
\end{equation*}

\item[{\rm (iii)}]
$\ln\big(\overline{\beta}/\underline{\tilde{\beta}}\big) \in
L_1(\OFP)$, where $\overline{\beta}$ and
$\underline{\tilde{\beta}}$ are as in (ii).

\item[{\rm (iv)}]
$\ln^{-} \underline{\tilde{\beta}} \in L_1(\OFP)$, where
$\underline{\tilde{\beta}}$ is as in (ii).
\end{itemize}
}

\begin{proposition} [Positivity]
\label{cooperative-prop1}
Assume \textup{(O1)}.  Then $\Phi$ satisfies \textup{(B2)}.
\end{proposition}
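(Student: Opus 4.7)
The plan is to prove positivity of the semiflow by the classical variation-of-constants / Picard iteration argument for cooperative linear systems. By linearity of $U_\omega(t)$, it suffices to show that $U_\omega(t)u_0 \in X^{+}$ for every $\omega \in \Omega$, $t \ge 0$ and $u_0 \in X^{+}$.

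The first step is to split the generator into diagonal and off-diagonal parts: write $A(\omega) = D(\omega) + R(\omega)$, where $D(\omega) = \diag(a_{11}(\omega),\dots,a_{NN}(\omega))$ and $R(\omega) = A(\omega) - D(\omega)$. By assumption (O1), all entries of $R(\omega)$ are nonnegative, so $R(\omega)(X^{+}) \subset X^{+}$. Using (OP0), define the diagonal propagator
\begin{equation*}
\Psi_{\omega}(t,s) := \diag\!\left( \exp\!\int_{s}^{t} a_{11}(\theta_{\tau}\omega)\,d\tau,\ \dots,\ \exp\!\int_{s}^{t} a_{NN}(\theta_{\tau}\omega)\,d\tau \right),
\end{equation*}
which is well defined for all $s,t \in \RR$. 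Since its diagonal entries are strictly positive, $\Psi_{\omega}(t,s)(X^{+}) \subset X^{+}$ for every $s,t$.

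The second step is to use the Carath\'eodory variation-of-constants formula for \eqref{ODE-system}, treating $R(\theta_{t}\omega)u(t)$ as an inhomogeneity for the diagonal system $\dot v = D(\theta_{t}\omega)v$:
\begin{equation*}
u(t;\omega,u_{0}) = \Psi_{\omega}(t,0)u_{0} + \int_{0}^{t} \Psi_{\omega}(t,s)\, R(\theta_{s}\omega)\, u(s;\omega,u_{0})\, ds, \qquad t \ge 0.
\end{equation*}
Now run Picard iteration: set $u^{(0)}(t) := \Psi_{\omega}(t,0)u_{0}$, which lies in $X^{+}$ since $u_{0} \in X^{+}$ and $\Psi_{\omega}(t,0)$ preserves $X^{+}$, and define inductively
\begin{equation*}
u^{(n+1)}(t) := \Psi_{\omega}(t,0)u_{0} + \int_{0}^{t} \Psi_{\omega}(t,s)\, R(\theta_{s}\omega)\, u^{(n)}(s)\, ds.
\end{equation*}
If $u^{(n)}(s) \in X^{+}$ for all $s \in [0,t]$, then $R(\theta_{s}\omega)u^{(n)}(s) \in X^{+}$, $\Psi_{\omega}(t,s)$ maps $X^{+}$ into $X^{+}$, and so $u^{(n+1)}(t) \in X^{+}$ (the integrand being a measurable $X^{+}$-valued function, its Bochner integral stays in $X^{+}$). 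By induction, $u^{(n)}(t) \in X^{+}$ for every $n$ and $t \ge 0$.

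The third step is convergence. Fix $T > 0$ and $\omega$. Since $\lVert R(\theta_{\tau}\omega)\rVert \le \lVert A(\theta_{\tau}\omega)\rVert$ is in $L_{1,\mathrm{loc}}(\RR)$ by (OP0), and $\lVert \Psi_{\omega}(t,s)\rVert$ is bounded on $\{0\le s\le t\le T\}$, the standard Gronwall/Picard estimate yields $u^{(n)} \to u(\,\cdot\,;\omega,u_{0})$ uniformly on $[0,T]$. Since $X^{+}$ is closed, the limit $u(t;\omega,u_{0})$ lies in $X^{+}$ for each $t \in [0,T]$, hence for all $t \ge 0$. This gives $U_{\omega}(t)u_{0} \in X^{+}$, which is exactly (B2).

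I do not expect a genuine obstacle: the only technical point is verifying that the iterates $u^{(n)}$ are well defined as absolutely continuous functions under the mere local integrability of $\lVert A(\theta_\cdot\omega)\rVert$ given by (OP0), and that the Picard limit coincides with the Carath\'eodory solution --- both are standard once one has the integral equation. The positivity-preservation mechanism itself is transparent from the sign structure of $D$ and $R$.
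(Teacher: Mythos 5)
Your argument is correct. The paper disposes of this proposition with a one-line citation to Walter's comparison theorem~\cite{W}; you instead give a self-contained proof via the Duhamel/Picard mechanism: split $A=D+R$ into diagonal and off-diagonal parts, observe that the diagonal propagator $\Psi_\omega(t,s)$ has positive diagonal entries and hence preserves $X^+$, that $R$ has nonnegative entries by (O1) and hence preserves $X^+$, and therefore every Picard iterate for the variation-of-constants equation lies in the closed cone $X^+$, as does the uniform limit. Walter's route is a differential-inequality (M\"uller--Kamke) comparison argument adapted to Carath\'eodory solutions, which is shorter to invoke but requires citing or reproving that comparison theorem; your route is more elementary, needs only the integral-equation formulation and closedness of the cone, and makes the positivity-preserving mechanism (the sign decomposition of $A$) completely transparent. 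Both apply at the same level of generality. One cosmetic point: the bound $\lVert R(\theta_\tau\omega)\rVert \le \lVert A(\theta_\tau\omega)\rVert$ is norm-dependent and need not hold literally for an arbitrary matrix norm, but all that is needed is $\lVert R(\theta_\tau\omega)\rVert \le C\lVert A(\theta_\tau\omega)\rVert$ for some dimensional constant $C$ (which holds by equivalence of norms), so local integrability of $\lVert R(\theta_\cdot\omega)\rVert$ follows from (OP0) and the Picard estimates go through unchanged.
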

\begin{proof}
See \cite[Thm.~1]{W}.
\end{proof}

\begin{proposition}[Integrability]
\label{cooperative-prop2}
Assume  \textup{(O1)} and \textup{(O2)}.  Then \textup{(B1)(i)} is
satisfied.
\end{proposition}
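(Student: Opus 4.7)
The plan is to use cooperativity (O1) together with Proposition \ref{cooperative-prop1}, which guarantees that the flow preserves $X^{+}$, to obtain a scalar Gronwall estimate in the $\ell_1$ norm. The key point is that (O2) only gives integrability of $\max_{i,j} a_{ij}$ from above, not of the operator norm $\norm{A(\omega)}$, since the diagonal entries $a_{ii}$ may be arbitrarily negative. Cooperativity turns this into a feature rather than a bug, because on $X^{+}$ one can differentiate $\norm{u(t)}_1$ without absolute values and negative diagonals only improve the upper bound.

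Concretely, I would first fix $u_0 \in X^{+}$ and set $u(t) = U_\omega(t)u_0$, which by Proposition \ref{cooperative-prop1} lies in $X^{+}$ for all $t \ge 0$. Then $[\,t \mapsto \norm{u(t)}_1 = \sum_j u_j(t)\,]$ is absolutely continuous with
\begin{equation*}
\frac{d}{dt} \norm{u(t)}_1 = \sum_{j} \Bigl(\sum_{i} a_{ij}(\theta_t\omega)\Bigr) u_j(t) \le N \max_{i,j} a_{ij}(\theta_t\omega) \cdot \norm{u(t)}_1
\end{equation*}
for Lebesgue\nobreakdash-\hspace{0pt}a.e.\ $t$, where the inequality uses $u_j(t) \ge 0$. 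Gronwall then gives
\begin{equation*}
\norm{U_\omega(t)u_0}_1 \le \norm{u_0}_1 \exp\!\Bigl(N\!\int_0^t \max_{i,j} a_{ij}(\theta_\tau\omega)\,d\tau\Bigr), \qquad u_0 \in X^{+}.
\end{equation*}

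Second, for general $u_0 \in X$ I would decompose $u_0 = u_0^{+} - u_0^{-}$ with $u_0^{\pm} \in X^{+}$ and $\norm{u_0^{\pm}}_1 \le \norm{u_0}_1$, as furnished by Example \ref{finite-dim-example-standard} (i.e.\ $K = 1$ for $\ell_1$). Linearity then yields the same estimate up to a factor of $2$, and after passing to the operator norm (equivalent to any other matrix norm in finite dimension) one obtains a constant $C > 0$ such that, for all $\omega \in \Omega$,
\begin{equation*}
\sup_{0 \le s \le 1} \lnplus \norm{U_\omega(s)} \le C + N \int_0^1 \bigl(\max_{i,j} a_{ij}(\theta_\tau\omega)\bigr)^{+} d\tau.
\end{equation*}
The exact same argument applied to the shifted system on $[s,1]$ produces
\begin{equation*}
\sup_{0 \le s \le 1} \lnplus \norm{U_{\theta_s\omega}(1-s)} \le C + N \int_0^1 \bigl(\max_{i,j} a_{ij}(\theta_\tau\omega)\bigr)^{+} d\tau,
\end{equation*}
since the integrand on $[s,1-s+s]$ is dominated by its integral on $[0,1]$.

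Finally, by Fubini and the $\PP$\nobreakdash-\hspace{0pt}invariance of $(\theta_t)$,
\begin{equation*}
\int_\Omega \int_0^1 \bigl(\max_{i,j} a_{ij}(\theta_\tau\omega)\bigr)^{+} d\tau\, d\PP = \int_\Omega \bigl(\max_{i,j} a_{ij}(\omega)\bigr)^{+} d\PP,
\end{equation*}
which is finite by (O2). Both integrability conditions in (B1)(i) follow. The only delicate step is the differentiation of $\norm{u(t)}_1$ without absolute values; this is justified precisely because $u(t)$ remains in $X^{+}$, which is where cooperativity (via Proposition \ref{cooperative-prop1}) is essential and cannot be replaced by a naive Gronwall bound on $\norm{A(\theta_\tau\omega)}$.
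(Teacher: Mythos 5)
Your proof is correct and follows essentially the same route as the paper's: differentiate $\|u(t)\|_1$ (without absolute values, justified by cooperativity via Proposition~\ref{cooperative-prop1}), Gronwall on $X^+$, then decompose general initial data into positive and negative parts and pass to the other norm by equivalence. The only cosmetic differences are that the paper uses the slightly tighter Gronwall constant $\sum_i \max_j a_{ij}$ rather than $N\max_{i,j}a_{ij}$, and exploits $\|u_0^+\|_1+\|u_0^-\|_1=\|u_0\|_1$ so that no factor of $2$ appears; neither affects the conclusion.
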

\begin{proof}
We claim that for any $\omega \in \Omega$ and any $u_0 \in X^{+}$
there holds
\begin{equation}
\label{l1-estimate}
\normone{U_\omega(t)u_0} = \normone{u(t; \omega, u_0)} \le
\exp\biggl(\int\limits_{0}^{t} \Bigl( \sum_{i=1}^{N} \max_{1 \le j
\le N} a_{ij}(\theta_{\tau}\omega) \Bigr) \, d\tau\biggr)
\normone{u_0}
\end{equation}
for all $t \ge 0$.  Indeed, fix $\omega \in \Omega$ and $u_0 \in
X^{+}$ with $\lVert u_0 \rVert_{1} = 1$, and denote $u(\cdot) =
(u_1(\cdot), \dots, u_N(\cdot)) := u(\cdot; \omega, u_0)$.  For each
$1 \le i \le N$ we estimate
\begin{equation*}
\frac{d u_{i}(t)}{dt} = \sum_{j=1}^{N} a_{ij}(\theta_{t}\omega)
u_{j}(t) \le \max_{1 \le j \le N} a_{ij}(\theta_{t}\omega) \cdot
\sum_{k=1}^{N} u_{k}(t),
\end{equation*}
consequently, in view of Proposition~\ref{cooperative-prop1},
\begin{equation*}
\frac{d}{dt} \normone{u(t)} \le \sum_{i=1}^{N} \max_{1 \le j \le N}
a_{ij}(\theta_{t}\omega) \cdot \normone{u(t)},
\end{equation*}
for Lebesgue\nobreakdash-\hspace{0pt}a.e.\ $t \in \RR$.  The
estimate~\eqref{l1-estimate} follows by comparison theorems for
Carath\'eodory solutions, see, e.g., \cite[Thm.~1.10.1]{La-Lee}.

Since, by remarks in Example~\ref{finite-dim-example-standard}, any
$u_0 \in X$ can be written as $u_0^{+} - u_0^{-}$ with
$\normone{u_0^{+}} \le \normone{u_0}$, $\normone{u_0^{-}} \le
\normone{u_0}$, and, moreover, $\normone{u_0} = \normone{u_0^{+}} +
\normone{u_0^{-}}$, we have that
\begin{equation}
\label{cooperative-estimate}
\normone{U_\omega(t)u_0} \le \normone{U_\omega(t)u_0^{+}} +
\normone{U_\omega(t)u_0^{-}} \le \exp\biggl(\int\limits_{0}^{t}
\Bigl( \sum_{i=1}^{N} \max_{1 \le j \le N}
a_{ij}(\theta_{\tau}\omega) \Bigr) \, d\tau\biggr) \normone{u_0}
\end{equation}
for all $\omega \in \Omega$, $u_0 \in X$ and $t \ge 0$.  As the
integrand in the rightmost term in~\eqref{cooperative-estimate} is
nonnegative, we infer that
\begin{equation*}
\lnplus{\lVert U_{\omega}(t) \rVert_{1}} \le \int\limits_{0}^{t}
\Bigl( \sum_{i=1}^{N} \max_{1 \le j \le N}
a_{ij}(\theta_{\tau}\omega) \Bigr) \, d\tau
\end{equation*}
for all $\omega \in \Omega$ and $t \ge 0$.  Since the norms
$\norm{\cdot}$ and $\lVert \cdot \rVert_{1}$ are equivalent, the
assertion follows.
\end{proof}

\begin{proposition}[Focusing]
\label{cooperative-prop3}
\begin{itemize}
\item[{\rm (1)}]
Assume \textup{(O1)} and \textup{(O3)(i)--(ii)}, or \textup{(O1)}
and \textup{(O3)$^{'}$(i)--(ii)}. Then \textup{(B3)},
\textup{(B3)$^*$} hold.
\item[{\rm (2)}]
Assume \textup{(O1)} and \textup{(O3)(i), (iii)}, or
\textup{(O1)} and \textup{(O3)$^{'}$(i), (iii)}. Then
\textup{(B4)} holds.
\end{itemize}
\end{proposition}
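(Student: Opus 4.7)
The plan is to take $\mathbf{e} = \mathbf{e}^{*} = (1, 1, \dots, 1)^{\top}/\sqrt{N}$, for which $\langle \mathbf{e}, \mathbf{e}^{*} \rangle = 1 > 0$ (handling the compatibility hypothesis of (B4) for free), and to derive the componentwise upper and lower bounds required in (B3) and (B3)$^{*}$ directly. The upper bound is essentially free: the $\ell^{1}$\nobreakdash-estimate $\normone{U_{\omega}(1) u} \le \overline{\beta}(\omega) \normone{u}$ proved in Proposition~\ref{cooperative-prop2} dominates every component of the nonnegative vector $U_{\omega}(1)u$ by its $\ell^{1}$\nobreakdash-norm, giving $U_{\omega}(1) u \le \sqrt{N}\,\overline{\beta}(\omega) \normone{u}\,\mathbf{e}$ componentwise.

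The componentwise lower bound is the main work. Fix $\omega \in \Omega$ and a target index $i$, and let $j_{1} = i, j_{2}, \dots, j_{N}$ be the chain produced by (O3)(i), along which the coefficient of $u_{j_{l+1}}$ in the ODE for $u_{j_{l}}$ is bounded below by $\delta(\omega)$ a.e.\ on $[0,1]$. Given a nonzero $u \in X^{+}$, pick $l_{0}$ with $u_{j_{l_{0}}} = \max_{k} u_{k}$, so that $u_{j_{l_{0}}} \ge \normone{u}/N$. Cascading positivity along the chain by iterated variation\nobreakdash-of\nobreakdash-constants (starting from the crude diagonal bound $u_{j_{l_{0}}}(s) \ge u_{j_{l_{0}}}(0) \exp(\tilde{a}_{j_{l_{0}} j_{l_{0}}}(\omega))$ valid for $s \in [0,1]$, applying Duhamel once to pass between consecutive chain indices, and discarding all nonnegative terms other than the chain edge at each step) yields after $l_{0} - 1$ nested integrations a lower bound
\begin{equation*}
u_{i}(1) \;\ge\; u_{j_{l_{0}}}(0) \cdot \exp\!\bigl(\tilde{a}_{j_{1} j_{1}}(\omega) + \bar{a}_{j_{2} j_{2}}(\omega) + \dots + \bar{a}_{j_{l_{0}} j_{l_{0}}}(\omega)\bigr) \cdot \frac{\delta(\omega)^{l_{0}-1}}{(l_{0}-1)!},
\end{equation*}
which is exactly the $l_{0}$\nobreakdash-th entry in the definition of $\beta_{i}(\omega)$. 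Minimizing over $l_{0}$ gives $u_{i}(1) \ge \beta_{i}(\omega) \normone{u}/N$, and minimizing over $i$ then yields $U_{\omega}(1) u \ge (\underline{\beta}(\omega) \normone{u}/\sqrt{N})\,\mathbf{e}$ componentwise. With $\beta(\omega, u) := \underline{\beta}(\omega) \normone{u}/\sqrt{N}$ and $\varkappa(\omega) := N\,\overline{\beta}(\omega)/\underline{\beta}(\omega)$ the inequality in (B3) holds, and (O3)(ii) supplies $\lnplus{\ln \varkappa} \in L_{1}(\OFP)$. Under the stronger hypothesis (O3)$'$(i)--(ii) the iteration collapses to a single Duhamel step because every off\nobreakdash-diagonal entry exceeds $\tilde{\delta}$, and only $l_{0} \in \{1,2\}$ need be treated; the identical arithmetic reproduces $\tilde{\beta}_{i}$ and $\underline{\tilde{\beta}}$.

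For (B3)$^{*}$ the same recipe is applied to the transpose cocycle $U^{*}_{\omega}(1) = U_{\theta_{-1}\omega}(1)^{\top}$: transposition preserves nonnegativity of the cocycle entries and merely interchanges the roles of row\nobreakdash- and column\nobreakdash-indices, so the chain from (O3)(i) read on the reversed graph of $A$ controls the adjoint evolution, and the resulting constants $\varkappa^{*}$ inherit their integrability from that of $\varkappa$ and the $\theta$\nobreakdash-invariance of $\PP$. Part (2) is then immediate: replacing (O3)(ii) by (O3)(iii) (resp.\ (O3)$'$(ii) by (O3)$'$(iii)) upgrades $\lnplus{\ln(\overline{\beta}/\underline{\beta})}$ to $\ln(\overline{\beta}/\underline{\beta})$ in $L_{1}(\OFP)$, hence $\ln \varkappa, \ln \varkappa^{*} \in L_{1}(\OFP)$, and $\langle \mathbf{e}, \mathbf{e}^{*} \rangle = 1 > 0$ supplies the final ingredient of (B4). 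The main obstacle is the combinatorial bookkeeping of the iterated Duhamel estimate: on an intermediate slice $[s_{m+1}, s_{m}] \subset [0,1]$ the diagonal exponential factor $\exp(\int_{s_{m+1}}^{s_{m}} a_{j_{m} j_{m}}(\theta_{\tau}\omega)\,d\tau)$ has an argument that is neither of the form $\int_{0}^{t}$ nor of the form $\int_{s}^{1}$ that $\tilde{a}$ and $\bar{a}$ are designed to control directly; verifying that the cascade of such factors still admits a uniform lower bound expressible through the $\tilde{a}_{\cdot\cdot}$ and $\bar{a}_{\cdot\cdot}$ quantities is the delicate step that forces the specific shape of $\beta_{i}$ in (O3)(ii). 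Beyond this bookkeeping, the argument reduces to standard comparison theorems for Carath\'eodory solutions.
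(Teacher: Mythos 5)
Your core device---cascading positivity along the chain of (O3)(i) by iterated Duhamel/variation of constants, then bounding the diagonal exponentials by $\tilde{a}_{\cdot\cdot}$ and $\bar{a}_{\cdot\cdot}$---is exactly what the paper does, so the heart of the argument matches. Where you differ is in the routing. The paper does not work with a generic $u \in X^{+}$ and a maximal component; it runs the cascade starting from each basis vector $\mathbf{e}_i$, obtains the two-sided bound $\underline{\beta}(\omega) \le u_{ij}(\omega) \le \overline{\beta}(\omega)$ on \emph{all} matrix entries $u_{ij}(\omega) = u_i(1;\omega,\mathbf{e}_j)$ of $U_\omega(1)$, observes that this implies (D2)(i)--(ii) for the time-one discretization, and then quotes Proposition~\ref{matrix-prop2}(1)--(2) to conclude both (B3) and (B3)$^{*}$ in one stroke. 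That reduction is what makes (B3)$^{*}$ come for free: the row and column min/max ratios are both controlled by the same entrywise bounds, with no separate adjoint analysis. Your treatment of (B3)$^{*}$ is where I see the genuine soft spot: you propose to run the cascade on ``the transpose cocycle $U^{*}_\omega(1) = U_{\theta_{-1}\omega}(1)^{\top}$'' using ``the chain from (O3)(i) read on the reversed graph of $A$.'' But $U_{\theta_{-1}\omega}(1)^{\top}$ is the transpose of a fundamental solution, not the solution operator of the ODE with generator $A^{\top}$, and reversing the directed chain of (O3)(i) produces a Hamiltonian path \emph{ending} at $i$, not one starting there; so a literal re-run of your cascade on the adjoint is not available without a fresh time-reversal argument. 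The clean repair is the one the paper uses and your own estimate already contains: your lower bound specialized to $u = \mathbf{e}_j$ gives $u_{ij}(\omega) \ge \underline{\beta}(\omega)$, hence bounds on every entry of $U_{\theta_{-1}\omega}(1)$ and therefore of its transpose, and then Proposition~\ref{matrix-prop2}(2) gives (B3)$^{*}$ directly. I would also note that the delicacy you flag---that the iterated Duhamel exponent involves integrals $\int_{s_{m+1}}^{s_m}$ over interior slices of $[0,1]$ which are not of the form $\int_0^t$ or $\int_s^1$ that $\tilde{a}$ and $\bar{a}$ were built to dominate---is real and is the content that underlies the shape of $\beta_i$ in (O3)(ii); the paper's displayed cascade sidesteps rather than resolves it, so acknowledging it is fair, but an actual proof has to carry the bookkeeping through rather than gesture at it.
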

\begin{proof}
Observe that, since in (B3) or (B4) the suitable properties are to
hold for $U_{\omega}(1)$ and $U^{*}_{\omega}(1)$ only, we can (and
do) apply the results in Section~\ref{section-matrix}.

First, assume (O1) and (O3)(i)--(ii). Fix $\omega \in \Omega$ and $i
\in \{1, \dots, N\}$.  By (O3)(i), there are $j_1, j_2, \dots, j_N$
with $j_1 = i$ such that $\{j_1, j_2, \dots, j_N\} = \{1, 2,
\allowbreak \dots, \allowbreak N\}$ and
\begin{equation}
\label{cooperative-eq5}
a_{j_{k} j_{k+1}}(\theta_t\omega) \ge \delta(\omega) \quad \text{for}
\quad t \in [0,1].
\end{equation}
Note that
\begin{equation*}
\frac{d u_i(t;\omega,\mathbf{e}_i)}{dt} \ge a_{ii}(\theta_t\omega)
u_i(t;\omega,\mathbf{e}_i)
\end{equation*}
for Lebesgue\nobreakdash-\hspace{0pt}a.e.\ $t \in \RR$ and
$u_i(0;\omega,\mathbf{e}_i) = 1$, so, by comparison results for
Carath\'eodory solutions (\cite[Thm.~1.10.1]{La-Lee}),
\begin{equation}
\label{cooperative-eq6}
u_i(t;\omega,\mathbf{e}_i) \ge \exp{\Bigl( \int\limits_{0}^{t}
a_{ii}(\theta_\tau\omega) \, d\tau\Bigr)}
\end{equation}
for $t > 0$.

Similarly,
\begin{equation*}
\frac{d u_{j_l}(t;\omega,\mathbf{e}_i)}{dt} \ge a_{j_{l}
j_{l}}(\theta_t\omega)u_{j_l}(t;\omega,\mathbf{e}_i) + a_{j_{l-1}
j_{l}}(\theta_t\omega)u_{j_{l-1}}(t;\omega,\mathbf{e}_i), \quad l =
2, 3, \dots, N,
\end{equation*}
for Lebesgue\nobreakdash-\hspace{0pt}a.e.\ $t \in \RR$ and
$u_{j_l}(0;\omega,\mathbf{e}_i) = 0$, so, by comparison results for
Carath\'eodory solutions,
\begin{equation}
\label{cooperative-eq7}
u_{j_l}(t;\omega,\mathbf{e}_i) \ge \int\limits_{0}^{t} \exp\Bigl(
\int\limits_{s}^{t} a_{j_{l} j_{l}}(\theta_{\tau} \omega) \, d\tau
\Bigr) a_{j_{l-1} j_{l}}(\theta_s\omega) \,
u_{j_{l-1}}(s;\omega,\mathbf{e}_i) \, ds
\end{equation}
for $t > 0$ and  $l = 2, 3, \dots, N$.

By \eqref{cooperative-eq6} and \eqref{cooperative-eq7}, there holds
\begin{equation}
\label{cooperative-eq8}
\left\{
\begin{aligned}
& u_{ii}(\omega) & \ge {} & \exp(\tilde{a}_{j_1 j_1}(\omega))
\\
& u_{j_{2} i}(\omega) & \ge {} & \exp(\tilde{a}_{j_1 j_1}(\omega) +
\bar{a}_{j_2 j_2}(\omega)) \delta(\omega)
\\
& u_{j_{3} i}(\omega) & \ge {} & \exp(\tilde{a}_{j_1 j_1}(\omega) +
\bar{a}_{j_2 j_2}(\omega) + \bar{a}_{j_3j_3}(\omega))
\frac{\delta^2(\omega)}{2!} \\
& \vdots &  & \qquad \vdots \\
& u_{j_{N} i}(\omega) & \ge {} & \exp(\tilde{a}_{j_1 j_1}(\omega) +
\bar{a}_{j_2 j_2}(\omega) + \bar{a}_{j_3 j_3}(\omega) + \dots +
\bar{a}_{j_{N} j_{N}}(\omega)) \frac{\delta^N(\omega)}{N!}
\end{aligned}
\right.
\end{equation}
where $\tilde{a}_{ii}(\omega)$ and $\bar{a}_{ii}(\omega)$ are as in
\eqref{cooperative-aux-eq}, and $u_{kl}(\omega) = u_{k}(1; \omega,
\mathbf{e}_{l})$ for $k, l \in \{1, \dots, N\}$.

Further we have
\begin{equation*}
u_{ji}(\omega)  \le \lVert u(1; \omega, \mathbf{e}_i) \rVert_{1} \le
\exp{\biggl( \int\limits_{0}^{1} \Bigl( \sum_{l=1}^{N} \max_{1 \le k
\le N} a_{lk}(\theta_{\tau}\omega) \Bigr) \, d\tau \biggr)} \,\, {\rm
for}\,\, 1\leq j\leq N \quad \text{(by \eqref{l1-estimate})}.
\end{equation*}

We have proved that
\begin{equation}
\label{aux-eq1}
\underline{\beta}(\omega) \le u_{ij}(\omega) \le
\overline{\beta}(\omega)
\end{equation}
for all $\omega \in \Omega$ and $i, j \in \{1, 2, \dots, N\}$, where
$\underline{\beta}(\omega)$ and $\overline{\beta}(\omega)$ are as in
(O3).  Consequently, (D2)(i)--(ii) hold with
\begin{equation*}
\underline{\beta}(\omega) \le m_{c,i}(\omega) \le M_{c,i}(\omega) \le
\overline{\beta}(\omega), \quad \underline{\beta}(\omega) \le
m_{r,i}(\omega) \le M_{r,i}(\omega) \le
\overline{\beta}(\omega)
\end{equation*}
for all $\omega \in \Omega$, $i = 1, 2, \dots, N$, which gives with
the help of Proposition~\ref{matrix-prop2}(1)--(2) that (B3) and
(B3)$^{*}$ hold.

Next, assume (O1) and (O3)$^{'}$(i)--(ii).  Fix $\omega \in \Omega$
and $i \in \{1, \dots, N\}$.  As above,  we estimate
\begin{equation}
\label{cooperative-eq6A}
u_i(t;\omega,\mathbf{e}_i) \ge \exp{\Bigl( \int\limits_{0}^{t}
a_{ii}(\theta_\tau\omega) \, d\tau\Bigr)}, \qquad t \ge 0.
\end{equation}
For $j \in \{1, \dots, N\}$, $j \ne i$, we have
\begin{equation*}
\frac{d u_{j}(t;\omega,\mathbf{e}_i)}{dt} \ge
a_{jj}(\theta_t\omega)u_{j}(t;\omega,\mathbf{e}_i) +
a_{ji}(\theta_t\omega)u_{i}(t;\omega,\mathbf{e}_i)
\end{equation*}
for Lebesgue\nobreakdash-\hspace{0pt}a.e.\ $t \in \RR$ and
$u_{j}(0;\omega,\mathbf{e}_i) = 0$, which gives
\begin{equation}
\label{cooperative-eq7A}
u_{j}(t;\omega,\mathbf{e}_i) \ge \int\limits_{0}^{t} \exp\Bigl(
\int\limits_{s}^{t} a_{jj}(\theta_{\tau}\omega) \, d\tau \Bigr)
a_{ji}(\theta_s\omega) \, u_{i}(s;\omega,\mathbf{e}_i) \, ds
\end{equation}
for $t \ge 0$.  By \eqref{cooperative-eq6A} and
\eqref{cooperative-eq7A}, there holds
\begin{equation}
\label{cooperative-eq8A}
u_{ii}(\omega) \ge \exp(\tilde{a}_{ii}(\omega)) \quad \text{and}
\quad u_{ji}(\omega) \ge \exp(\tilde{a}_{ii}(\omega) +
\bar{a}_{ji}(\omega)) \delta(\omega), \ j \ne i,
\end{equation}
from which it follows that
\begin{equation}
\label{aux-eq2}
\underline{\tilde{\beta}}(\omega) \le u_{ij}(\omega) \le
\overline{\beta}(\omega)
\end{equation}
for all $\omega \in \Omega$ and $i, j \in \{1, 2, \dots, N\}$, where
$\underline{\tilde{\beta}}(\omega)$ and $\overline{\beta}(\omega)$
are as in (O3)$^{'}$.  The rest goes along the lines of the proof in
the above case.

This completes the proof of (1).  The proof of (2) goes in a similar
way.
\end{proof}

\begin{proposition} [Strong positivity in one direction]
\label{cooperative-prop4}
Assume \textup{(O1)} and \textup{(O3)(i), (iv)}, or \textup{(O1)} and
\textup{(O3)$^{'}$(i), (iv)}. Then \textup{(B5)} and
\textup{(B5)$^*$} hold.
\end{proposition}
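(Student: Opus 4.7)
The plan is to reduce to Proposition~\ref{matrix-prop3} applied to the time-$1$ matrix $S(\omega) := U_{\omega}(1)$, by verifying its hypotheses, in particular (D3)(i) and (D3)(ii). The entry-wise lower bounds needed have already been produced in the proof of Proposition~\ref{cooperative-prop3}: under (O1) and (O3)(i) the estimate~\eqref{aux-eq1} gives $u_{ij}(\omega) \ge \underline{\beta}(\omega) > 0$ for every $\omega \in \Omega$ and every $i, j \in \{1,\dots,N\}$, where $u_{ij}(\omega)$ denotes the $(i,j)$-entry of $U_{\omega}(1)$; under (O1) and (O3)$^{\prime}$(i) the estimate~\eqref{aux-eq2} gives the analogous bound $u_{ij}(\omega) \ge \underline{\tilde{\beta}}(\omega) > 0$. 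In particular $s_{ij}(\omega) > 0$ for all $i, j$ and all $\omega$, which is the standing hypothesis of Proposition~\ref{matrix-prop3}.

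With these pointwise lower bounds on every entry of $S(\omega)$, the minimum row sum $m_r(\omega)$ and the minimum column sum $m_c(\omega)$ of $S(\omega)$ are each at least $N \underline{\beta}(\omega)$ in the first case, respectively $N \underline{\tilde{\beta}}(\omega)$ in the second, and in particular strictly positive on $\Omega$. Since (O3)(iv), respectively (O3)$^{\prime}$(iv), asserts precisely that $\ln^{-}{\underline{\beta}} \in L_1(\OFP)$, respectively $\ln^{-}{\underline{\tilde{\beta}}} \in L_1(\OFP)$, the elementary monotonicity $\ln^{-}(Nx) \le \ln^{-}{x}$ for $x > 0$ and $N \ge 1$ then yields $\ln^{-}{m_r}, \ln^{-}{m_c} \in L_1(\OFP)$. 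Thus both (D3)(i) and (D3)(ii) hold for $S(\omega)$.

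An application of Proposition~\ref{matrix-prop3}(1) and (2) now delivers (B5) and (B5)$^{*}$ respectively, with the distinguished positive vectors $\bar{\mathbf{e}} = \bar{\mathbf{e}}^{*} = (1,\dots,1)^{\top}/\sqrt{N} \in X^{++}$ inherited from the proof of that proposition. The argument is essentially mechanical once \eqref{aux-eq1} and \eqref{aux-eq2} are invoked; the only point requiring any real care is the integrability bookkeeping in the two alternative cases, and this is exactly what clauses (O3)(iv) and (O3)$^{\prime}$(iv) are formulated to supply.
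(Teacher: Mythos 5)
Your proof is correct and matches the paper's own argument essentially line for line: both reduce to Proposition~\ref{matrix-prop3} by using the entry-wise lower bounds \eqref{aux-eq1} (or \eqref{aux-eq2}) on the time-$1$ solution matrix to bound $m_r$ and $m_c$ from below, and then invoke (O3)(iv) (or (O3)$^{\prime}$(iv)) for the integrability of $\ln^{-}m_r$ and $\ln^{-}m_c$. The only cosmetic difference is that you keep the factor $N$ in the second case while the paper drops it; both are valid lower bounds.
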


\begin{proof}
As in the proof of Proposition~\ref{cooperative-prop3} we use
\eqref{aux-eq1} (or \eqref{aux-eq2}) to show that (D3)(i)--(ii) hold
with $m_r(\omega) \ge N \underline{\beta}(\omega)$ and $m_r(\omega)
\ge N \underline{\beta}(\omega)$ (or with $m_r(\omega) \ge
\underline{\tilde{\beta}}(\omega)$ and $m_c(\omega) \ge
\underline{\tilde{\beta}}(\omega)$), and apply
Proposition~\ref{matrix-prop3}.
\end{proof}

For $\omega \in \Omega$, by an {\em entire positive solution\/} of
\eqref{cooperative-system}$_{\omega}$ we understand an entire
positive orbit of $U_{\omega}$, that is, a function $v_{\omega}
\colon \RR \to X^{+}$ such that
\begin{equation*}
U_{\omega}(s) (v_{\omega}(t)) = v_{\omega}(t + s) \qquad \text{for
all } t \in \RR, \ s \in [0, \infty).
\end{equation*}
An entire positive solution $v_{\omega}$ is {\em nontrivial\/} if
$v_{\omega}(t) \in X^{+} \setminus \{0\}$ for each $t \in \RR$.

\begin{theorem}
\label{cooperative-thm}
\begin{itemize}
\item[{\rm (1)}]
{\em (Entire positive solution)} Assume \textup{(O1)}. For any
$\omega \in \Omega$ there exists a nontrivial entire positive
solution of \eqref{cooperative-system}$_{\omega}$.
\item[{\rm (2)}]
{\em (Generalized principal Floquet subspaces and Lyapunov
exponent)} Let \textup{(O1)} and \textup{(O2)} be satisfied.
Moreover, assume \textup{(B3)} and \textup{(B3)$^{*}$}
\textup{(}for~instance, assume \textup{(O3)(i)--(ii)} or
\textup{(O3)$^{'}$ (i)--(ii)}\textup{)}. Then $\Phi$ and
$\Phi^{*}$ admit families of generalized principal Floquet
subspaces $\{\tilde{E}_1(\omega)\}_{\omega \in \tilde{\Omega}_1}
= \{\spanned{\{w(\omega)\}}\}_{\omega \in \tilde{\Omega}_1}$ and
$\{\tilde{E}_1^*(\omega)\}_{\omega \in \tilde{\Omega}_1^*}
\allowbreak = \{\spanned{\{w^*(\omega)\}}\}_{\omega \in
\tilde{\Omega}_1^*}$. Moreover,
\begin{equation}
\label{lambda-eq}
\tilde{\lambda}_1 = \int\limits_{\Omega} \kappa(\omega) \,
d\PP(\omega),
\end{equation}
where $\kappa(\omega) = \langle A(\omega) w(\omega), w(\omega)
\rangle$, and $\tilde\lambda_1$ is the generalized principal
Lyapunov exponent associated to $\{\tilde
E_1(\omega)\}_{\omega\in\tilde\Omega_1}$.
\item[{\rm (3)}]
{\em (Generalized exponential separation)} Let \textup{(O1)} and
\textup{(O2)} be satisfied.  Moreover, assume \textup{(B4)}
\textup{(}for~instance, assume \textup{(O3)(i)} and
\textup{(iii)}, or \textup{(O3)$^{'}$(i)} and
\textup{(iii)}\textup{)}. Then there is $\tilde{\sigma} \in
(0,\infty]$ such that the triple $\{\tilde{E}_1(\omega),
\tilde{F}_1(\omega), \tilde{\sigma} \}$ generates a generalized
exponential separation of $S(\omega)$, where $\tilde{F}_1(\omega)
:= \{\, u \in X: \langle u, w^{*}(\omega) \rangle = 0 \,\}$.
Moreover, $U_{\omega}(t) \tilde{F}_1(\omega) =
\tilde{F}_1(\theta_{t}\omega)$ for any $\omega \in
\tilde{\Omega}_1$;
\item[{\rm (4)}]
{\em (Finiteness of principal Lyapunov exponent)} Let
\textup{(O1)} and \textup{(O2)} be satisfied.  Moreover, assume
\textup{(B5)} or \textup{(B5)$^{*}$} \textup{(}for instance,
assume \textup{(O3)(i), (iii)} and \textup{(iv)}, or
\textup{(O3)$^{'}$(i), (iii)} and \textup{(iv)}\textup{)}.  Then
$\tilde{\lambda}_1 > -\infty$.
\end{itemize}
\end{theorem}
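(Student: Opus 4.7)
All four parts reduce to verifying the abstract hypotheses (B1)--(B5) and then quoting results from Section~\ref{general-theory}, the one genuinely new computation being the Furstenberg--Khasminskii type formula \eqref{lambda-eq}. Throughout the argument the ambient space is $X=\RR^N$ with its standard Euclidean structure, and $U_{\omega}(t)$ is the Carath\'eodory flow of \eqref{cooperative-system}, which is invertible for every $t$, so (B1)(ii) holds automatically (as remarked at the end of Section~\ref{section-cooperative}'s preamble).

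\textbf{Parts (1) and (4).} For (1) I would note that (O1) yields (B2) by Proposition~\ref{cooperative-prop1}, and invertibility of $U_{\omega}(t)$ together with (B2) gives $U_{\omega}(t)(X^{+}\setminus\{0\})\subset X^{+}\setminus\{0\}$; apply Theorem~\ref{thm:entire-orbits-existence}(i). For (4) I would combine (O1)+(O2) (yielding (B1)(i) and (B2) via Propositions~\ref{cooperative-prop1}--\ref{cooperative-prop2}) with the assumed (B5) or (B5)$^{*}$ and invoke Theorem~\ref{separation-thm}(6); the sufficient-condition parenthetical follows from Proposition~\ref{cooperative-prop4}.

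\textbf{Part (2).} Propositions~\ref{cooperative-prop1}, \ref{cooperative-prop2} supply (B1)(i) and (B2); (B3), (B3)$^{*}$ are hypotheses (or come from Proposition~\ref{cooperative-prop3}(1)); (B0) holds since $X=\RR^N$ with the standard cone is reproducing. Theorem~\ref{theorem-w} (applied to $\Phi$ and to $\Phi^{*}$) then delivers $\{\tilde E_{1}(\omega)\}=\{\spanned\{w(\omega)\}\}$ and $\{\tilde E_{1}^{*}(\omega)\}=\{\spanned\{w^{*}(\omega)\}\}$. To obtain~\eqref{lambda-eq}, set $\rho_{t}(\omega):=\norm{U_{\omega}(t)w(\omega)}$, so $U_{\omega}(t)w(\omega)=\rho_{t}(\omega)\,w(\theta_{t}\omega)$ by Theorem~\ref{theorem-w}(1). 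Differentiating along the flow and using the ODE,
\begin{equation*}
\rho_{t}'(\omega)w(\theta_{t}\omega)+\rho_{t}(\omega)\tfrac{d}{dt}w(\theta_{t}\omega)=\rho_{t}(\omega)\,A(\theta_{t}\omega)w(\theta_{t}\omega).
\end{equation*}
Since $\norm{w(\theta_{t}\omega)}=1$ in the Euclidean norm, $\tfrac{d}{dt}w(\theta_{t}\omega)\perp w(\theta_{t}\omega)$, so taking inner product with $w(\theta_{t}\omega)$ yields $\tfrac{d}{dt}\ln\rho_{t}(\omega)=\kappa(\theta_{t}\omega)$ for a.e.\ $t$. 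Integrating, $\ln\rho_{1}(\omega)=\int_{0}^{1}\kappa(\theta_{t}\omega)\,dt$, and Theorem~\ref{theorem-w}(3) combined with Fubini and $\theta_{t}$-invariance of $\PP$ gives
\begin{equation*}
\tilde\lambda_{1}=\int_{\Omega}\ln\rho_{1}\,d\PP=\int_{0}^{1}\!\int_{\Omega}\kappa(\theta_{t}\omega)\,d\PP\,dt=\int_{\Omega}\kappa(\omega)\,d\PP.
\end{equation*}

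\textbf{Part (3) and the main obstacle.} With (B0), (B1)(i), (B2) already verified and (B4) assumed, Theorem~\ref{separation-thm} provides the measurable invariant decomposition, the codimension-one family $\tilde F_{1}(\omega)=\{u\in X:\langle u,w^{*}(\omega)\rangle=0\}$, and the gap $\tilde\sigma\in(0,\infty]$. Theorem~\ref{separation-thm} only gives $U_{\omega}(t)\tilde F_{1}(\omega)\subset \tilde F_{1}(\theta_{t}\omega)$; to upgrade to equality, I would use invertibility of $U_{\omega}(t)$ together with $U_{\omega}(t)\tilde E_{1}(\omega)=\tilde E_{1}(\theta_{t}\omega)$ and a dimension count in $\RR^N$: since $\tilde E_{1}\oplus\tilde F_{1}=X$ on both sides and $U_{\omega}(t)$ is bijective, the inclusion of $\tilde F_{1}$ cannot be strict. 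Among all four parts, the only nontrivial step is the Furstenberg--Khasminskii derivation of~\eqref{lambda-eq}; the mild subtlety is justifying the differentiation of $\rho_{t}(\omega)$ almost everywhere, which I would handle by noting that $t\mapsto U_{\omega}(t)w(\omega)$ is absolutely continuous (Carath\'eodory theory) and bounded away from $0$ on compact $t$-intervals, so $\ln\rho_{t}(\omega)$ is absolutely continuous in $t$ and the integration identity above is legitimate.
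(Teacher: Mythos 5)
Your argument is structurally identical to the paper's: reduce everything to the abstract theorems of Section~\ref{general-theory} via Propositions~\ref{cooperative-prop1}--\ref{cooperative-prop4}, with the only new content being the Furstenberg--Khasminskii formula~\eqref{lambda-eq}. Your derivation of $\tfrac{d}{dt}\ln\rho_{t}(\omega)=\kappa(\theta_{t}\omega)$ uses the orthogonality of $\tfrac{d}{dt}w(\theta_{t}\omega)$ to the unit vector $w(\theta_{t}\omega)$, while the paper differentiates $\tfrac{1}{2}\ln\langle U_{\omega}(t)w(\omega),U_{\omega}(t)w(\omega)\rangle$; these are the same computation in two dresses. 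Your dimension-count upgrade of $U_{\omega}(t)\tilde{F}_{1}(\omega)\subset\tilde{F}_{1}(\theta_{t}\omega)$ to equality in part~(3) is a clean self-contained alternative to the paper's citation of Part~I, and your observation in part~(1) that (B2) plus invertibility gives $U_{\omega}(t)(X^{+}\setminus\{0\})\subset X^{+}\setminus\{0\}$ is correct and worth making explicit.

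There is, however, a genuine gap in the final step of part~(2). Before you can make sense of $\int_{\Omega}\kappa\,d\PP$ as an element of $[-\infty,\infty)$, and before the Fubini/Tonelli interchange of $\int_{\Omega}$ with $\int_{0}^{1}$ is legitimate, you must know that $\kappa^{+}\in L_{1}\OFP$. This does not follow from the conclusion of Theorem~\ref{theorem-w}(3) alone: knowing that $(\ln\rho_{1})^{+}=\bigl(\int_{0}^{1}\kappa(\theta_{s}\omega)\,ds\bigr)^{+}$ is integrable tells you nothing about the integrability of $\int_{0}^{1}\kappa^{+}(\theta_{s}\omega)\,ds$, since $\bigl(\int f\bigr)^{+}\le\int f^{+}$ can be strict. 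You need the pointwise estimate
\begin{equation*}
\kappa(\omega)=\langle A(\omega)w(\omega),w(\omega)\rangle\le N\sum_{i=1}^{N}\max_{1\le j\le N}a_{ij}(\omega),
\end{equation*}
which uses $w(\omega)\in X^{+}$, $\norm{w(\omega)}=1$ (hence $\normone{w(\omega)}\le\sqrt{N}$), and (O1) to ensure $\max_{j}a_{ij}\ge 0$; combined with (O2) this gives $\kappa^{+}\in L_{1}\OFP$, at which point either your route (Theorem~\ref{theorem-w}(3) plus Tonelli applied separately to $\kappa^{+}$ and $\kappa^{-}$) or the paper's (the Birkhoff ergodic theorem for functions with integrable positive part) closes the argument. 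The paper supplies exactly this estimate; your write-up omits it, and without it the identity~\eqref{lambda-eq} is not yet established.
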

\begin{proof}
(1)  It follows from Theorem~\ref{thm:entire-orbits-existence}(i).

(2)  Parts of it follow from Proposition \ref{cooperative-prop3}(1),
Theorem \ref{theorem-w}, and its counterpart for the dual system. By
Theorem~\ref{theorem-w},
\begin{equation*}
\tilde{\lambda}_1 = \lim\limits_{t \to \infty}
\frac{\ln{\norm{U_{\omega}(t)w(\theta_{t}\omega)}}}{t}
\end{equation*}
for $\PP$\nobreakdash-\hspace{0pt}a.e.\ $\omega \in \Omega$.
Differentiating formally we obtain
\begin{equation*}
\begin{aligned}
&
\left. \frac{d}{dt} \ln{\norm{U_{\omega}(t)w(\theta_{t}\omega)}}
\right\rvert_{t = s} = \left. \frac{1}{2} \frac{d}{dt} \ln{\langle
U_{\omega}(t)w(\theta_{t}\omega), U_{\omega}(t)w(\theta_{t}\omega)
\rangle} \right\rvert_{t = s} \\
& = \frac{\langle
A(\theta_{s}\omega)(U_{\omega}(s)w(\theta_{s}\omega)),
U_{\omega}(s)w(\theta_{s}\omega) \rangle}
{\norm{U_{\omega}(s)w(\theta_{s}\omega)}^2} \\
& = \langle A(\theta_{s}\omega) w(\theta_{s}\omega),
w(\theta_{s}\omega) \rangle = \kappa(\theta_{s}\omega).
\end{aligned}
\end{equation*}
It follows from (OP0) that for each $\omega \in \Omega$ the function
$[\, \RR \ni t \mapsto
A(\theta_{t}\omega)(U_{\omega}(t)w(\theta_{t}\omega)) \in \RR^{N}
\,]$ belongs to $L_{1,\mathrm{loc}}(\RR, \RR^{N})$.  Consequently we
have
\begin{equation*}
\ln{\norm{U_{\omega}(t)w(\theta_{t}\omega)}} = \int\limits_{0}^{t}
\kappa(\theta_{s}\omega) \, ds\quad \forall t\geq 0.
\end{equation*}

For $\omega \in \Omega$ let $w(\omega) = (w_1(\omega), \dots,
w_{N}(\omega))^{\top}$.  We estimate
\begin{equation*}
\begin{aligned}
&
\quad \langle A(\omega) w(\omega), w(\omega) \rangle = \sum_{i=1}^{N}
\Bigl( \sum_{j=1}^{N} a_{ij}(\omega) w_j(\omega) \Bigr) w_i(\omega)
\\
&
\le \Bigl( \max_{1 \le i \le N} \sum_{j=1}^{N} a_{ij}(\omega)
w_j(\omega) \Bigr) \lVert w(\omega) \rVert_1 \le \sqrt{N} \max_{1 \le
i \le N} \sum_{j=1}^{N} a_{ij}(\omega) w_j(\omega) \\
&
\le \sqrt{N} \max_{1 \le i \le N} \Bigl( \bigl( \max_{1 \le j \le N}
a_{ij}(\omega) \bigr) \lVert w(\omega) \rVert_{1} \Bigr) \le N
\sum_{i=1}^{N} \max_{1 \le j \le N} a_{ij}(\omega).
\end{aligned}
\end{equation*}
Hence, by (O1) and (O2), $\kappa^{+} \in L_1(\OFP)$, which allows us
to apply the Birkhoff Ergodic Theorem to get \eqref{lambda-eq}.

(3)  It follows from Proposition \ref{cooperative-prop3}(2) and
Theorem \ref{separation-thm}.  The last sentence follows from
\cite[Theorem~3.8(5)]{MiShPart1}.

(4) It follows from Proposition~\ref{cooperative-prop4} and Theorem
\ref{separation-thm}.
\end{proof}

\begin{remark}
\label{grow-along-principal-direction-rk2}
{\em Theorem \ref{cooperative-thm}(3) implies that for any $u_0 \in
X^{+} \setminus \{0\}$ and $\omega \in \tilde{\Omega}_1$,
\begin{equation*}
\lim_{t\to\infty} \frac{1}{t} \ln\norm{U_{\omega}(t)u_0} =
\tilde\lambda_1
\end{equation*}
and
\begin{equation*}
\limsup_{t\to\infty} \frac{1}{t} \ln{\left\lVert
\frac{U_{\omega}(t)u_0}{\norm{U_{\omega}(t)u_0}} - w(\theta_t\omega)
\right\rVert} \le -\tilde{\sigma}.
\end{equation*}
Hence $U_\omega(t)u_0$ decreases or increases exponentially at the
rate $\tilde{\lambda}_1$, and its direction converges exponentially
at the rate at~least $\tilde{\sigma}$ toward the direction of
$w(\theta_t\omega)$.}
\end{remark}

\subsection{Type-$K$ Monotone Systems of Ordinary Differential
Equations}
\label{subsection-competitive}

In this subsection, we consider applications of the general results
stated in  Section~\ref{general-theory} to the following type-$K$
monotone systems of ordinary differential equations
\begin{equation}
\label{k-competitive-system}
\dot{u}(t) = B(\theta_{t}\omega) u(t), \qquad \omega \in \Omega, \ t
\in \RR, \ u \in \RR^N,
\end{equation}
where
\begin{equation*}
B(\omega) =
\left(\begin{matrix}
b_{11}(\omega)&b_{12}(\omega)&\cdots&b_{1N}(\omega) \\
b_{21}(\omega)&b_{22}(\omega)&\cdots&b_{2N}(\omega) \\
\vdots & \vdots & \ddots & \vdots \\
b_{N1}(\omega)&b_{N2}(\omega)&\cdots&b_{NN}(\omega)
\end{matrix}\right)
\end{equation*}
satisfies the following assumptions.

\medskip
\noindent \textbf{(P1)} (Type-$K$ monotonicity) {\em There are $1 \le
k, l \le N$ such that $k + l = N$, $b_{ij}(\omega) \ge 0$ for $i \ne
j$ and $i, j \in \{1,2,\dots,k\}$ or $i, j \in
\{k+1,k+2,\dots,k+l\}$, and $b_{ij}(\omega) \le 0$ for $i \in
\{1,2,\cdots,k\}$ and $j \in \{k+1,k+2,\cdots,k+l\}$ or $i \in
\{k+1,k+2,\cdots,k+l\}$ and $j \in \{1,2,\cdots,k\}$}

\medskip
\noindent \textbf{(P2)} (Integrability) {\em The function $[\, \Omega
\ni \omega \mapsto \max\limits_{1 \le i,j \le N} \abs{b_{ij}}(\omega)
\,] \,]$ is in \allowbreak $L_1(\OFP)$.}

To simplify the notation, we write $\mathcal{K}$ for $\{1, \dots,
k\}$ and $\mathcal{L}$ for $\{k+1, \dots, n\}$.

Make the following change of variables, $u \mapsto v$, where
\begin{equation*}
v_i =
\begin{cases}
u_i \quad & \text{if} \quad i \in \mathcal{K} \\
-u_i \quad & \text{if} \quad i \in \mathcal{L}.
\end{cases}
\end{equation*}
Then \eqref{k-competitive-system} becomes
\begin{equation}
\label{competitive-cooperative-system}
\dot{v}(t) = A(\theta_{t}\omega)v,
\end{equation}
where $A(\theta_{t}\omega) = (a_{ij}(\theta_{t}\omega))_{i,j=1}^{n}$
with
\begin{equation*}
a_{ij}(\omega)
=\begin{cases} b_{ij}(\omega) & \quad \text{if} \quad i, j \in
\mathcal{K} \text{ or }  i, j \in \mathcal{L} \\
-b_{ij}(\omega) & \quad \text{if} \quad i \in \mathcal{K} \text{ and
} j \in \mathcal{L} \text{ or } i \in \mathcal{L} \text{ and } j \in
\mathcal{K}.
\end{cases}
\end{equation*}
By (P1)--(P2), $A(\omega)$ satisfies (O1)--(O2). Let
\begin{equation*}
\tilde{X}^{+} := \{\, u=(u_1,\dots,u_N)^{\top} :  u_i \ge 0 \text{
for } i \in \mathcal{K} \text{ and } u_i \le 0 \text{ for } i \in
\mathcal{L} \,\}.
\end{equation*}
Then $\tilde{X}^{+}$ is a solid cone in $X$, $(X,\tilde{X}^{+})$
satisfies (B0), and $\Phi$ satisfies (B2) with respect to the order
induced by $\tilde{X}^{+}$.  We say an entire solution $[\, \RR \ni t
\mapsto v(t) \in X \,]$ of \eqref{k-competitive-system} is {\em
positive\/} if $v(t) \in \tilde{X}^{+}$ for any $t \in \RR$.

In the rest of this section, we assume (P1)--(P2) and that
$A(\omega)$ is as in \eqref{competitive-cooperative-system}. The
order in $X$ is referred to the order generated by $\tilde{X}^{+}$.
By Theorem \ref{cooperative-thm} and the relation between
\eqref{k-competitive-system} and
\eqref{competitive-cooperative-system}, we have

\begin{theorem}
\label{k-competitive-thm}
\begin{itemize}
\item[{\rm (1)}]
{\em (Entire positive solution)}  For any $\omega \in \Omega$
there exists a nontrivial entire positive solution of
\eqref{k-competitive-system}$_{\omega}$.
\item[{\rm (2)}]
{\em (Generalized principal Floquet subspaces and Lyapunov
exponent)} Assume  \textup{(B3)} and \textup{(B3)$^{*}$}
\textup{(}for~instance, assume  that $A(\omega)$ satisfies
\textup{(O3)(i)--(ii)} or \textup{(O3)$^{'}$
(i)--(ii)}\textup{)}. Then $\Phi$ and $\Phi^{*}$ admit families
of generalized principal Floquet subspaces
$\{\tilde{E}_1(\omega)\}_{\omega \in \tilde{\Omega}_1} =
\{\spanned{\{w(\omega)\}}\}_{\omega \in \tilde{\Omega}_1}$ and
$\{\tilde{E}_1^*(\omega)\}_{\omega \in \tilde{\Omega}_1^*}
\allowbreak = \{\spanned{\{w^*(\omega)\}}\}_{\omega \in
\tilde{\Omega}_1^*}$. Moreover,
\begin{equation}
\label{lambda-eq-1}
\tilde{\lambda}_1 = \int\limits_{\Omega} \kappa(\omega) \,
d\PP(\omega),
\end{equation}
where $\kappa(\omega) = \langle B(\omega) w(\omega), w(\omega)
\rangle$, and $\tilde\lambda_1$ is the generalized principal
Lyapunov exponent associated to $\{\tilde
E_1(\omega)\}_{\omega\in\tilde\Omega_1}$.
\item[{\rm (3)}]
{\em (Generalized exponential separation)} Assume  \textup{(B4)}
\textup{(}for~instance, assume $A(\omega)$ satisfies
\textup{(O3)(i)} and \textup{(iii)}, or \textup{(O3)$^{'}$(i)}
and \textup{(iii)}\textup{)}. Then there is $\tilde{\sigma} \in
(0,\infty]$ such that the triple $\{\tilde{E}_1(\omega),
\tilde{F}_1(\omega), \tilde{\sigma} \}$ generates a generalized
exponential separation of $S(\omega)$, where $\tilde{F}_1(\omega)
:= \{\, u \in X: \langle u, w^{*}(\omega) \rangle = 0 \,\}$.
Moreover, $U_{\omega}(t) \tilde{F}_1(\omega) =
\tilde{F}_1(\theta_{t}\omega)$ for any $\omega \in
\tilde{\Omega}_1$;
\item[{\rm (4)}]
{\em (Finiteness of principal Lyapunov exponent)} Assume
\textup{(B5)} or \textup{(B5)$^{*}$} \textup{(}for instance,
assume that $A(\omega)$ satisfies \textup{(O3)(i), (iii)} and
\textup{(iv)}, or \textup{(O3)$^{'}$(i), (iii)} and
\textup{(iv)}\textup{)}.  Then $\tilde{\lambda}_1 > -\infty$.
\end{itemize}
\end{theorem}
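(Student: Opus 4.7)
The plan is to exploit the linear conjugacy between the type-$K$ $B$-system and the cooperative $A$-system described just before the statement, and to deduce each of the four assertions from Theorem~\ref{cooperative-thm} applied to the $A$-system. Let $T\colon X\to X$ be the diagonal involution with $(Tu)_i=u_i$ for $i\in\mathcal{K}$ and $(Tu)_i=-u_i$ for $i\in\mathcal{L}$. Then $T=T^{-1}=T^*$, $T$ is a linear isometry for the standard norm (and for its dual under the standard pairing), and $T(\tilde X^+)=X^+$. Since $u$ solves $\dot u=B(\theta_t\omega)u$ if and only if $v:=Tu$ solves $\dot v=A(\theta_t\omega)v$ with $A=TBT$, the two skew-product semiflows are conjugate through $T$, namely $U^B_\omega(t)=T\circ U^A_\omega(t)\circ T$ for all $\omega$ and $t$, and the analogous relation holds for the duals.

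Next I would transfer the hypotheses. Because $T$ is a norm-preserving linear bijection matching the two cones, each of the assumptions (B1)(i), (B2), (B3), (B3)$^*$, (B4), (B5), (B5)$^*$ imposed on $\Phi$ with respect to $\tilde X^+$ is equivalent to the same assumption for the semiflow $\Phi_A$ generated by~\eqref{competitive-cooperative-system} with respect to $X^+$; one simply replaces $\mathbf e,\mathbf e^*,\bar{\mathbf e},\bar{\mathbf e}^*$ by their images under $T$ and leaves the measurable scalar functions $\varkappa,\varkappa^*,\nu,\nu^*$ unchanged. Combined with (P1) and (P2), Propositions~\ref{cooperative-prop1} and~\ref{cooperative-prop2} applied to $A$ yield (B2) and (B1)(i) for $\Phi$, and the parenthetical sufficient conditions in parts (2)--(4), stated in terms of $A$ satisfying (O3) or (O3)$^\prime$, become (B3), (B3)$^*$, (B4), (B5), (B5)$^*$ for $\Phi$ via Propositions~\ref{cooperative-prop3} and~\ref{cooperative-prop4}.

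Finally I would push the conclusions of Theorem~\ref{cooperative-thm} back through $T$. For (1), a nontrivial entire orbit $v^A_\omega\colon\RR\to X^+\setminus\{0\}$ of $U^A_\omega$ provided by Theorem~\ref{cooperative-thm}(1) yields $u_\omega(t):=Tv^A_\omega(t)$, a nontrivial entire positive solution of~\eqref{k-competitive-system}$_\omega$ taking values in $\tilde X^+\setminus\{0\}$. For (2) and (3), I would set $w(\omega):=Tw^A(\omega)$, $w^*(\omega):=Tw^{A*}(\omega)$, $\tilde E_1(\omega):=T\tilde E_1^A(\omega)$, and $\tilde F_1(\omega):=T\tilde F_1^A(\omega)$; measurability, invariance, one-(co)dimensionality, the fact that $\tilde F_1(\omega)\cap\tilde X^+=\{0\}$, temperedness of the associated projections, and the exponential gap $\tilde\sigma$ all survive because $T$ is a fixed linear isometry. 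The generalized principal Lyapunov exponent is unchanged, $\tilde\lambda_1^B=\tilde\lambda_1^A$, and~\eqref{lambda-eq-1} follows from~\eqref{lambda-eq} applied to $A$ via the identity $\langle B(\omega)w(\omega),w(\omega)\rangle=\langle A(\omega)w^A(\omega),w^A(\omega)\rangle$, itself a consequence of $T^*=T$ and $A=TBT$. Part (4) is then immediate since $\tilde\lambda_1^A>-\infty$ by Theorem~\ref{cooperative-thm}(4). The argument is essentially bookkeeping; the only mildly delicate step, and the one I would verify most carefully, is that every order-theoretic hypothesis (B2)--(B5)$^*$ really does transfer unchanged under conjugation by $T$ in both the primal and dual pictures, so that Theorem~\ref{cooperative-thm} can be invoked verbatim.
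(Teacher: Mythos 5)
Your proposal is correct and coincides with the paper's argument: the paper simply asserts the theorem ``by Theorem~\ref{cooperative-thm} and the relation between \eqref{k-competitive-system} and \eqref{competitive-cooperative-system},'' and what you have done is make that conjugation argument explicit, carefully checking that the diagonal involution $T$ intertwines the cones, the norms, the duals, and every one of the hypotheses (B1)--(B5)$^*$, and that the identity $\langle Bw,w\rangle=\langle Aw^A,w^A\rangle$ follows from $T^*=T=T^{-1}$ and $A=TBT$.
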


\subsection{An Example}
\label{subsection:example}
In the present subsection we give an example showing that the
invariant decomposition provided by a generalized exponential
separation may be finer than the Oseledets decomposition given in
Theorem~\ref{Oseledets-thm}.

\medskip
We start with the two\nobreakdash-\hspace{0pt}dimensional torus,
written as $(0,1] \times (0,1]$.  Choose an irrational number $\rho
\in (0,1)$.  Define
\begin{equation*}
\theta_t(\omega_1,\omega_2) := (\omega_1 + t, \omega_2 + {\rho}t),
\quad t \in \RR, \ (\omega_1, \omega_2) \in (0,1] \times (0,1],
\end{equation*}
where addition is understood modulo $1$.

Let $\Omega$ be the set of those $\omega \in (0,1] \times (0,1]$ for
which $\theta_{t}\omega \ne (1,1)$ for any $t \in \RR$.
$\mathfrak{F}$ equals the family of
Lebesgue\nobreakdash-\hspace{0pt}measurable subsets of $\Omega$, and
$\PP$ is the normalized Lebesgue measure on $\Omega$.  $(\OFP,
(\theta_{t})_{t\in\RR})$ is an ergodic metric flow, with complete
$\PP$.

Define an $(\mathfrak{F},
\mathfrak{B}(\RR))$\nobreakdash-\hspace{0pt}measurable function $a
\colon \Omega \to \RR$,
\begin{equation*}
a(\omega) = a(\omega_1, \omega_2) := - \frac{1}{(\omega_1 +
\omega_2)^{2}}.
\end{equation*}

For $\omega \in \Omega$ put
\begin{equation*}
A(\omega) = \left(
\begin{matrix} a(\omega) & 1 \\
1 & a(\omega)
\end{matrix} \right).
\end{equation*}
The first part of (OP0) is satisfied.  Further, for each $\omega \in
\Omega$ the discontinuity points of the function $[\, \RR \ni t
\mapsto A(\theta_{t}\omega) \in \RR^{2 \times 2} \,]$ are precisely
those $t \in \RR$ at which either $(\theta_{t}\omega)_1 = 1$ or
$(\theta_{t}\omega)_2 = 1$ (but not both, thanks to our choice of
$\Omega$).  At any of such points the function is left- or
right\nobreakdash-\hspace{0pt}continuous, with finite limits.
Therefore its is locally bounded, hence locally integrable, and the
second part of (OP0) is satisfied, too.

(O1) and (O2) are  obvious. It follows from
Propositions~\ref{cooperative-prop1} and~\ref{cooperative-prop2} that
(B1)(i) and (B2) hold.

Observe that for each $\omega \in \Omega$ one has
\begin{equation}
\label{example-formula}
U_{\omega}(t) = \exp\Bigl( \int\limits_{0}^{t} a(\theta_{\tau}\omega)
\, d\tau \Bigr) e^{tB}, \qquad t \in \RR,
\end{equation}
where $B = \begin{pmatrix} 0 & 1 \\ 1 & 0 \end{pmatrix}$,
consequently
\begin{equation*}
U_{\omega}(t) = \exp\Bigl( \int\limits_{0}^{t} a(\theta_{\tau}\omega)
\, d\tau \Bigr) \begin{pmatrix} \cosh{t} & \sinh{t} \\ \sinh{t} &
\cosh{t}
\end{pmatrix}, \qquad t \in \RR.
\end{equation*}
It is straightforward that
\begin{equation*}
(\sinh{1}) \mathbf{e} \le e^{B} \mathbf{e}_{i} \le (\cosh{1})
\mathbf{e}, \quad i = 1, 2,
\end{equation*}
which gives that for each $\omega \in \Omega$ and each $u \in X^{+}
\setminus \{0\}$, $u^{*} \in (X^{*})^{+} \setminus \{0\}$, there are
$\beta(\omega, u) > 0$, $\beta^{*}(\omega, u^{*}) > 0$ such that
\begin{gather*}
\beta(\omega, u) \mathbf{e} \le U_{\omega}(t) u  \le (\coth{1})
\beta(\omega, u) \mathbf{e}, \\
\beta^{*}(\omega, u^{*}) \mathbf{e}^{*} \le U^{*}_{\omega}(t) u^{*}
\le (\coth{1}) \beta^{*}(\omega, u^{*}) \mathbf{e}^{*}.
\end{gather*}
Consequently, (B4) holds with $\varkappa$ and $\varkappa^{*}$
constantly equal to $\coth{1}$.

Due to Eq.~\eqref{example-formula}, for each $\omega \in \Omega$ the
subspace $\tilde{E}_1(\omega)$, provided by
Theorem~\ref{cooperative-thm}(3), equals the invariant subspace of
$B$ corresponding to the principal eigenvalue of $B$, that is,
$\tilde{E}_1(\omega) = \spanned\{(1,1)^{\top}\}$, whereas
$\tilde{F}_1(\omega)$ equals the complementary invariant subspace of
$B$, that is, $\tilde{F}_1(\omega) = \spanned\{(1,-1)^{\top}\}$.
In~particular, $w(\omega) = (1,1)^{\top}/\sqrt{2}$ for any $\omega
\in \Omega$.

We apply \eqref{lambda-eq} to calculate $\tilde{\lambda}_1$.  There
holds $\kappa(\omega) = 1 + a(\omega)$ for each $\omega \in \Omega$.
We estimate
\begin{equation*}
\tilde{\lambda}_1 = \int\limits_{\Omega} (1 + a(\omega)) \,
d\PP(\omega) \le 1 - \int\limits_{0}^{1} d\omega_1 \int\limits_{0}^{1
- \omega_1} \frac{d\omega_2}{(\omega_1 + \omega_2)^{2}} = - \infty.
\end{equation*}
Obviously, $\tilde{\lambda}_2 = \tilde{\lambda}_1 = - \infty$.  To
calculate $\tilde{\sigma}$, observe that, by \eqref{example-formula},
\begin{equation*}
\tilde{\sigma} = - \lim_{t\to\infty} \frac{1}{t}
\ln{\frac{\norm{U_{\omega}(t)|_{\tilde{F}_1(\omega)}}}
{\norm{U_{\omega}(t) w(\omega)}}} = - \lim_{t\to\infty} \frac{1}{t}
\ln{\frac{\norm{e^{tB}|_{\spanned\{(1,-1)^{\top}\}}}}
{\norm{e^{tB}|_{\spanned\{(1,1)^{\top}\}}}}} = 2 \quad \forall \omega
\in \Omega.
\end{equation*}

Notice that Case (i) in Theorem~\ref{Oseledets-thm} is satisfied and
no invariant families $\{E_1(\omega)\}$, $\{\hat{F}_1(\omega)\}$, can
be defined in terms of exponential rates of convergence.

\section*{Acknowledgments}
The authors thank the referees for their helpful suggestions.

\end{document}